\newtheorem{theorem}{Theorem}[section]
\newtheorem{corollary}[theorem]{Corollary}
\newtheorem{definition}[theorem]{Definition}
\newtheorem{example}[theorem]{Example}
\newtheorem{lemma}[theorem]{Lemma}
\newtheorem{notation}[theorem]{Notation}
\newtheorem{remark}[theorem]{Remark}
\newenvironment{proof}[1][Proof]{\noindent\textbf{#1.} }{\ \rule{0.5em}{0.5em}}
\begin{document}

\title{On It\^{o} differential equations in rough path theory}
\author{Terry J. Lyons\thanks{%
University of Oxford and Oxford-Man Institute, Email:
terry.lyons@maths.ox.ac.uk} \ \ \ \ \ Danyu Yang\thanks{%
University of Oxford and Oxford-Man Institute, Email: yangd@maths.ox.ac.uk}}
\maketitle

\begin{abstract}
The solution of rough differential equation, driven by the It\^{o} signature
of \ a continuous local martingale, exists uniquely a.s. when the vector
field is $Lip\left( \beta \right) $ for $\beta >1$, and coincides a.s. with
the It\^{o} signature of the solution of parallel stochastic differential
equation. Moreover, the It\^{o} solution can be recovered pathwisely by
concatenating discounted Stratonovich solutions.
\end{abstract}

\section{Introduction}

It\^{o} calculus \cite{Ito1}, \cite{Ito2} is a transformation between
martingales, and lies at the bottom of various mathematical models. However,
It\^{o} calculus is not pathwise and lacks stability. The former
disadvantage is due to the fact that It\^{o} calculus respects the
probabilistic structure, and problem will occur if one tries to solve
differential equation driven by a selected sample path. On the other hand,
as demonstrated by Wong and Zakai \cite{Wong Zakai}, the solutions of
ordinary differential equations, driven by piecewise-linear approximations
of Brownian motion, converge uniformly in probability to the Stratonovich
solution as the mesh of partitions tends to zero. As a result, the solution
of It\^{o} stochastic differential equation is not stable with respect to
perturbation of the driving signal, even when the perturbation is very
natural.

There have been sustained interests in developing pathwise It\^{o} calculus.
Bichteler \cite{Bichteler}, by using factorization of operators, proved
that, the It\^{o} integral can be defined pathwisely outside a null set
depending on the integrand function. Based on Bichteler's approach,
Karandikar \cite{Karandikar} got similar result by using random time change.
F\"{o}llmer \cite{Follmer} proposed a deterministic approach to integrate
closed one-forms. He proved that, for a semi-martingale $X$, if the
quadratic variation of $X$ converge pathwisely (along a sequence of finite
partitions), then the It\^{o} Riemann sums of $\int f\left( X\right) dX$
also converge pathwisely (along the same sequence of finite partitions).
Russo and Vallois \cite{Russo and Vallois} developed an almost pathwise
approach by regularizing integrals, but their method is not truly pathwise
because their convergence is in probability.

Rough path \cite{Terry98}, \cite{Terrysnotes}, \cite{TerryQian}, \cite{Peter
Friz} is close in spirit to F\"{o}llmer's approach \cite{Follmer}, but a far
more systematic methodology which is stable under a large class of
approximations, and applies not only to semi-martingales but\ also to much
wider classes of processes \cite{TerryStoica}, \cite{CoutinQian}, \cite%
{FrizVicoirGuassian}, \cite{HairerWeber}, \cite{TerryYang} etc.. As a
natural generalization of classical calculus, rough path is essentially
pathwise, and can provide a stable solution which is continuous with respect
to the driving signal (in rough path metric).

However, there is some innate non-geometric property of the It\^{o} integral
which impedes a direct application of rough path theory. The set of
(geometric) rough paths is defined as the closure of continuous bounded
variation paths in rough path metric. Thus, Stratonovich integral, as the
limit of piecewise-linear approximations, is very natural in rough path. On
the other hand, It\^{o} integral generally is not the limit of continuous
bounded variation paths. Indeed, for $2$-dimensional Brownian motion $%
B_{t}=B_{t}^{1}e_{1}+B_{t}^{2}e_{2}$, $t\in \left[ 0,1\right] $, there does 
\textit{not} exist a sequence of continuous bounded variation paths $%
B_{t}^{n}=B_{t}^{1,n}e_{1}+B_{t}^{2,n}e_{2}$, $t\in \left[ 0,1\right] $, $%
n\geq 1$, such that $B_{1}^{n}$ converge to $B_{1}$ (assuming $%
B_{0}^{n}=B_{0}=0$) and $\int_{0}^{1}B_{u}^{n}\otimes dB_{u}^{n}$ ($\otimes $
is the tensor product) converge to the It\^{o} integral $\int_{0}^{1}B_{u}%
\otimes dB_{u}$ in probability as $n\rightarrow \infty $. The reason is
that, for any $B^{n}$, the Riemann-Stieltjes integral satisfies, 
\begin{eqnarray*}
&&\int_{0}^{1}B_{u}^{n}\otimes dB_{u}^{n}-\frac{1}{2}\left( B_{1}^{n}\right)
^{\otimes 2} \\
&=&\int_{0}^{1}\left( B_{u}^{1,n}e_{1}+B_{u}^{2,n}e_{2}\right) \otimes
d\left( B_{u}^{1,n}e_{1}+B_{u}^{2,n}e_{2}\right) -\frac{1}{2}\left(
B^{1,n}e_{1}+B^{2,n}e_{2}\right) ^{\otimes 2} \\
&=&\frac{1}{2}\left(
\int_{0}^{1}B_{u}^{1,n}dB_{u}^{2,n}-\int_{0}^{1}B_{u}^{2,n}dB_{u}^{1,n}%
\right) \left( e_{1}\otimes e_{2}-e_{2}\otimes e_{1}\right) \text{.}
\end{eqnarray*}%
While the It\^{o} integral satisfies,%
\begin{eqnarray*}
&&\int_{0}^{1}B_{u}\otimes dB_{u}-\frac{1}{2}\left( B_{1}\right) ^{\otimes 2}
\\
&=&\frac{1}{2}\left(
\int_{0}^{1}B_{u}^{1}dB_{u}^{2}-\int_{0}^{1}B_{u}^{2}dB_{u}^{1}\right)
\left( e_{1}\otimes e_{2}-e_{2}\otimes e_{1}\right) +\frac{1}{2}\left(
e_{1}\otimes e_{1}+e_{2}\otimes e_{2}\right) \text{.}
\end{eqnarray*}%
Thus, there is a non-negligible symmetric part $2^{-1}\left( e_{1}\otimes
e_{1}+e_{2}\otimes e_{2}\right) $, which can not be approximated by any
sequence of $\left\{ B^{n}\right\} _{n}$. To put it in a more abstract way,
it is because that, the It\^{o} signature of Brownian motion, as will be
defined afterwards, is not a geometric rough process, i.e. it does not take
value in the nilpotent Lie group where the normal rough paths take value.

Since Stratonovich integral is well-defined in rough path, one possible
approach to defining It\^{o} integral in rough path is to define the It\^{o}
integral as Stratonovich integral plus a drift. Since the drift is generally
regular, the cross integrals between the continuous semi-martingale and the
drift are well-defined pathwisely as Young integrals \cite{Young L C}. This
idea is adopted in Lyons and Qian \cite{TerryQianpaper}. In Lejay and
Victoir \cite{Lejay and Victoir}, they interpret a $p$-rough path, $p\in
\lbrack 2,3)$, as the product of a weakly geometric $p$-rough path and
another continuous path with finite $2^{-1}p$-variation. Similar idea is
used in Friz and Victoir \cite{Peter Friz}, where they combine a $p$%
-geometric rough path with a continuous path with finite $q$-variation for $%
p^{-1}+q^{-1}>1$, and get very concrete estimates of solution of rough
differential equations driven by $\left( p,q\right) $-rough paths. In the
more recent \cite{Hairer and Kelly}, by using similar approach as in \cite%
{Gubinelli}, the authors embed a non-geometric rough path in a geometric
rough path, extend the result in \cite{Lejay and Victoir}. In this
manuscript, we will not try to define rough differential equation driven by $%
p$-rough paths, because there is a canonical choice when $p\in \lbrack 2,3)$%
. We interpret a $p$-rough path when $p\in \lbrack 2,3)$ as a $\left(
p,2^{-1}p\right) $-rough path, and focus on interpreting the It\^{o}
solution in rough path.

By It\^{o} solution, we mean the solution of a rough differential equation
driven by the It\^{o} signature (of a $d$-dimensional continuous local
martingale $Z$):%
\begin{equation*}
\mathcal{I}_{2}\left( Z\right) _{t}=\left( 1,Z_{t}-Z_{0},\int_{0}^{t}\left(
Z_{u}-Z_{0}\right) \otimes dZ_{u}\right) \text{, \ }t\geq 0\text{,}
\end{equation*}%
comparing with the Stratonovich solution driven by the Stratonovich signature%
\begin{equation*}
S_{2}\left( Z\right) _{t}=\left( 1,Z_{t}-Z_{0},\int_{0}^{t}\left(
Z_{u}-Z_{0}\right) \otimes \circ dZ_{u}\right) \text{, \ }t\geq 0\text{.}
\end{equation*}

We demonstrate that, when the vector field is $Lip\left( \beta \right) $ for 
$\beta >1$, the solution of rough differential equation, driven by the It%
\^{o} signature of a $d$-dimensional continuous local martingale, exists
uniquely a.s., and coincides a.s. with the It\^{o} signature of the
classical It\^{o} solution. As a consequence, It\^{o} differential equation
in rough path is a transformation between group-valued continuous local
martingales (i.e. the It\^{o} signatures), with the first level of its
solution coincides almost surely with the solution of classical stochastic
differential equation. We also get a pathwise It\^{o}'s lemma, which
decomposes the Stratonovich signature as the sum of two rough paths: one is
a group-valued continuous local martingale and the other is constructed from
continuous bounded variation paths.

Moreover, with concrete convergent result, we want to convey the viewpoint
that\ the It\^{o} solution can be recovered pathwisely by concatenating
discounted Stratonovich solutions. We demonstrate that, the It\^{o} solution
takes into consideration a (possible) noise, and when the underlying driving
signal is polluted with the noise, the expectation of the It\^{o} solution
coincides with Stratonovich solution. More specifically, the It\^{o}
solution can be obtained by the following method: for each possible
trajectory of the underlying, we choose a noise, discount the Stratonovich
solution to balance the growth caused by the presence of the noise,
concatenate the discounted Stratonovich solutions, and let the mesh of
partitions tends to zero. As we demonstrate, this process is applicable to
general vector fields, and the concatenated Stratonovich solutions converge
uniformly to the It\^{o} solution as the mesh of partitions tends to zero.

\section{Definitions and Notations}

\begin{notation}[$T^{\left( n\right) }\left( 
\mathbb{R}
^{d}\right) ,\left\Vert \cdot \right\Vert $]
\label{Notation tensor group}For integer $n\geq 1$, we denote $T^{\left(
n\right) }\left( 
\mathbb{R}
^{d}\right) :=1\oplus 
\mathbb{R}
^{d}\oplus \cdots \oplus \left( 
\mathbb{R}
^{d}\right) ^{\otimes n}$, and denote $\pi _{k}$ as the projection of $%
T^{\left( n\right) }\left( 
\mathbb{R}
^{d}\right) $ to $\left( 
\mathbb{R}
^{d}\right) ^{\otimes k}$, $k=0,1,\dots ,n$. We equip $T^{\left( n\right)
}\left( 
\mathbb{R}
^{d}\right) $ with the norm\footnote{%
It is not sub-additive, but is equivalent to another sub-additive norm (Exer
7.38 \cite{Peter Friz}).}%
\begin{equation}
\left\Vert g\right\Vert :=\sum_{k=1}^{n}\left\vert \pi _{k}\left( g\right)
\right\vert ^{\frac{1}{k}}\text{, }\forall g\in T^{\left( n\right) }\left( 
\mathbb{R}
^{d}\right) \text{.}  \label{definition of homogeneous norm}
\end{equation}%
Define product and inverse for $g,h\in T^{\left( n\right) }\left( 
\mathbb{R}
^{d}\right) $ as%
\begin{eqnarray*}
g\otimes h &:&=\left( 1,\pi _{1}\left( g\right) +\pi _{1}\left( h\right)
,\dots ,\sum_{k=0}^{n}\pi _{k}\left( g\right) \otimes \pi _{n-k}\left(
h\right) \right) \text{,} \\
g^{-1} &:&=\left( 1,-\pi _{1}\left( g\right) ,\dots ,\sum_{k_{1}+\cdots
+k_{j}=n,1\leq k_{i}\leq n}\left( -1\right) ^{j}\pi _{k_{1}}\left( g\right)
\otimes \cdots \otimes \pi _{k_{j}}\left( g\right) \right) \text{.}
\end{eqnarray*}%
Then $\left( T^{\left( n\right) }\left( 
\mathbb{R}
^{d}\right) ,\left\Vert \cdot \right\Vert \right) $ is a free nilpotent
topological group with identity $\left( 1,0,\dots ,0\right) $.
\end{notation}

\begin{definition}[$p$-variation]
Suppose $\gamma $ is a continuous path defined on $\left[ 0,T\right] $
taking value in topological group $\left( G,\left\Vert \cdot \right\Vert
\right) $. For $1\leq p<\infty $, define the $p$-variation of $\gamma $ as%
\begin{equation}
\left\Vert \gamma \right\Vert _{p-var,\left[ 0,T\right] }:=\left(
\sup_{D\subset \left[ 0,T\right] }\sum_{k,t_{k}\in D}\left\Vert \gamma
_{t_{k}}^{-1}\gamma _{t_{k+1}}\right\Vert ^{p}\right) ^{\frac{1}{p}}\text{,}
\label{Definition of p-variation}
\end{equation}%
where we take supremum over all finite partitions $D=\left\{ t_{k}\right\}
_{k=0}^{n}$ satisfying $0=t_{0}<\cdots <t_{n}=T$.

When $p=\infty $, denote $\left\Vert \gamma \right\Vert _{\infty -var,\left[
0,T\right] }:=\sup_{0\leq s\leq t\leq T}\left\Vert \gamma _{s}^{-1}\gamma
_{t}\right\Vert $.
\end{definition}

The definition of $p$-variation at $\left( \ref{Definition of p-variation}%
\right) $ applies to continuous path taking value in Banach spaces, with $%
\gamma _{t_{k}}^{-1}\gamma _{t_{k+1}}$ replaced by $\gamma _{t_{k+1}}-\gamma
_{t_{k}}$.

\begin{definition}[$p$-rough path]
Suppose $\gamma $ is a continuous path defined on $\left[ 0,T\right] $
taking value in $\left( T^{\left( 2\right) }\left( 
\mathbb{R}
^{d}\right) ,\left\Vert \cdot \right\Vert \right) $. We say $\gamma $ is a $%
p $-rough path for some $p\in \lbrack 2,3)$ if $\left\Vert \gamma
\right\Vert _{p-var,\left[ 0,T\right] }<\infty $.
\end{definition}

\begin{definition}[$p$-rough process]
Process $X$ on $\left[ 0,T\right] $ is said to be a $p$-rough process for
some $p\in \lbrack 2,3)$, if $X\left( \omega \right) $ is a $p$-rough path
for almost every $\omega $.
\end{definition}

The Theorem below follows from Thm 3.7 in \cite{Terrysnotes}.

\begin{theorem}[Lyons]
\label{Theorem of enhancement}Suppose $\gamma $ is a $p$-rough path for $%
p\in \lbrack 2,3)$. Then for any integer $n\geq 3$, there exists a unique
continuous path $\widetilde{\gamma }$ taking value in $\left( T^{\left(
n\right) }\left( 
\mathbb{R}
^{d}\right) ,\left\Vert \cdot \right\Vert \right) $ satisfying $\pi _{k}(%
\widetilde{\gamma })=\pi _{k}\left( \gamma \right) $, $k=1,2$, and $%
\left\Vert \widetilde{\gamma }\right\Vert _{p-var,\left[ 0,T\right] }<\infty 
$.
\end{theorem}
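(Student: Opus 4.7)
The plan is to prove this by Lyons' extension argument, working with two-parameter increments and Chen's relation. I first pass to $\gamma_{s,t} := \gamma_s^{-1} \otimes \gamma_t \in T^{(2)}(\mathbb{R}^d)$, which inherits the multiplicative property $\gamma_{s,u} \otimes \gamma_{u,t} = \gamma_{s,t}$ from associativity in the group, and introduce the super-additive control $\omega(s,t) := \|\gamma\|_{p\text{-var},[s,t]}^{p}$, so that $|\pi_k(\gamma_{s,t})| \leq \omega(s,t)^{k/p}$ for $k=1,2$. The task reduces to producing a multiplicative extension $\widetilde{\gamma}_{\cdot,\cdot}$ taking values in $T^{(n)}(\mathbb{R}^d)$ with analogous bounds at levels $3,\dots,n$, after which one sets $\widetilde{\gamma}_t := \gamma_0 \otimes \widetilde{\gamma}_{0,t}$. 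The identification $\pi_k(\widetilde{\gamma}) = \pi_k(\gamma)$ for $k=1,2$ is built into the construction.

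For existence I would construct $\widetilde{\gamma}_{s,t}$ as a limit of refined tensor products. Define the naive candidate $\hat{\gamma}_{s,t} \in T^{(n)}(\mathbb{R}^d)$ by $\pi_k(\hat{\gamma}_{s,t}) = \pi_k(\gamma_{s,t})$ for $k=1,2$ and $\pi_k(\hat{\gamma}_{s,t}) = 0$ for $3 \leq k \leq n$. A direct expansion in the tensor algebra, using Chen's relation at levels $1$ and $2$, shows that for any $s<u<t$ the defect $\hat{\gamma}_{s,u} \otimes \hat{\gamma}_{u,t} - \hat{\gamma}_{s,t}$ vanishes at levels $1$ and $2$ and at each level $k \in \{3,\dots,n\}$ is a polynomial in the lower components of size at most $C_k\,\omega(s,t)^{k/p}$, i.e.\ $\hat{\gamma}$ is \emph{almost multiplicative} with exponent $k/p > 1$ at every extended level. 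For a finite partition $D = \{s = t_0 < \cdots < t_N = t\}$ set $\hat{\gamma}^{D}_{s,t} := \hat{\gamma}_{t_0,t_1} \otimes \cdots \otimes \hat{\gamma}_{t_{N-1},t_N}$; inserting one extra point into $D$ changes the result at level $k$ by a term of order $\omega(s,t)^{k/p}$ carrying an additional factor that is summable over successive dyadic refinements precisely because $k/p > 1$ for every $k \geq 3$. Passing to the limit produces a $T^{(n)}$-valued increment $\widetilde{\gamma}_{s,t}$ that is genuinely multiplicative and inherits the uniform bound $|\pi_k(\widetilde{\gamma}_{s,t})| \leq C_k\,\omega(s,t)^{k/p}$ for every $k \leq n$. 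Summing these bounds along arbitrary partitions of $[0,T]$ yields $\|\widetilde{\gamma}\|_{p\text{-var},[0,T]} < \infty$.

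For uniqueness, suppose $\bar{\gamma}$ is a second extension satisfying the hypotheses. I induct on $k$ from $3$ to $n$, assuming $\pi_j(\widetilde{\gamma}) = \pi_j(\bar{\gamma})$ for all $j < k$. Writing Chen's identity for both extensions at level $k$ and subtracting, all mixed tensor terms cancel because the lower projections agree, leaving the purely additive relation $\delta^k_{s,t} = \delta^k_{s,u} + \delta^k_{u,t}$, where $\delta^k := \pi_k(\widetilde{\gamma}) - \pi_k(\bar{\gamma})$. Since both extensions have finite $p$-variation, each level-$k$ component has finite $(p/k)$-variation; but $p/k < 1$ (as $p<3\leq k$), and a continuous additive two-parameter functional with finite sub-linear variation vanishes identically, closing the induction.

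The main obstacle I expect is the quantitative refinement estimate in the second paragraph: one must control, levelwise in the truncated tensor algebra of depth $n$, how $\hat{\gamma}^{D}_{s,t}$ changes under insertion of a single midpoint, and then sum those corrections over all refinements of all partitions of $[0,T]$. This is the heart of the Lyons lift and rests on a neo-classical style estimate in $T^{(n)}(\mathbb{R}^d)$ together with careful binomial bookkeeping of the cross terms produced by tensor multiplication; the summability of the corrections is precisely what forces the hypothesis $p < 3$, since the first extended level $k = 3$ must satisfy $k/p > 1$.
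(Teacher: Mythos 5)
Your proposal is correct and is essentially the standard Lyons extension argument. The paper itself does not prove this statement but cites it from Thm 3.7 of Lyons--Caruana--L\'evy, and your sketch reproduces that result's content faithfully: the two-parameter increment, the control $\omega$, the almost-multiplicative candidate, the partition-product limit, and the Chen-plus-subadditivity uniqueness argument (an additive continuous functional with finite $(p/k)$-variation, $p/k<1$, must vanish). One small difference of emphasis: the cited reference extends one level at a time, using the already-constructed level-$m$ functional as the genuine multiplicative base when building level $m+1$, whereas you pad levels $3,\dots,n$ with zeros and run the refinement argument in $T^{(n)}$ all at once --- this is closer in spirit to Lyons's original 1998 almost-multiplicative-functional theorem. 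Both routes work; the level-by-level version keeps the defect exponent uniformly equal to $(m+1)/p$ at each step and avoids tracking how corrections at level $3$ feed into levels $4,\dots,n$ through the tensor convolution, which is the bookkeeping you correctly flag as the ``main obstacle.'' Two minor imprecisions worth fixing in a full write-up: the change from inserting a single point into $D$ is governed by the local quantity $\omega(t_i,t_{i+1})^{3/p}$ rather than by $\omega(s,t)^{k/p}$, and Lyons's maximal-point-removal argument (not dyadic refinement per se) is what converts the local estimate into a uniform bound $C\zeta(3/p)\omega(s,t)^{3/p}$ on the total correction; also, the uniqueness asserted in the theorem statement is really uniqueness of the two-parameter increments, since the starting value $\widetilde{\gamma}_0$ is constrained only at levels $1,2$.
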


\begin{notation}[$S_{n}\left( \protect\gamma \right) $]
\label{Notation Sn(X)}Suppose $\gamma $ is a $p$-rough path for $p\in
\lbrack 2,3)$. For $n\geq 3$, denote $S_{n}\left( \gamma \right) $ as the
enhancement of $\gamma $ to the path taking value in $T^{\left( n\right)
}\left( 
\mathbb{R}
^{d}\right) $ as in Theorem \ref{Theorem of enhancement}. For $n=1,2$,
denote $S_{1}\left( \gamma \right) :=\left( 1,\pi _{1}\left( \gamma \right)
\right) $ and $S_{2}\left( \gamma \right) :=\gamma $.
\end{notation}

Based on \cite{Lejay and Victoir}, any $p$-rough path, $p\in \lbrack 2,3)$,
can be interpreted as the product of a weak geometric $p$-rough path and
another continuous path with finite $2^{-1}p$-variation. We will use this
equivalence and define solution of rough differential equations driven by $p$%
-rough paths, $p\in \lbrack 2,3)$, in the sense of $\left( p,2^{-1}p\right) $%
-rough path in \cite{Peter Friz}.

\begin{notation}
\label{Notation X1 and X2}Suppose $\gamma :\left[ 0,T\right] \rightarrow
\left( T^{\left( 2\right) }\left( 
\mathbb{R}
^{d}\right) ,\left\Vert \cdot \right\Vert \right) $ is a $p$-rough path for
some $p\in \lbrack 2,3)$. Then we denote $\gamma :=\left( \gamma ^{1},\gamma
^{2}\right) $ with $\gamma ^{1}:\left[ 0,T\right] \rightarrow \left(
T^{\left( 2\right) }\left( 
\mathbb{R}
^{d}\right) ,\left\Vert \cdot \right\Vert \right) $ and $\gamma ^{2}:\left[
0,T\right] \rightarrow \left( 
\mathbb{R}
^{d}\right) ^{\otimes 2}$ continuous paths defined as%
\begin{eqnarray*}
\gamma _{t}^{1} &:&=1+\pi _{1}\left( \gamma _{t}\right) +\text{Anti}\left(
\pi _{2}\left( \gamma _{t}\right) \right) +\frac{1}{2}\left( \pi _{1}\left(
\gamma _{t}\right) \right) ^{\otimes 2}\text{, \ }t\in \left[ 0,T\right] 
\text{,} \\
\gamma _{t}^{2} &:&=\text{Sym}\left( \pi _{2}\left( \gamma _{t}\right)
\right) -\frac{1}{2}\left( \pi _{1}\left( \gamma _{t}\right) \right)
^{\otimes 2}\text{, \ }t\in \left[ 0,T\right] \text{,}
\end{eqnarray*}%
where Anti$\left( \cdot \right) $ denotes the projection of $\left( 
\mathbb{R}
^{d}\right) ^{\otimes 2}$ to $span\!\left\{ e_{i}\otimes e_{j}-e_{j}\otimes
e_{i}\right\} _{1\leq i<j\leq d}$ and Sym$\left( \cdot \right) $ denotes the
projection of $\left( 
\mathbb{R}
^{d}\right) ^{\otimes 2}$ to $span\!\left\{ e_{i}\otimes e_{j}+e_{j}\otimes
e_{i}\right\} _{1\leq i\leq j\leq d}$.
\end{notation}

Denote $L\left( 
\mathbb{R}
^{d},%
\mathbb{R}
^{e}\right) $ as the set of linear mappings from $%
\mathbb{R}
^{d}$ to $%
\mathbb{R}
^{e}$.

\begin{definition}[$Lip\left( \protect\beta \right) $ vector field]
$f:%
\mathbb{R}
^{e}\rightarrow L\left( 
\mathbb{R}
^{d},%
\mathbb{R}
^{e}\right) $ is said to be $Lip\left( \beta \right) $ for $\beta >0$, if $f$
is $\lfloor \beta \rfloor $-times Fr\'{e}chet differentiable ($\lfloor \beta
\rfloor $ is the largest integer which is strictly less than $\beta $) and%
\begin{equation*}
\left\vert f\right\vert _{Lip\left( \beta \right) }:=\max_{k=0,\dots
,\lfloor \beta \rfloor }\left\Vert D^{k}f\right\Vert _{\infty }\vee
\left\Vert D^{\lfloor \beta \rfloor }f\right\Vert _{\left( \beta -\lfloor
\beta \rfloor \right) -H\ddot{o}l}<\infty \text{,}
\end{equation*}%
where $\left\Vert \cdot \right\Vert _{\infty }$ denotes the uniform norm and 
$\left\Vert \cdot \right\Vert _{\left( \beta -\lfloor \beta \rfloor \right)
-H\ddot{o}l}$ denotes the $\left( \beta -\lfloor \beta \rfloor \right) $-H%
\"{o}lder norm.
\end{definition}

\begin{definition}[signature]
Suppose $x:\left[ 0,T\right] \rightarrow 
\mathbb{R}
^{d}$ is a continuous bounded variation path. For integer $n\geq 1$, we
define the signature of $x$: $s_{n}\left( x\right) :\left[ 0,T\right]
\rightarrow \left( T^{\left( n\right) }\left( 
\mathbb{R}
^{d}\right) ,\left\Vert \cdot \right\Vert \right) $ as%
\begin{equation}
s_{n}\left( x\right) _{t}:=\left(
1,x_{t}-x_{0},\iint_{0<u_{1}<u_{2}<t}dx_{u_{1}}\otimes dx_{u_{2}},\dots
,\idotsint\nolimits_{0<u_{1}<\cdots <u_{n}<t}dx_{u_{1}}\otimes \cdots
\otimes dx_{u_{n}}\right) \text{.}  \label{Definition of signature}
\end{equation}
\end{definition}

Then based on Definition 12.2 in \cite{Peter Friz}, we define solution of
rough differential equation driven by $p$-rough path for $p\in \lbrack 2,3)$%
. ($C^{1-var}\left( \left[ 0,T\right] ,%
\mathbb{R}
^{d}\right) $ denotes the set of continuous bounded variation paths defined
on $\left[ 0,T\right] $ taking value in $%
\mathbb{R}
^{d}$).

\begin{definition}[solution of RDE]
\label{Definition solution of RDE}Suppose $\gamma :\left[ 0,T\right]
\rightarrow \left( T^{\left( 2\right) }\left( 
\mathbb{R}
^{d}\right) ,\left\Vert \cdot \right\Vert \right) \ $is a $p$-rough path for
some $p\in \lbrack 2,3)$ with the decomposition$\ \gamma =\left( \gamma
^{1},\gamma ^{2}\right) $ (Notation \ref{Notation X1 and X2}), and $f:%
\mathbb{R}
^{e}\rightarrow L\left( 
\mathbb{R}
^{d},%
\mathbb{R}
^{e}\right) $ is $Lip\left( \beta \right) $ for some $\beta \geq 1$. Then
continuous path $Y:\left[ 0,T\right] \rightarrow \left( T^{\left( n\right) }(%
\mathcal{%
\mathbb{R}
}^{e}),\left\Vert \cdot \right\Vert \right) $ is said to be a solution of
the rough differential equation%
\begin{equation}
dY=f\left( \pi _{1}\left( Y\right) \right) d\gamma \text{, }Y_{0}=\mathbf{%
\xi }\in T^{\left( n\right) }\left( 
\mathbb{R}
^{e}\right) \text{,}  \label{Definition of RDE}
\end{equation}%
if there exists a sequence $\left\{ \left( \gamma ^{1,m},\gamma
^{2,m}\right) \right\} _{m}$ in $C^{1-var}\left( \left[ 0,T\right] ,%
\mathbb{R}
^{d}\right) \times C^{1-var}\left( \left[ 0,T\right] ,\left( 
\mathbb{R}
^{d}\right) ^{\otimes 2}\right) \ $satisfying (with $s_{2}\left( \cdot
\right) $ defined at $\left( \ref{Definition of signature}\right) $, $%
s_{2}\left( \cdot \right) _{s,t}:=s_{2}\left( \cdot \right) _{s}^{-1}\otimes
s_{2}\left( \cdot \right) _{t}$)%
\begin{gather*}
\sup_{m}\left( \left\Vert \gamma ^{1,m}\right\Vert _{p-var,\left[ 0,T\right]
}+\left\Vert \gamma ^{2,m}\right\Vert _{\frac{p}{2}-var,\left[ 0,T\right]
}\right) <\infty \text{,} \\
\lim_{m\rightarrow \infty }\max_{k=1,2}\sup_{0\leq s\leq t\leq T}\left\vert
\pi _{k}\left( s_{2}\left( \gamma ^{1,m}\right) _{s,t}\right) -\pi
_{k}\left( \gamma _{0,t}^{1}\right) \right\vert =0\text{, }%
\lim_{m\rightarrow \infty }\sup_{0\leq s\leq t\leq T}\left\vert \left(
\gamma _{t}^{2,m}-\gamma _{s}^{2,m}\right) -\left( \gamma _{t}^{2}-\gamma
_{s}^{2}\right) \right\vert =0\text{,}
\end{gather*}%
such that the signature of the solution of the ordinary differential
equations%
\begin{equation*}
dy^{m}=f\left( y^{m}\right) d\gamma ^{1,m}+\left( Dff\right) \left(
y^{m}\right) d\gamma ^{2,m}\text{, }y_{0}^{m}=\pi _{1}\left( \mathbf{\xi }%
\right) \in 
\mathbb{R}
^{e}\text{,}
\end{equation*}%
converge to $Y$ uniformly, i.e.%
\begin{equation*}
\lim_{m\rightarrow \infty }\max_{1\leq k\leq n}\sup_{t\in \left[ 0,T\right]
}\left\vert \pi _{k}\left( \mathbf{\xi }\otimes s_{n}\left( y^{m}\right)
_{0,t}\right) -\pi _{k}\left( Y_{t}\right) \right\vert =0\text{.}
\end{equation*}
\end{definition}

\begin{definition}[$d_{p}$ metric]
\label{Definition of dp metric}Suppose $\gamma ,\widetilde{\gamma }:\left[
0,T\right] \rightarrow \left( T^{\left( 2\right) }\left( 
\mathbb{R}
^{d}\right) ,\left\Vert \cdot \right\Vert \right) $ are two $p$-rough paths
for some $p\in \lbrack 2,3)$. We define ($\gamma _{s,t}:=\gamma
_{s}^{-1}\otimes \gamma _{t}$)%
\begin{equation*}
d_{p-var,\left[ 0,T\right] }\left( \gamma ,\widetilde{\gamma }\right)
:=\max_{k=1,2}\sup_{D\subset \left[ 0,T\right] }\left( \sum_{j,t_{j}\in
D}\left\vert \pi _{k}\left( \gamma _{t_{j},t_{j+1}}\right) -\pi _{k}\left( 
\widetilde{\gamma }_{t_{j},t_{j+1}}\right) \right\vert ^{\frac{p}{k}}\right)
^{\frac{1}{p}}\text{.}
\end{equation*}
\end{definition}

Based on Thm 12.6, Thm 12.10 and Prop 8.7 in \cite{Peter Friz}, we have

\begin{theorem}[existence, uniqueness and continuity]
\label{Theorem existence and uniqueness}The solution of $\left( \ref%
{Definition of RDE}\right) $ exists when $f$ is $Lip\left( \beta \right) $
for $\beta >p-1$, and is unique when $\beta >p$. When $\beta >p$, the
solution is continuous w.r.t. the driving rough path in $d_{p}$-metric.
\end{theorem}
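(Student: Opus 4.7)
The strategy is to reduce everything to the corresponding $(p,q)$-rough path results of Friz--Victoir with $q = p/2$, via the canonical decomposition $\gamma = (\gamma^{1},\gamma^{2})$ introduced in Notation~\ref{Notation X1 and X2}. The papers \cite{Lejay and Victoir} and \cite{Peter Friz} have already done all the serious analytic work; my task is to set up the translation cleanly.

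First I would verify that this decomposition genuinely realizes $\gamma$ as a $(p,p/2)$-rough path in the Friz--Victoir sense. The level-$2$ component of $\gamma^{1}$ is $\mathrm{Anti}(\pi_{2}(\gamma))+\tfrac{1}{2}\pi_{1}(\gamma)^{\otimes 2}$, whose symmetric part is exactly $\tfrac{1}{2}\pi_{1}(\gamma)^{\otimes 2}$, so $\gamma^{1}$ takes values in the step-$2$ nilpotent Lie group $G^{(2)}(\mathbb{R}^{d})$; one checks the obvious identity $\gamma^{1}_{t} \otimes (1,0,\gamma^{2}_{t}) = \gamma_{t}$. Finite $p$-variation of $\gamma^{1}$ and finite $p/2$-variation of $\gamma^{2}$ then follow from finite $p$-variation of $\gamma$ together with the standard estimate $|\pi_{1}(\gamma_{s,t})|^{2} \leq \|\gamma\|_{p-var,[s,t]}^{2}$, which automatically has finite $p/2$-variation. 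The hybrid ODE $dy^{m} = f(y^{m})d\gamma^{1,m} + (Dff)(y^{m})d\gamma^{2,m}$ appearing in Definition~\ref{Definition solution of RDE} is then precisely the form used in \cite{Peter Friz} to define solutions driven by $(p,p/2)$-rough paths.

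Once the interpretation is in place, existence for $f \in Lip(\beta)$ with $\beta > p-1$ follows from Thm 12.6, uniqueness for $\beta > p$ from Thm 12.10, and continuity in $d_{p}$-metric from Thm 12.10 combined with Prop 8.7 (the latter controlling the $p/2$-variation of $\gamma^{2}$ from the $p$-variation information on $\gamma$, and transferring convergence in $d_{p-var}$ of $\gamma^{(n)} \to \gamma$ into the joint convergence $\gamma^{1,(n)} \to \gamma^{1}$ in $d_{p-var}$ on $G^{(2)}(\mathbb{R}^{d})$ and $\gamma^{2,(n)} \to \gamma^{2}$ in $p/2$-variation, by interpolating uniform convergence against uniformly bounded variation). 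The requirement $\beta > p-1$ matches the Friz--Victoir threshold because $Dff \in Lip(\beta-1)$ is precisely what is needed to integrate against the drift $\gamma^{2}$ of finite $p/2$-variation in $[1,3/2)$ as a Young integral, while the rough part still demands $\beta > p-1$.

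The main obstacle, as I see it, is not analytic but bookkeeping: one must check that the class of smooth approximations admitted in Definition~\ref{Definition solution of RDE} --- uniform convergence of $\pi_{1}$ and $\pi_{2}$ of $s_{2}(\gamma^{1,m})$ and of $\gamma^{2,m}$ under uniform $p$- and $p/2$-variation bounds --- is exactly rich enough to trigger the Friz--Victoir continuity machinery (which typically argues by interpolating uniform convergence against the uniform variation bound). Granting this compatibility, the theorem is a verbatim application of the cited results; any subtlety would come from confirming that the approximants $(\gamma^{1,m},\gamma^{2,m})$ produce, via the Lie-group structure, admissible $(p,p/2)$-approximations of $\gamma$ in the sense of \cite{Peter Friz}, but this is a routine identification via the map $(a,b) \mapsto a \otimes (1,0,b)$.
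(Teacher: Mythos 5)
Your proposal matches the paper's route exactly: the paper states this theorem as an immediate consequence of Thm 12.6, Thm 12.10, and Prop 8.7 of Friz--Victoir, after invoking the Lejay--Victoir identification of a $p$-rough path with a $(p,p/2)$-rough path via the very decomposition $\gamma=(\gamma^{1},\gamma^{2})$ of Notation~\ref{Notation X1 and X2}. Your expansion of the bookkeeping --- verifying $\gamma^{1}_{t}\otimes(1,0,\gamma^{2}_{t})=\gamma_{t}$, the variation bounds, and the compatibility of Definition~\ref{Definition solution of RDE} with the Friz--Victoir approximation scheme --- is correct and fills in exactly what the paper leaves implicit.
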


\begin{definition}[integration of $1$-form]
\label{Definition integration of 1-forms}Suppose $\gamma :\left[ 0,T\right]
\rightarrow \left( T^{\left( 2\right) }\left( 
\mathbb{R}
^{d}\right) ,\left\Vert \cdot \right\Vert \right) $ is a $p$-rough path for
some $p\in \lbrack 2,3)$, and $f:%
\mathbb{R}
^{d}\rightarrow L\left( 
\mathbb{R}
^{d},%
\mathbb{R}
^{e}\right) $ is $Lip\left( \beta \right) $ for some $\beta \geq 1$. Then
continuous path $Y:\left[ 0,T\right] \rightarrow \left( T^{\left( 2\right) }(%
\mathcal{%
\mathbb{R}
}^{e}),\left\Vert \cdot \right\Vert \right) $ is said to be the rough
integral of $f$ against $\gamma $ and denoted as%
\begin{equation*}
Y_{t}=\int_{0}^{t}f\left( \pi _{1}\left( \gamma _{u}\right) \right) d\gamma
_{u}\text{, \ }t\in \left[ 0,T\right] \text{,}
\end{equation*}%
if there exists a continuous path $\Gamma :$ $\left[ 0,T\right] \rightarrow
\left( T^{\left( 2\right) }(\mathcal{%
\mathbb{R}
}^{d+e}),\left\Vert \cdot \right\Vert \right) $ satisfying $\pi _{T^{\left(
2\right) }(\mathcal{%
\mathbb{R}
}^{d})}\left( \Gamma \right) =\gamma $, $\pi _{T^{\left( 2\right) }(\mathcal{%
\mathbb{R}
}^{e})}\left( \Gamma \right) =Y$, and $\Gamma $ is a solution to the rough
differential equation: 
\begin{equation*}
d\Gamma =\left( 1,f\left( \pi _{%
\mathbb{R}
^{d}}\left( \Gamma \right) \right) \right) d\gamma \text{, \ }\Gamma
_{0}=\left( 1,\left( \pi _{1}\left( \gamma _{0}\right) ,0\right) ,0\right)
\in T^{\left( 2\right) }\left( \mathcal{%
\mathbb{R}
}^{d+e}\right) \text{.}
\end{equation*}
\end{definition}

\section{It\^{o} signature and relation with It\^{o} SDE}

\begin{definition}[It\^{o} signature $\mathcal{I}_{n}\left( Z\right) $]
Suppose $Z$ is a continuous local martingale on $[0,\infty )$ taking value
in $%
\mathbb{R}
^{d}$. For integer $n\geq 1$, denote $\mathcal{I}_{n}\left( Z\right)
:[0,\infty )\rightarrow \left( T^{\left( n\right) }\left( 
\mathbb{R}
^{d}\right) ,\left\Vert \cdot \right\Vert \right) $ as the combination of
iterated It\^{o} integrals:%
\begin{equation}
\mathcal{I}_{n}\left( Z\right) _{t}:=\left(
1,Z_{t}-Z_{0},\iint_{0<u_{1}<u_{2}<t}dZ_{u_{1}}\otimes dZ_{u_{2}},\dots
,\idotsint\limits_{0<u_{1}<\cdots <u_{n}<t}dZ_{u_{1}}\otimes \cdots \otimes
dZ_{u_{n}}\right) \text{, }\forall t\in \lbrack 0,\infty )\text{.}
\label{Definition In(Z)}
\end{equation}
\end{definition}

Then we study the (pathwise and probabilistic) regularity of the It\^{o}
signature.

Function $F:\overline{%
\mathbb{R}
^{+}}\rightarrow \overline{%
\mathbb{R}
^{+}}$ is called moderate if : $\left( i\right) $ $x\mapsto F\left( x\right) 
$ is continuous and increasing; $\left( ii\right) $ $F\left( x\right) =0$ if
and only if $x=0$; $\left( iii\right) $ for some $\alpha >1$, $\sup_{x>0}%
\frac{F\left( \alpha x\right) }{x}<\infty $.

\begin{theorem}
Suppose $Z$ is a continuous local martingale taking value in $%
\mathbb{R}
^{d}$. Then $\mathcal{I}_{n}\left( Z\right) :[0,\infty )\rightarrow \left(
T^{\left( n\right) }\left( 
\mathbb{R}
^{d}\right) ,\left\Vert \cdot \right\Vert \right) $ is a group-valued
continuous local martingale which satisfies, for any $T>0$,%
\begin{equation*}
\left\Vert \mathcal{I}_{n}\left( Z\right) \right\Vert _{p-var,\left[ 0,T%
\right] }<\infty \text{, a.s., }\forall p>2\text{.}
\end{equation*}%
Moreover, for any integer $d\geq 1$, any integer $n\geq 1$, any moderate
function $F$, and any $p>2$, there exists constant $C\left( d,n,F,p\right) $
such that 
\begin{equation}
C^{-1}E\left( F\left( \left\Vert \left\langle Z\right\rangle \right\Vert
_{\infty -var,[0,\infty )}^{\frac{1}{2}}\right) \right) \leq E\left( F\left(
\left\Vert \mathcal{I}_{n}\left( Z\right) \right\Vert _{p-var,[0,\infty
)}\right) \right) \leq CE\left( F\left( \left\Vert \left\langle
Z\right\rangle \right\Vert _{\infty -var,[0,\infty )}^{\frac{1}{2}}\right)
\right) \text{,}  \label{BDG inequality for group-valued martingale}
\end{equation}%
holds for any continuous local martingale $Z$ taking value in $%
\mathbb{R}
^{d}$\ starting from $0$.
\end{theorem}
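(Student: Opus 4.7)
The strategy is to reduce everything to Burkholder--Davis--Gundy (BDG) estimates at each tensor level, and then to combine them using the homogeneous norm structure. First, each component $\pi_k(\mathcal{I}_n(Z))$ is a continuous local martingale valued in $(\mathbb{R}^d)^{\otimes k}$: this follows by induction on $k$ and the standard martingale property of It\^{o} integrals of previsible integrands. This is precisely the sense in which $\mathcal{I}_n(Z)$ is a ``group-valued continuous local martingale''. By stopping at $\tau_N := \inf\{t : \|\mathcal{I}_n(Z)_t\|+\langle Z\rangle_t \geq N\}$ I reduce to the case where $Z$ is bounded and $\langle Z\rangle_\infty$ is bounded. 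By induction on $k$, the quadratic variation of $\pi_k(\mathcal{I}_n(Z))$ is obtained from $\pi_{k-1}(\mathcal{I}_n(Z))^{\otimes 2}$ integrated against $d\langle Z\rangle$; iterating the scalar BDG inequality then yields, for every $q\geq 1$,
\begin{equation*}
E\bigl|\pi_k(\mathcal{I}_n(Z))_{s,t}\bigr|^{q} \leq C_{q,k,d}\, E\bigl(\langle Z\rangle_{t}-\langle Z\rangle_{s}\bigr)^{qk/2}.
\end{equation*}
These bounds are consistent with the scaling $|\pi_k|^{1/k}\sim\langle Z\rangle^{1/2}$ encoded by the homogeneous norm.

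Next, to deduce $\|\mathcal{I}_n(Z)\|_{p-var,[0,T]}<\infty$ a.s.\ for any $p>2$, I would run a time change governed by $\langle Z\rangle$ (Dambis--Dubins--Schwarz in the scalar case, or the random time change of the iterated integrals themselves in the multidimensional case), converting the estimate above into $E|\pi_{k}(\mathcal{I}_n(Z))_{s,t}|^{q/k}\leq C|\tau(t)-\tau(s)|^{q/2}$ for the time-changed path. A Kolmogorov-type regularity criterion for group-valued rough paths (as in Friz--Victoir, Ch.~13) then gives finite $p$-variation of the time-changed path for every $p>2$, and invariance of $p$-variation under continuous reparametrization transfers this back to $\mathcal{I}_n(Z)$. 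Combined with the good-$\lambda$ principle, the same mechanism also delivers the upper half of the two-sided inequality: each level contributes $\|\pi_k(\mathcal{I}_n(Z))\|_{p/k-var}^{1/k}$, and the $1/k$ rescaling is exactly what places every level on the common scale $\langle Z\rangle_\infty^{1/2}$ for any moderate $F$.

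The lower bound $EF(\langle Z\rangle_\infty^{1/2})\leq C\,EF(\|\mathcal{I}_n(Z)\|_{p-var,[0,\infty)})$ is the easier half: since $\pi_1(\mathcal{I}_n(Z))_t=Z_t-Z_0$ and the homogeneous norm satisfies $|\pi_1(\gamma)|\leq\|\gamma\|$, one has $\|Z\|_{p-var}\leq\|\mathcal{I}_n(Z)\|_{p-var}$, so applying the L\'{e}pingle $p$-variation BDG inequality for moderate functions (valid for $p>2$) to the scalar martingale $Z$ closes that direction. The main obstacle I anticipate lies in the upper bound: one needs a single clean mechanism that converts the level-wise moment estimates into a $p$-variation bound in the homogeneous norm, uniformly across all moderate $F$, rather than proving a fresh two-sided inequality for each level and then reassembling. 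I expect this to require a careful combination of a Kolmogorov-type rough-path criterion with the good-$\lambda$ principle, together with a Minkowski-type argument exploiting the sub-additive norm equivalent to $\|\cdot\|$, and some uniformity in the time change (e.g.\ control of the inverse of $\langle Z\rangle$ when the latter does not grow to infinity).
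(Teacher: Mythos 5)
Your approach is structurally different from the paper's, and the gap you yourself flag at the end is precisely the one the paper resolves by a different mechanism. The paper does not attack the higher tensor levels directly with iterated BDG, time change and Kolmogorov. Instead it proves (in Lemma~\ref{Lemma group-valued martingale}) the identity $S_n(\mathcal{I}_2(Z)) = \mathcal{I}_n(Z)$ a.s., i.e.\ that the iterated It\^{o} integrals are the unique Lyons extension of the level-$2$ It\^{o} enhancement. That lemma is not obvious: it is proven by an inductive argument showing that the difference between the $k$-th iterated It\^{o} integral and the $k$-th level of the extension is a continuous martingale with finite $q$-variation for some $q<2$, hence constant. Once this identification is in hand, the extension estimate (Theorem~\ref{Theorem of enhancement} / Thm 3.7 in \cite{Terrysnotes}) gives the \emph{pathwise, deterministic} bound $\|\mathcal{I}_n(Z)\|_{p\text{-}var} \leq C_{p,n}\|\mathcal{I}_2(Z)\|_{p\text{-}var}$ (with the reverse inequality trivial by projection), and the whole theorem reduces to the case $n=2$, for which the moderate-function two-sided estimate is exactly Thm 14.9 and Thm 14.12 in \cite{Peter Friz}. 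The reduction is what makes the inequality hold ``uniformly across all moderate $F$'' for free: it is a deterministic inequality between two random variables, composed with any $F$.

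By contrast, your route has two genuine gaps. First, the multidimensional time change: Dambis--Dubins--Schwarz does not produce a Brownian motion for a $d$-dimensional martingale, and the phrase ``the random time change of the iterated integrals themselves'' is not a concrete construction. Even after time changing by $\mathrm{tr}\langle Z\rangle$, you only control the diagonal of the bracket; getting uniform Kolmogorov-type moment estimates for all tensor levels in a deterministic modulus requires exactly the care that FV's Thm 14.9 supplies for the level-$2$ object, and there is no cited or constructed analogue for levels $k\geq 3$ in your sketch. Second, and more serious, the reassembly: level-wise estimates $EF(\|\pi_k\|_{p/k\text{-}var}^{1/k}) \lesssim EF(\langle Z\rangle_\infty^{1/2})$ do \emph{not} combine into $EF(\|\mathcal{I}_n(Z)\|_{p\text{-}var}) \lesssim EF(\langle Z\rangle_\infty^{1/2})$ for a general moderate $F$, because $F$ is neither subadditive nor submultiplicative and the homogeneous-norm $p$-variation is not a sum of level-wise variations (the increments $\pi_k(\gamma_{s,t})$ involve Chen products of lower levels). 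A good-$\lambda$ inequality would need to be established directly for the group-valued $p$-variation, and that is the hard step you would be re-proving; the paper avoids it entirely by the pathwise comparability of $\|\mathcal{I}_n(Z)\|_{p\text{-}var}$ and $\|\mathcal{I}_2(Z)\|_{p\text{-}var}$. If you want to pursue your route, the missing ingredient is precisely Lemma~\ref{Lemma group-valued martingale}.
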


\begin{proof}
That $\mathcal{I}_{n}\left( Z\right) $ is a group-valued continuous local
martingale can be proved e.g. by taking stopping times $\widetilde{\tau _{n}}%
:=\tau _{n}\wedge \inf \left\{ t|\left\vert Z_{t}\right\vert \geq n\right\} $
with $\left\{ \tau _{n}\right\} _{n}$ the stopping times of $Z$. Based on
Lemma \ref{Lemma group-valued martingale} on p\pageref{Lemma group-valued
martingale}, we have, ($S_{n}\left( \cdot \right) $ in Notation \ref%
{Notation Sn(X)})%
\begin{equation*}
S_{n}\left( \mathcal{I}_{2}\left( Z\right) \right) _{t}=\mathcal{I}%
_{n}\left( Z\right) _{t}\text{, }\forall t\in \lbrack 0,\infty )\text{, }%
\forall n\geq 1\text{, a.s..}
\end{equation*}%
Then based on Theorem 3.7 in \cite{Terrysnotes} and Theorem 14.9 \cite{Peter
Friz}, we have, for any $T>0$ and any $p>2$,%
\begin{eqnarray*}
\left\Vert \mathcal{I}_{n}\left( Z\right) \right\Vert _{p-var,\left[ 0,T%
\right] } &\leq &C_{p,n}\left\Vert \mathcal{I}_{2}\left( Z\right)
\right\Vert _{p-var,\left[ 0,T\right] } \\
&\leq &C_{p,n}\left( \left\Vert S_{2}\left( Z\right) \right\Vert _{p-var, 
\left[ 0,T\right] }+\left\Vert \left\langle Z\right\rangle \right\Vert
_{1-var,\left[ 0,T\right] }^{\frac{1}{2}}\right) <\infty \text{, a.s..}
\end{eqnarray*}%
Based on Theorem 3.7 in \cite{Terrysnotes} and Theorem 14.12 in \cite{Peter
Friz}, we have $\left( \ref{BDG inequality for group-valued martingale}%
\right) $ holds.
\end{proof}

Then we investigate the pathwise property of the It\^{o} solution (i.e.
solution of rough differential equation driven by the It\^{o} signature $%
\mathcal{I}_{2}\left( Z\right) $ defined at $\left( \ref{Definition In(Z)}%
\right) $).

\begin{theorem}[Relation with It\^{o} SDE]
\label{Theorem coincidence with solution to SDE}Suppose $Z$ is a continuous
local martingale on $[0,\infty )$ taking value in $%
\mathbb{R}
^{d}$. Suppose $f:%
\mathbb{R}
^{e}\rightarrow L\left( 
\mathbb{R}
^{d},%
\mathbb{R}
^{e}\right) $ is $Lip\left( \beta \right) $ for $\beta >1$. Then for almost
every sample path of $Z$, the solution of the rough differential equation%
\begin{equation}
dY=f\left( \pi _{1}\left( Y\right) \right) d\mathcal{I}_{2}\left( Z\right) 
\text{, \ }Y_{0}=\mathbf{\xi }\in T^{\left( n\right) }\left( 
\mathbb{R}
^{e}\right) \text{,}  \label{RDE driven by Ito signature}
\end{equation}%
exists uniquely, and the solution $Y$ has the explicit expression: 
\begin{equation*}
Y_{t}=\mathbf{\xi }\otimes \mathcal{I}_{n}\left( y\right) _{0,t}\text{, }%
\forall t\in \lbrack 0,\infty )\text{, }\forall n\geq 1\text{,}
\end{equation*}%
with $\mathcal{I}_{n}\left( y\right) $ defined at $\left( \ref{Definition
In(Z)}\right) $ and $y\ $denotes the unique strong continuous solution of
the It\^{o} stochastic differential equation%
\begin{equation}
dy=f\left( y\right) dZ\text{, \ }y_{0}=\pi _{1}\left( \mathbf{\xi }\right)
\in 
\mathbb{R}
^{e}\text{.}  \label{SDE in theorem}
\end{equation}
\end{theorem}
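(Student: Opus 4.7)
The plan is to construct the solution explicitly by an approximation procedure tailored to Definition \ref{Definition solution of RDE}, and then to derive uniqueness either from Theorem \ref{Theorem existence and uniqueness} when $\beta>2$ or from strong uniqueness of the It\^{o} SDE when $1<\beta\le 2$.

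First I would decompose $\mathcal{I}_{2}(Z)$ according to Notation \ref{Notation X1 and X2}. Applying It\^{o}'s formula to $Z_{u}^{i}Z_{u}^{j}$ shows that the symmetric part $\gamma^{2}_{t}$ of $\mathcal{I}_{2}(Z)_{t}$ equals $-\tfrac{1}{2}(\langle Z\rangle_{t}-\langle Z\rangle_{0})$ (viewed as an element of $(\mathbb{R}^{d})^{\otimes 2}$), while the antisymmetric part together with $\pi_{1}$ reconstitutes the Stratonovich lift $S_{2}(Z)$. Thus $\mathcal{I}_{2}(Z)=(S_{2}(Z),-\tfrac{1}{2}\langle Z\rangle)$. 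Since $S_{2}(Z)$ has finite $p$-variation for every $p>2$ a.s.\ and $\langle Z\rangle$ has finite $1$-variation, we may pick $p\in(2,3)$ with $\beta>p-1$, so the existence clause of Theorem \ref{Theorem existence and uniqueness} applies; I nevertheless construct the solution by hand in order to identify it with $\mathbf{\xi}\otimes\mathcal{I}_{n}(y)_{0,\cdot}$.

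Next I would fix dyadic partitions $D_{m}$ of a finite interval $[0,T]$ and set $\gamma^{1,m}$ to be the piecewise-linear interpolation of $Z$ along $D_{m}$ and $\gamma^{2,m}$ the piecewise-linear interpolation of $-\tfrac{1}{2}\langle Z\rangle$ along $D_{m}$. The classical Wong--Zakai theorem for the Stratonovich lift gives $s_{2}(\gamma^{1,m})\to S_{2}(Z)$ in the two uniform senses demanded by Definition \ref{Definition solution of RDE} a.s.\ along the dyadic subsequence, while $\gamma^{2,m}\to-\tfrac{1}{2}\langle Z\rangle$ uniformly by uniform continuity of $\langle Z\rangle$; the uniform $p$- and $\tfrac{p}{2}$-variation bounds are standard for linear interpolants. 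The auxiliary ODE in Definition \ref{Definition solution of RDE} then reads $dy^{m}=f(y^{m})\,dZ^{m}-\tfrac{1}{2}(Dff)(y^{m})\,d\langle Z\rangle^{m}$, which is precisely the Wong--Zakai scheme with the exact Stratonovich-to-It\^{o} correction built in; a standard argument gives $y^{m}\to y$ a.s.\ uniformly on $[0,T]$, where $y$ is the strong solution of $dy=f(y)\,dZ$. Passing from the first level to the full signature $s_{n}(y^{m})$ and identifying the limit with $\mathcal{I}_{n}(y)$ yields, via Definition \ref{Definition solution of RDE}, that $Y_{t}=\mathbf{\xi}\otimes\mathcal{I}_{n}(y)_{0,t}$ is a solution of (\ref{RDE driven by Ito signature}).

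For uniqueness, when $\beta>2$ I simply take $p\in(2,\beta)$ and invoke Theorem \ref{Theorem existence and uniqueness}. In the remaining range $1<\beta\le 2$ I would argue that any candidate solution $Y$ is determined by its first level: Definition \ref{Definition solution of RDE} furnishes approximating ODE solutions whose first components converge uniformly to $\pi_{1}(Y)$, and the same Wong--Zakai passage to the limit identifies $\pi_{1}(Y)$ as a strong solution of the It\^{o} SDE (\ref{SDE in theorem}); the Lipschitz property of $f$ then forces $\pi_{1}(Y)=y$, after which the higher levels are read off as iterated It\^{o} integrals of $y$. The main technical obstacle is the combined Wong--Zakai convergence at all levels of the signature along dyadic partitions: the first-level convergence with the It\^{o} correction is classical, but promoting it to the higher iterated integrals requires uniform-in-$m$ $p$-variation estimates on the enhanced paths, for which the BDG-type inequality (\ref{BDG inequality for group-valued martingale}) for the It\^{o} signature is the decisive tool.
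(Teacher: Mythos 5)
The decomposition $\mathcal{I}_{2}(Z)=(S_{2}(Z),-\tfrac{1}{2}\langle Z\rangle)$ is correct and is in the spirit of the paper, but your route from there diverges in a way that leaves two genuine gaps, both of which the paper's own argument is explicitly designed to avoid.

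\textbf{Gap in the existence step.} You assert that the limit $y^{m}\to y$ (where $dy^{m}=f(y^{m})\,dZ^{m}-\tfrac12(Dff)(y^{m})\,d\langle Z\rangle^{m}$) is a ``standard argument.'' For a $d$-dimensional Brownian motion and $C^{2}$-bounded vector fields this is classical Wong--Zakai; but here $Z$ is an arbitrary continuous local martingale and $f$ is only $Lip(\beta)$ for $\beta>1$, i.e.\ barely $C^{1}$ with a H\"older derivative. In this regime the passage is not a citation but a proof obligation of essentially the same depth as the theorem. Moreover, promoting the first-level convergence to the full signature $s_{n}(y^{m})\to\mathbf{\xi}\otimes\mathcal{I}_{n}(y)_{0,\cdot}$ in the uniform sense required by Definition \ref{Definition solution of RDE} needs uniform-in-$m$ $p$-variation control of the \emph{solutions} $y^{m}$, not of the driver $Z$; the BDG inequality $\left( \ref{BDG inequality for group-valued martingale}\right) $ you invoke controls the wrong object. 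The paper obtains the needed solution-path control from the a priori RDE estimate in Theorem \ref{Theorem estimate of the solution of RDE}, which is a different tool.

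\textbf{Gap in the uniqueness for $1<\beta\le 2$.} In Definition \ref{Definition solution of RDE} a ``solution'' is anything realized as a limit along \emph{some} admissible approximating sequence, which need not be piecewise linear. To prove uniqueness in the regime where Theorem \ref{Theorem existence and uniqueness} is silent, you must show that \emph{every} admissible sequence gives the same first level, but your argument only treats the dyadic piecewise-linear one; saying ``the same Wong--Zakai passage'' for a generic sequence is a claim, not a proof. The paper closes this gap by a different idea: it takes an \emph{arbitrary} solution $Y$ and applies the Euler estimate $\left( \ref{taylor estimate of the first level of solution of RDE}\right) $ (valid for any solution when $\beta\in(p-1,2]$), so that $\pi_{1}(Y_{s,t})$ is the a.s.\ limit of compensated Riemann sums. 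It then shows, using adaptedness of $\pi_{1}(Y)$ and the martingale structure, that these sums converge in $L^{2}$ to $\int_{s}^{t}f(\pi_{1}(Y_{u}))\,dZ_{u}$, so $\pi_{1}(Y)$ is a strong SDE solution regardless of the approximating sequence; uniqueness then follows from Revuz--Yor. Levels two and above are then pinned down by an auxiliary RDE, a ``continuous martingale of finite $q$-variation, $q<2$, is constant'' argument, and the group-valued martingale lemma (Lemma \ref{Lemma group-valued martingale}), none of which appear in your proposal. If you want to salvage the approximation route, you must at minimum replace the appeal to ``standard Wong--Zakai'' with a genuine convergence proof in the $Lip(\beta)$, $\beta>1$, general-martingale setting, and replace the uniqueness step with an argument that applies to an arbitrary admissible approximating sequence.
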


The proof of Theorem \ref{Theorem coincidence with solution to SDE} starts
from page \pageref{Proof of Theorem coincidence with solution to SDE}.

Based on Thm 17.3 \cite{Peter Friz}, the authors identified a relationship
between the classical It\^{o} solution and the (first level) rough It\^{o}
solution when the vector field is $Lip\left( \beta \right) $ for $\beta >2$.
In Prop 4.3 \cite{Davie}, the author proved that, when the vector field is $%
Lip\left( \beta \right) $ for $\beta >1$, the (first level) solution of $%
\left( \ref{RDE driven by Ito signature}\right) $ driven by the It\^{o}
signature of Brownian motion exists uniquely a.s.. Based on Theroem \ref%
{Theorem coincidence with solution to SDE}, the result in \cite{Davie} is
applicable to the whole rough path solution and to all continuous local
martingales.

Based on Theorem \ref{Theorem existence and uniqueness} (on page \pageref%
{Theorem existence and uniqueness}), when the vector field $f$ in Theorem %
\ref{Theorem coincidence with solution to SDE} is $Lip\left( \beta \right) $
for $\beta >2$, the solution $Y$ in $\left( \ref{RDE driven by Ito signature}%
\right) $ is continuous w.r.t. the driving rough path in $d_{p}$-metric
(Definition \ref{Definition of dp metric}) for any $p\in \left( 2,\beta
\right) $ (see Thm 12.10 \cite{Peter Friz} for concrete estimate).

\begin{example}
\label{Example convergence of Ito signature of piecewise linear
approximation}Suppose $Z$ is a continuous local martingale on $[0,\infty )$
taking value in $%
\mathbb{R}
^{d}$. For $T>0$ and finite partition $D=\left\{ t_{k}\right\}
_{k=0}^{n}\subset \left[ 0,T\right] $, define $Z^{D}:\left[ 0,T\right]
\rightarrow 
\mathbb{R}
^{d}$ and $\left\langle Z\right\rangle ^{D}:\left[ 0,T\right] \rightarrow
\left( 
\mathbb{R}
^{d}\right) ^{\otimes 2}$ as 
\begin{eqnarray*}
Z_{0}^{D} &:&=Z_{0}\text{, \ }Z_{t}^{D}:=\text{ }\frac{t-t_{k}}{t_{k+1}-t_{k}%
}\left( Z_{t_{k+1}}-Z_{t_{k}}\right) +Z_{t_{k}}^{D}\text{, }t\in \left[
t_{k},t_{k+1}\right] \text{, }t_{k}\in D\text{,} \\
\left\langle Z\right\rangle _{0}^{D} &:&=0\text{, }\left\langle
Z\right\rangle _{t}^{D}:=\frac{t-t_{k}}{t_{k+1}-t_{k}}\left(
Z_{t_{k+1}}-Z_{t_{k}}\right) ^{\otimes 2}+\left\langle Z\right\rangle
_{t_{k}}^{D}\text{, }t\in \left[ t_{k},t_{k+1}\right] \text{, }t_{k}\in D%
\text{,}
\end{eqnarray*}%
and define $\mathcal{I}_{2}\left( Z\right) ^{D}:\left[ 0,T\right]
\rightarrow \left( T^{\left( 2\right) }\left( 
\mathbb{R}
^{d}\right) ,\left\Vert \cdot \right\Vert \right) $ as%
\begin{equation*}
\mathcal{I}_{2}\left( Z\right) _{t}^{D}:=\left(
1,Z_{t}^{D}-Z_{0}^{D},\int_{0}^{t}\left( Z_{s}^{D}-Z_{0}^{D}\right) \otimes
dZ_{s}^{D}-\frac{1}{2}\left\langle Z\right\rangle _{t}^{D}\right) \text{.}
\end{equation*}%
Then, with $d_{p}$-metric defined in Definition \ref{Definition of dp metric}%
, we have 
\begin{equation}
\lim_{\left\vert D\right\vert \rightarrow 0,D\subset \left[ 0,T\right]
}d_{p-var,\left[ 0,T\right] }\left( \mathcal{I}_{2}\left( Z\right) ^{D},%
\mathcal{I}_{2}\left( Z\right) \right) =0\text{ in prob. for any }p>2\text{,}
\label{convergence of piecewise linear approximation}
\end{equation}%
and the convergence is in $L^{q}$ if $\left\langle Z\right\rangle _{T}$ is
in $L^{\frac{q}{2}}$ for some $q\geq 1$.
\end{example}

\begin{proof}
Denote $s_{2}\left( Z^{D}\right) $ as the step-$2$ signature of $Z^{D}$
(defined at $\left( \ref{Definition of signature}\right) $), and denote $%
S_{2}\left( Z\right) $ as the step-$2$ Stratonovich signature of $Z$. Then
for any $p>2$, (with $d_{p}$-metric defined in Definition \ref{Definition of
dp metric})%
\begin{equation*}
d_{p-var,\left[ 0,T\right] }\left( \mathcal{I}_{2}\left( Z^{D}\right) ,%
\mathcal{I}_{2}\left( Z\right) \right) \leq d_{p-var,\left[ 0,T\right]
}\left( s_{2}\left( Z^{D}\right) ,S_{2}\left( Z\right) \right) +2^{-\frac{1}{%
2}}\left\Vert \left\langle Z\right\rangle ^{D}-\left\langle Z\right\rangle
\right\Vert _{\frac{p}{2}-var,\left[ 0,T\right] }^{\frac{1}{2}}\text{.}
\end{equation*}%
Based on Thm 14.16 in \cite{Peter Friz} and Lemma \ref{Lemma strong
convergence of bracket process} (on page \pageref{Lemma strong convergence
of bracket process}), we have $\left( \ref{convergence of piecewise linear
approximation}\right) $ holds.
\end{proof}

The Corollary below follows from Definition \ref{Definition integration of
1-forms} and Theorem \ref{Theorem coincidence with solution to SDE}.

\begin{corollary}[Integration of one-forms]
\label{Corollary for pathwise Ito integral}Suppose $Z$ is a continuous local
martingale on $[0,\infty )$ taking value in $%
\mathbb{R}
^{d}$, and $f:%
\mathbb{R}
^{e}\rightarrow L\left( 
\mathbb{R}
^{d},%
\mathbb{R}
^{e}\right) $ is $Lip\left( \beta \right) $ for $\beta >1$. Then for almost
every sample path of $Z$, the rough integral has the explicit expression: 
\begin{equation*}
S_{n}\left( \int_{0}^{\cdot }f\left( Z\right) d\mathcal{I}_{2}\left(
Z\right) \right) _{t}=\mathcal{I}_{n}\left( y\right) _{t}\text{, }t\in
\lbrack 0,\infty )\text{, }\forall n\geq 1\text{,}
\end{equation*}%
with $y$ denotes the classical It\^{o} integral%
\begin{equation*}
y_{t}:=\int_{0}^{t}f\left( Z_{u}\right) dZ_{u}\text{, }t\in \lbrack 0,\infty
)\text{.}
\end{equation*}
\end{corollary}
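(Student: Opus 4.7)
The plan is to realize the rough integral as a projection of an RDE solution via Definition \ref{Definition integration of 1-forms}, and then identify that RDE solution with the It\^o signature of an enlarged SDE using Theorem \ref{Theorem coincidence with solution to SDE}.

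First, I would introduce the auxiliary vector field $\tilde{f}:\mathbb{R}^{d+e}\to L(\mathbb{R}^{d},\mathbb{R}^{d+e})$ defined by $\tilde{f}(x,y)(v):=(v,f(x)v)$. The identity component is constant in $(x,y)$ and has operator norm $1$, so it has vanishing derivatives and is trivially $Lip(\beta)$; since $f$ is $Lip(\beta)$ with $\beta>1$, the combined $\tilde{f}$ is $Lip(\beta)$ as well. By Definition \ref{Definition integration of 1-forms}, the rough integral $\int_{0}^{\cdot}f(Z)\,d\mathcal{I}_{2}(Z)$ is obtained as $\pi_{T^{(2)}(\mathbb{R}^{e})}(\Gamma)$, where $\Gamma:[0,\infty)\to T^{(2)}(\mathbb{R}^{d+e})$ is the solution of the RDE
\begin{equation*}
d\Gamma=\tilde{f}(\pi_{1}(\Gamma))\,d\mathcal{I}_{2}(Z),\qquad \Gamma_{0}=(1,(Z_{0},0),0).
\end{equation*}

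Second, I would apply Theorem \ref{Theorem coincidence with solution to SDE} to this RDE. Almost surely, the solution exists uniquely and is given by $\Gamma_{t}=\mathcal{I}_{2}(\tilde{y})_{0,t}$, where $\tilde{y}=(\tilde{y}^{(1)},\tilde{y}^{(2)})$ is the strong solution of the It\^o SDE $d\tilde{y}=\tilde{f}(\tilde{y})\,dZ$ with $\tilde{y}_{0}=(Z_{0},0)$. Because $\tilde{f}$ is block-triangular, the SDE decouples: the first $d$ components satisfy $d\tilde{y}^{(1)}=dZ$, hence $\tilde{y}^{(1)}=Z$; the last $e$ components satisfy $d\tilde{y}^{(2)}=f(\tilde{y}^{(1)})\,dZ=f(Z)\,dZ$, hence $\tilde{y}^{(2)}_{t}=y_{t}$.

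Third, I would project onto the $\mathbb{R}^{e}$ block. The canonical projection $\pi_{T^{(2)}(\mathbb{R}^{e})}$ commutes with iterated It\^o integration against a common integrator, so $\pi_{T^{(2)}(\mathbb{R}^{e})}(\mathcal{I}_{2}(\tilde{y}))=\mathcal{I}_{2}(\tilde{y}^{(2)})=\mathcal{I}_{2}(y)$, giving the claim for $n=1,2$. For the $S_{n}$-enhancement with $n\geq 3$, Theorem \ref{Theorem of enhancement} says that $S_{n}$ is the unique continuous $T^{(n)}(\mathbb{R}^{e})$-valued enhancement with finite $p$-variation, and the theorem on the It\^o signature at the beginning of Section 3 supplies that $\mathcal{I}_{n}(y)$ is a group-valued continuous local martingale enhancing $\mathcal{I}_{2}(y)$ with $\|\mathcal{I}_{n}(y)\|_{p\text{-}var,[0,T]}<\infty$ a.s. for any $p>2$. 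Uniqueness then forces $S_{n}(\int_{0}^{\cdot}f(Z)\,d\mathcal{I}_{2}(Z))=\mathcal{I}_{n}(y)$.

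The main (mild) obstacle is verifying that the $Lip(\beta)$ regularity transfers from $f$ to the lifted $\tilde{f}$; this is the step that makes Theorem \ref{Theorem coincidence with solution to SDE} legitimately applicable to the enlarged system and thereby unlocks the explicit identification. Everything else is a bookkeeping exercise in projecting the iterated It\^o integrals of $(Z,y)$ onto the $\mathbb{R}^{e}$ coordinates and invoking the uniqueness of the Lyons enhancement.
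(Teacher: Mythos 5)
Your proof is correct and takes essentially the same route as the paper, which simply states that the Corollary ``follows from Definition~\ref{Definition integration of 1-forms} and Theorem~\ref{Theorem coincidence with solution to SDE}'' without spelling out the details. You have filled in exactly those details: the rough integral is by definition the $T^{(2)}(\mathbb{R}^e)$-projection of the solution $\Gamma$ to the enlarged RDE with the block vector field $\tilde f(x,y)v=(v,f(x)v)$, the $Lip(\beta)$ regularity of $\tilde f$ is inherited from $f$ because the identity block is constant, Theorem~\ref{Theorem coincidence with solution to SDE} identifies $\Gamma_t=\boldsymbol{\xi}\otimes\mathcal{I}_n(\tilde y)_{0,t}$ with $\tilde y=(Z,y)$ solving the decoupled It\^o SDE, and the $T^{(2)}(\mathbb{R}^e)$-projection is a group homomorphism that kills the $\boldsymbol{\xi}$ factor (since its $\mathbb{R}^e$-block is trivial) and commutes with the iterated integrals, yielding $\mathcal{I}_2(y)$. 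The passage to $n\ge 3$ via uniqueness of the Lyons enhancement together with the regularity of $\mathcal{I}_n(y)$ (or, equivalently, via Lemma~\ref{Lemma group-valued martingale} applied to the martingale $y$) is also how the paper's own statement is to be read. The only blemish is the domain of $f$: the Corollary (and your $\tilde f$) requires $f$ to be defined on $\mathbb{R}^d$ as in Definition~\ref{Definition integration of 1-forms}, not $\mathbb{R}^e$ as typed in the Corollary's statement; you silently and correctly used the former.
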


When vector field $f$ in Corollary \ref{Corollary for pathwise Ito integral}
is $Lip\left( \beta \right) $ for $\beta >1$, the rough integral $%
\int_{0}^{\cdot }f\left( Z\right) d\mathcal{I}_{2}\left( Z\right) $ is
continuous w.r.t. the driving rough path in $d_{p}$-metric (p239 \cite%
{TerryQianpaper}).

Based on Corollary \ref{Corollary for pathwise Ito integral}, we have a
pathwise It\^{o}'s lemma, which decomposes the Stratonovich signature as the
sum of two rough paths: one is a group-valued continuous local martingale
and the other is constructed from continuous bounded variation paths.

Theorem \ref{Theorem Ito's lemma in rough path} below follows from Lyons and
Qian \cite{TerryQianpaper} (p244), only that the rough integral $t\mapsto
\int_{0}^{t}Df\left( Z_{u}\right) d\mathcal{I}_{2}\left( Z\right) _{u}$ has
the explicit expression as seen in Corollary \ref{Corollary for pathwise Ito
integral}.

\begin{theorem}[It\^{o}'s lemma]
\label{Theorem Ito's lemma in rough path}Suppose $Z$ is a continuous local
martingale on $[0,\infty )$ taking value in $%
\mathbb{R}
^{d}$, and suppose $f:%
\mathbb{R}
^{d}\rightarrow 
\mathbb{R}
^{e}$ is $Lip\left( \beta \right) $ for $\beta >2$. Denote $S_{2}\left(
f\left( Z\right) \right) $ and $S_{2}\left( Z\right) $ as the step-$2$
Stratonovich signature of $f\left( Z\right) $ and $Z$. Then the rough
integral equation holds for almost every sample path of $Z$:%
\begin{equation}
S_{2}\left( f\left( Z\right) \right) _{0,t}=\int_{0}^{t}Df\left(
Z_{u}\right) dS_{2}\left( Z\right) _{u}=\int_{0}^{t}\left( Df\left(
Z_{u}\right) d\mathcal{I}_{2}\left( Z\right) _{u}+dH_{u}\right) \text{, }%
\forall t\in \lbrack 0,\infty )\text{,}  \label{Ito's lemma}
\end{equation}%
where $H:[0,\infty )\rightarrow \left( T^{\left( 2\right) }\left( 
\mathbb{R}
^{e}\right) ,\left\Vert \cdot \right\Vert \right) $ is defined as%
\begin{gather*}
H_{t}:=\left( 1,x_{t}^{1},\int_{0}^{t}x_{u}^{1}\otimes
dx_{u}^{1}+x_{t}^{2}\right) \text{, \ }t\in \lbrack 0,\infty )\text{,} \\
\text{with }x_{t}^{1}:=\frac{1}{2}\int_{0}^{t}\left( D^{2}f\right) \left(
Z_{u}\right) d\left\langle Z\right\rangle _{u}\text{ and }x_{t}^{2}:=\frac{1%
}{2}\int_{0}^{t}\left( Df\right) \left( Z_{u}\right) \otimes \left(
Df\right) \left( Z_{u}\right) d\left\langle Z\right\rangle _{u}\text{,}
\end{gather*}%
and $\int_{0}^{\cdot }\left( Df\left( Z_{u}\right) d\mathcal{I}_{2}\left(
Z\right) _{u}+dH_{u}\right) :[0,\infty )\rightarrow \left( T^{\left(
2\right) }\left( 
\mathbb{R}
^{e}\right) ,\left\Vert \cdot \right\Vert \right) $ is defined as 
\begin{eqnarray*}
\pi _{1}\left( \int_{0}^{t}\left( Df\left( Z_{u}\right) d\mathcal{I}%
_{2}\left( Z\right) _{u}+dH_{u}\right) \right) &:&=\pi _{1}\left(
\int_{0}^{t}Df\left( Z_{u}\right) d\mathcal{I}_{2}\left( Z\right)
_{u}\right) +\pi _{1}\left( H_{t}\right) \text{,} \\
\pi _{2}\left( \int_{0}^{t}\left( Df\left( Z_{u}\right) d\mathcal{I}%
_{2}\left( Z\right) _{u}+dH_{u}\right) \right) &:&=\pi _{2}\left(
\int_{0}^{t}Df\left( Z_{u}\right) d\mathcal{I}_{2}\left( Z\right)
_{u}\right) +\pi _{2}\left( H_{t}\right) \\
&&+\int_{0}^{t}\pi _{1}\!\left( \int_{0}^{u}Df\left( Z\right) d\mathcal{I}%
_{2}\left( Z\right) \right) \otimes d\,\pi _{1}\!\left( H_{u}\right) \\
&&+\int_{0}^{t}\pi _{1}\!\left( H_{u}\right) \otimes d\,\pi _{1}\!\left(
\int_{0}^{u}Df\left( Z\right) d\mathcal{I}_{2}\left( Z\right) \right) \text{,%
}
\end{eqnarray*}%
where the cross integrals between $\pi _{1}\!\left( \int Df\left( Z\right) d%
\mathcal{I}_{2}\left( Z\right) \right) $ and $\pi _{1}\!\left( H\right) $
are defined as Young integrals.
\end{theorem}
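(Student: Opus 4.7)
The plan is to reduce the statement to the Stratonovich version of Itô's lemma in rough path form (already established in Lyons--Qian \cite{TerryQianpaper}), and then rewrite the Stratonovich driver as the Itô driver plus a smooth correction built from the quadratic variation; the explicit form of the Itô rough integral is then supplied by Corollary \ref{Corollary for pathwise Ito integral}.

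First, I would invoke Lyons--Qian to assert that, for $f\in Lip(\beta)$ with $\beta>2$, one has, almost surely, the rough-path chain rule
\begin{equation*}
S_{2}(f(Z))_{0,t}=\int_{0}^{t}Df(Z_{u})\,dS_{2}(Z)_{u},\qquad t\in[0,\infty),
\end{equation*}
where $S_{2}(Z)$ denotes the step-$2$ Stratonovich signature of $Z$. Next I would exploit the pointwise identity $S_{2}(Z)_{0,t}=\mathcal{I}_{2}(Z)_{0,t}\otimes(1,0,\tfrac{1}{2}\langle Z\rangle_{t})$, or equivalently the level-wise decomposition $\pi_{1}(S_{2}(Z)_{0,t})=\pi_{1}(\mathcal{I}_{2}(Z)_{0,t})$ and $\pi_{2}(S_{2}(Z)_{0,t})=\pi_{2}(\mathcal{I}_{2}(Z)_{0,t})+\tfrac{1}{2}\langle Z\rangle_{t}$. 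Because $\langle Z\rangle$ has finite $1$-variation, while $\mathcal{I}_{2}(Z)$ has finite $p$-variation for any $p>2$, and $p^{-1}+1^{-1}>1$, all cross-integrations below are covered by Young theory on top of the rough integral against $\mathcal{I}_{2}(Z)$.

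Second, I would split the rough integral $\int_{0}^{t}Df(Z_{u})\,dS_{2}(Z)_{u}$ along this decomposition. At level one, $\pi_{1}$ of the integral becomes $\pi_{1}\!\bigl(\int_{0}^{t}Df(Z_{u})\,d\mathcal{I}_{2}(Z)_{u}\bigr)+\tfrac{1}{2}\int_{0}^{t}D^{2}f(Z_{u})\,d\langle Z\rangle_{u}$, where the extra term comes from the second-order correction built into the definition of the rough integral against a bracket-type component (this is precisely $\pi_{1}(H_{t})=x^{1}_{t}$). At level two, the rough integral expands into four summands: a pure martingale piece $\pi_{2}\!\bigl(\int_{0}^{t}Df(Z_{u})\,d\mathcal{I}_{2}(Z)_{u}\bigr)$; a pure bracket piece $\tfrac{1}{2}\int_{0}^{t}Df(Z_{u})\otimes Df(Z_{u})\,d\langle Z\rangle_{u}=x^{2}_{t}$; an iterated BV integral $\int_{0}^{t}x^{1}_{u}\otimes dx^{1}_{u}$ coming from the self-interaction of the $\langle Z\rangle$-driven component; and two Young cross-integrals between the martingale part $\pi_{1}\!\bigl(\int_{0}^{\cdot}Df(Z)\,d\mathcal{I}_{2}(Z)\bigr)$ and $\pi_{1}(H)=x^{1}$. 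Collecting these terms one reads off the defining formulas for $H$ and for the integral $\int_{0}^{t}\bigl(Df(Z_{u})\,d\mathcal{I}_{2}(Z)_{u}+dH_{u}\bigr)$ displayed in the statement.

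Finally, Corollary \ref{Corollary for pathwise Ito integral} identifies the rough integral $\int Df(Z)\,d\mathcal{I}_{2}(Z)$ with the group-valued Itô integral $\mathcal{I}_{2}(\int Df(Z)\,dZ)$, and this is what makes the right-most expression in \eqref{Ito's lemma} meaningful and explicit rather than purely formal. The main obstacle, and essentially the only non-mechanical step, is the bookkeeping in the level-two decomposition: one must verify that the Young cross-integrals arising from splitting $dS_{2}(Z)=d\mathcal{I}_{2}(Z)+\tfrac{1}{2}d\langle Z\rangle$ match exactly the two cross terms in the definition of $\int_{0}^{t}(Df(Z_{u})\,d\mathcal{I}_{2}(Z)_{u}+dH_{u})$, and that the self-iterated term $\int x^{1}\otimes dx^{1}$ is absorbed into $\pi_{2}(H_{t})$; the compatibility of these terms is precisely what the $(p,p/2)$-rough path setup of Friz--Victoir guarantees, so no additional estimates are needed beyond what is cited.
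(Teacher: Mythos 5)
Correct, and essentially the same route as the paper, whose entire justification is the one-line remark that the theorem follows from Lyons--Qian \cite{TerryQianpaper} (p.~244) together with the explicit form of $\int Df(Z)\,d\mathcal{I}_2(Z)$ supplied by Corollary \ref{Corollary for pathwise Ito integral} --- precisely the two ingredients you invoke. Your level-two bookkeeping, which the paper delegates entirely to Lyons--Qian, does reassemble $\pi_2$ of the right-hand side of \eqref{Ito's lemma}; the one attribution worth sharpening is that $x^{2}_{t}$ is most cleanly seen as $\tfrac{1}{2}\bigl\langle\int_{0}^{\cdot}Df(Z)\,dZ\bigr\rangle_{t}$, the It\^{o}-to-Stratonovich correction at level two for the martingale part of $f(Z)$, rather than a term produced directly by the $\langle Z\rangle$-component of the driver.
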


\section{Averaging Stratonovich solutions}

As mentioned in the introduction, we want to recover the It\^{o} solution by
concatenating a mean of Stratonovich solutions. The idea is simple, but the
concrete formulation needs some care. Here we try to give a sensible
explanation of our formulation.

Suppose $\gamma :\left[ 0,T\right] \rightarrow \left( T^{\left( 2\right)
}\left( 
\mathbb{R}
^{d}\right) ,\left\Vert \cdot \right\Vert \right) $ is a fixed $p$-rough
path for some $p\in \lbrack 2,3)$, and $M$ a continuous martingale taking
value in $%
\mathbb{R}
^{d}$ with $S_{2}\left( M\right) $ denotes the step-$2$ Stratonovich
signature of $M$. Further assume that 
\begin{equation*}
\left( \gamma +S_{2}\left( M\right) \right) _{t}:=\left( 1,\pi _{1}\left(
\gamma _{t}\right) +M_{t},\pi _{2}\left( \gamma _{t}\right) +\int_{0}^{t}\pi
_{1}\left( \gamma _{u}\right) \otimes \circ dM_{u}+\int_{0}^{t}M_{u}\otimes
\circ d\pi _{1}\left( \gamma _{u}\right) +\int_{0}^{t}M_{u}\otimes \circ
dM_{u}\right)
\end{equation*}%
is a $p$-rough process for some $p\in \left( 2,3\right) $. We want to get
the mathematical expression of the discrete increment of It\^{o} solution on
small interval $\left[ s,t\right] $.

Suppose $f:%
\mathbb{R}
^{e}\rightarrow L\left( 
\mathbb{R}
^{d},%
\mathbb{R}
^{e}\right) $ is $Lip\left( \beta \right) $ for $\beta >p$ and $\mathbf{\xi }%
\in T^{\left( n\right) }\left( 
\mathbb{R}
^{e}\right) $ for some integer $n\geq 2$. Denote $y^{i}:\left[ 0,T\right]
\rightarrow T^{\left( n\right) }\left( 
\mathbb{R}
^{e}\right) $, $i=1,2$, as the solution to the rough differential equations:%
\begin{eqnarray}
dy^{1} &=&f\left( \pi _{1}\left( y^{1}\right) \right) d\gamma \text{, \ }%
y_{s}^{1}=\mathbf{\xi }\in T^{\left( n\right) }\left( 
\mathbb{R}
^{e}\right) \text{,}  \label{inner RDE} \\
dy^{2} &=&f\left( \pi _{1}\left( y^{2}\right) \right) d\left( \gamma
+S_{2}\left( M\right) \right) \text{, \ }y_{s}^{2}=\mathbf{\xi }\in
T^{\left( n\right) }\left( 
\mathbb{R}
^{e}\right) \text{.}  \notag
\end{eqnarray}%
Denote $z^{i}:\left[ 0,T\right] \rightarrow T^{\left( 2\right) }\left( 
\mathbb{R}
^{e}\right) $, $i=1,2$, as the solution to the rough differential equations:%
\begin{eqnarray*}
dz^{1} &=&f\left( \pi _{1}\left( z^{1}\right) \right) d\gamma \text{, \ }%
z_{s}^{1}=\left( 1,\pi _{1}\left( \mathbf{\xi }\right) ,0\right) \in
T^{\left( 2\right) }\left( 
\mathbb{R}
^{e}\right) \text{,} \\
dz^{2} &=&f\left( \pi _{1}\left( z^{2}\right) \right) d\left( \gamma
+S_{2}\left( M\right) \right) \text{, \ }z_{s}^{2}=\left( 1,\pi _{1}\left( 
\mathbf{\xi }\right) ,0\right) \in T^{\left( 2\right) }\left( 
\mathbb{R}
^{e}\right) \text{.}
\end{eqnarray*}%
Then by using uniqueness of enhancement (i.e. Theorem \ref{Theorem of
enhancement}), it can be checked that (with $S_{n}\left( \cdot \right) $ in
Notation \ref{Notation Sn(X)}), $y_{u}^{i}=\mathbf{\xi }\otimes S_{n}\left(
z^{i}\right) _{s,u}$ and $y^{i}$ is the solution to the rough differential
equation driven by $z^{i}$: 
\begin{equation}
dy^{i}=1_{%
\mathbb{R}
^{e}}dz^{i}\text{, \ }y_{s}^{i}=\mathbf{\xi }\in T^{\left( n\right) }\left( 
\mathbb{R}
^{e}\right) \text{. }  \label{inner change of RDE}
\end{equation}

We want to modify the initial value of $y^{2}$ in $\left( \ref{inner RDE}%
\right) $ in such a way that, if we take expectation (of the modified $y^{2}$%
), we get $y^{1}$. However, the vector field $f$ in $\left( \ref{inner RDE}%
\right) $ may not be homogeneous w.r.t. scalar addition, so changing the
initial value may incur a corresponding change in the increment of the
solution path, which is not easy to cope with, especially when we want an
equality (i.e. expectation of modified $y^{2}$ equals to $y^{1}$). Instead,
we assume that $z^{1}$ and $z^{2}$ are the "Stratonovich" solutions which
are known, and consider the rough differential equations $\left( \ref{inner
change of RDE}\right) $ instead of $\left( \ref{inner RDE}\right) $.

Suppose that $z^{1}$ and $z^{2}$ (so $y^{1}$ and $y^{2}$) are known, which
we call the "Stratonovich" solutions. When the noise $M$ is present, we want
to modify the initial value of $y^{2}$ in $\left( \ref{inner change of RDE}%
\right) $\ in such a way that, we are expected to get the value $y_{t}^{1}$.
More specifically, we want to find the initial value $\delta _{s,t}\in
T^{\left( n\right) }\left( 
\mathbb{R}
^{e}\right) $, such that, the solution of the rough differential equation%
\begin{equation}
d\widetilde{y}^{2}=1_{%
\mathbb{R}
^{e}}dz^{2}\text{, \ }\widetilde{y}_{s}^{2}=\delta ^{s,t}\in T^{\left(
n\right) }\left( 
\mathbb{R}
^{e}\right) \text{,}  \label{inner RDE 1}
\end{equation}%
satisfies ($y_{s,t}:=y_{s}^{-1}\otimes y_{t}$)%
\begin{equation}
E\left( \widetilde{y}_{t}^{2}\right) =y_{t}^{1}\text{ i.e. }E\left( \delta
^{s,t}\otimes y_{s,t}^{2}\right) =\mathbf{\xi }\otimes y_{s,t}^{1}\text{ .}
\label{inner relationship}
\end{equation}

We let%
\begin{equation*}
\delta ^{s,t}:=\mathbf{\xi }\otimes y_{s,t}^{1}\otimes E\left(
y_{s,t}^{2}\right) ^{-1}\text{ (}\gamma \text{ is a fixed path).}
\end{equation*}%
Then we define the discrete It\^{o} increment (on $\left[ s,t\right] $) as
the "discounted" Stratonovich solution (at $t$): 
\begin{equation}
dy=1_{%
\mathbb{R}
^{e}}dz^{1}\text{, }y_{s}=\delta ^{s,t}\in T^{\left( n\right) }\left( 
\mathbb{R}
^{e}\right) \text{,}  \label{inner RDE 2}
\end{equation}%
which has the explicit expression: 
\begin{equation}
\mathbf{\xi }\otimes y_{s,t}^{1}\otimes E\left( y_{s,t}^{2}\right)
^{-1}\otimes y_{s,t}^{1}\text{.}  \label{discrete Ito increment}
\end{equation}%
When the noise is present, we consider $\left( \ref{inner RDE 1}\right) $
instead of $\left( \ref{inner RDE 2}\right) $, and (based on $\left( \ref%
{inner relationship}\right) $) we have $E\left( \widetilde{y}_{t}^{2}\right)
=y_{t}^{1}$. Thus, we define the discrete It\^{o} increment as a discounted
Stratonovich solution, in the sense that, when the noise is present, one is
expected to get the Stratonovich solution.

We concatenate the discrete It\^{o} increment in the form of $\left( \ref%
{discrete Ito increment}\right) $, let the mesh of partitions tends to zero,
and recover the solution of the rough differential equation%
\begin{equation*}
dy=f\left( \pi _{1}\left( y\right) \right) d\mathcal{I}_{2}\left( \gamma
,M\right) \text{,}
\end{equation*}%
where $\mathcal{I}_{2}\left( \gamma ,M\right) $ denotes the $p$-rough path
for some $p\in \lbrack 2,3)$: 
\begin{equation*}
\mathcal{I}_{2}\left( \gamma ,M\right) _{t}:=\left( 1,\pi _{1}\left( \gamma
_{t}\right) ,\pi _{2}\left( \gamma _{t}\right) -\frac{1}{2}%
\sum_{i,j=1}^{d}\left\langle M^{i},M^{j}\right\rangle _{t}e_{i}\otimes
e_{j}\right) \text{, \ }t\in \left[ 0,T\right] \text{.}
\end{equation*}

One might be tempted to replace the discrete increment $\left( \ref{discrete
Ito increment}\right) $ by the expectation of the solution of
forward-backward-forward equation, which, however, does not work, even on
the first level.

Our averaging process can be applied when $\gamma $ is a fixed $p$-rough
path, $p\in \lbrack 2,3)$, and $M=\int \phi dB$ with $\phi ~$a fixed path
taking value in $d\times d$ matrices and $B$ a $d$-dimensional Brownian
motion. When $\gamma =S_{2}\left( Z\right) $ is the Stratonovich signature
of a sample path of continuous local martingale $Z$, by setting $\phi
=\left\langle Z\right\rangle ^{\frac{1}{2}}$ we can recover the It\^{o}
solution in rough path.

\subsection{Rough path underlying}

\begin{definition}[perturbed rough path]
\label{Definition of perturbed rough path}Suppose $\gamma :\left[ 0,T\right]
\rightarrow \left( T^{\left( 2\right) }\left( 
\mathbb{R}
^{d}\right) ,\left\Vert \cdot \right\Vert \right) $ is a fixed $p$-rough
path on $\left[ 0,T\right] $ for some $p\in \lbrack 2,3)$, $\phi $ is a
fixed path defined on $\left[ 0,T\right] $ taking value in $d\times d$
matrices satisfying $\max_{1\leq i,j\leq d}\int_{0}^{T}\left( \phi
_{u}^{i,j}\right) ^{2}du<\infty $, and $B$ a $d$-dimensional Brownian
motion. Define continuous $d$-dimensional martingale $M$ as the It\^{o}
integral: 
\begin{equation}
M_{t}:=\int_{0}^{t}\phi _{u}dB_{u}\text{, }t\in \left[ 0,T\right] \text{.}
\label{Definition of M}
\end{equation}%
With $S_{2}\left( M\right) $ denotes the step-$2$ Stratonovich signature of $%
M$, we assume that $\gamma +S_{2}\left( M\right) :\left[ 0,T\right]
\rightarrow \left( T^{\left( 2\right) }\left( 
\mathbb{R}
^{d}\right) ,\left\Vert \cdot \right\Vert \right) $ is a $p$-rough process,
with the explicit expression: ($\gamma _{s,t}:=\gamma _{s}^{-1}\otimes
\gamma _{t}$)%
\begin{multline*}
\left( \gamma +S_{2}\left( M\right) \right) _{s,t}:=\left( 1,\pi _{1}\left(
\gamma _{s,t}\right) +M_{t}-M_{s},\pi _{2}\left( \gamma _{s,t}\right) \right.
\\
\left. +\iint_{s<u_{1}<u_{2}<t}\circ dM_{u_{1}}\otimes \circ
dM_{u_{2}}+R\left( \gamma ,M\right) ^{s,t}\right) \text{, }\forall 0\leq
s\leq t\leq T\text{,}
\end{multline*}%
where the cross term $R\left( \gamma ,M\right) $ satisfies%
\begin{equation}
E\left( R\left( \gamma ,M\right) ^{s,t}\right) =0\text{, }\forall 0\leq
s\leq t\leq T\text{.}  \label{Condition on expectation of signature}
\end{equation}
\end{definition}

Since $\gamma $ is fixed, the condition $\left( \ref{Condition on
expectation of signature}\right) $ is satisfied e.g. when the cross integral 
$R\left( \gamma ,M\right) $ is defined in classical It\^{o} sense or
Stratonovich sense, or when $R\left( \gamma ,M\right) $ is defined as the $%
L^{1}$ limit of piecewise-linear approximations.

\begin{definition}
Suppose $\gamma $ and $M$ are defined as in Definition \ref{Definition of
perturbed rough path}, $n\geq 1$ is an integer and $f:%
\mathbb{R}
^{e}\rightarrow L\left( 
\mathbb{R}
^{d},%
\mathbb{R}
^{e}\right) $ is $Lip\left( \beta \right) $ for $\beta >p$. For finite
partition $D=\left\{ t_{j}\right\} $ of $\left[ 0,T\right] $, define
piecewise-constant process $y^{n,D}\left( \gamma ,M\right) :\left[ 0,T\right]
\rightarrow T^{\left( n\right) }\left( 
\mathbb{R}
^{e}\right) $ as: ($y_{s,t}:=y_{s}^{-1}\otimes y_{t}$)%
\begin{align}
y^{n,D}\left( \gamma ,M\right) _{0}& :=\mathbf{\xi }\in T^{\left( n\right)
}\left( 
\mathbb{R}
^{e}\right) \text{,}  \label{Definition of y_n^D} \\
y^{n,D}\left( \gamma ,M\right) _{t}& :=y^{n,D}\left( \gamma ,M\right)
_{t_{j}}\otimes y_{t_{j},t_{j+1}}^{1,j}\otimes E\left(
y_{t_{j},t_{j+1}}^{2,j}\right) ^{-1}\otimes y_{t_{j},t_{j+1}}^{1,j}\text{,\ }%
t\in (t_{j},t_{j+1}]\text{,}  \notag
\end{align}%
where $y^{1,j}$ and $y^{2,j}$ denotes the solution of the rough differential
equations on $\left[ t_{j},t_{j+1}\right] $: 
\begin{eqnarray}
dy_{u}^{1,j} &=&f\left( \pi _{1}\left( y_{u}^{1,j}\right) \right) d\gamma
_{u}\text{, \ }y_{t_{j}}^{1,j}=y^{n,D}\left( \gamma ,M\right) _{t_{j}}\in
T^{\left( n\right) }\left( 
\mathbb{R}
^{e}\right) \text{,}  \label{Definition of y^(i,k)} \\
dy_{u}^{2,j} &=&f\left( \pi _{1}\left( y_{u}^{2,j}\right) \right) d\left(
\gamma +S_{2}\left( M\right) \right) _{u}\text{, \ }y_{t_{j}}^{2,j}=y^{n,D}%
\left( \gamma ,M\right) _{t_{j}}\in T^{\left( n\right) }\left( 
\mathbb{R}
^{e}\right) \text{.}  \notag
\end{eqnarray}
\end{definition}

It is worth noting that, (since $\gamma $ is fixed) $\left\{ y^{n,D}\left(
\gamma ,M\right) \right\} _{n,D}$ are deterministic.

\begin{theorem}
\label{Theorem general vector field for Ito RDE}Suppose $\gamma $ and $M$
are defined as in Definition \ref{Definition of perturbed rough path} and $%
p\in \lbrack 2,3)$. Denote $p$-rough path $\mathcal{I}_{2}\left( \gamma
,M\right) :\left[ 0,T\right] \rightarrow \left( T^{\left( 2\right) }\left( 
\mathbb{R}
^{d}\right) ,\left\Vert \cdot \right\Vert \right) $ as%
\begin{equation*}
\mathcal{I}_{2}\left( \gamma ,M\right) _{t}:=\left( 1,\pi _{1}\left( \gamma
_{t}\right) ,\pi _{2}\left( \gamma _{t}\right) -\frac{1}{2}%
\sum_{i,j=1}^{d}\left\langle M^{i},M^{j}\right\rangle _{t}e_{i}\otimes
e_{j}\right) \text{, }t\in \left[ 0,T\right] \text{.}
\end{equation*}%
Suppose $f:%
\mathbb{R}
^{e}\rightarrow L\left( 
\mathbb{R}
^{d},%
\mathbb{R}
^{e}\right) $ is $Lip\left( \beta \right) $ for $\beta >p$. If we assume
that, for some integer $n\geq 2$, 
\begin{equation}
E\left( \left\Vert \left( \gamma +S_{2}\left( M\right) \right) \right\Vert
_{p-var,\left[ 0,T\right] }^{np}\right) <\infty \text{,}
\label{condition of integrability}
\end{equation}%
then for $\mathbf{\xi }\in T^{\left( n\right) }\left( 
\mathbb{R}
^{e}\right) $, $y^{n,D}\left( \gamma ,M\right) $ (defined at $\left( \ref%
{Definition of y_n^D}\right) $) converge uniformly as $\left\vert
D\right\vert \rightarrow 0$ to the unique solution of the rough differential
equation%
\begin{equation}
dY=f\left( \pi _{1}\left( Y\right) \right) d\mathcal{I}_{2}\left( \gamma
,M\right) \text{, \ }Y_{0}=\mathbf{\xi }\in T^{\left( n\right) }\left( 
\mathbb{R}
^{e}\right) \text{.}  \label{RDE in consideration}
\end{equation}%
More specifically, 
\begin{equation}
\lim_{\left\vert D\right\vert \rightarrow 0}\max_{1\leq k\leq n}\sup_{0\leq
t\leq T}\left\vert \pi _{k}\left( y^{n,D}\left( \gamma ,M\right) _{t}\right)
-\pi _{k}\left( Y_{t}\right) \right\vert =0\text{.}
\label{uniform convergence in prob}
\end{equation}
\end{theorem}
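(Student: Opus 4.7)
The plan is to reduce uniform convergence of $y^{n,D}(\gamma,M)$ to $Y$ to a local comparison on each subinterval $[t_j,t_{j+1}]$ of $D$ between the discrete increment $\Delta_j := y^{1,j}_{t_j,t_{j+1}} \otimes E(y^{2,j}_{t_j,t_{j+1}})^{-1} \otimes y^{1,j}_{t_j,t_{j+1}}$ and the true increment $Y_{t_j}^{-1}\otimes Y_{t_{j+1}}$ of the solution of $dY = f(\pi_1(Y))\,d\mathcal{I}_2(\gamma,M)$. Once this local matching is quantified, a standard almost-multiplicative-to-multiplicative / sewing argument (in the spirit of Theorem \ref{Theorem of enhancement}) converts it into the uniform convergence (\ref{uniform convergence in prob}). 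The construction of $\Delta_j$ is, by design, the group-theoretic formula $\alpha \otimes \beta^{-1} \otimes \alpha$ whose logarithm is, to leading order, $2\log\alpha - \log\beta$; so the It\^o--Stratonovich drift $+\tfrac12 \sum_{i,j}\langle M^i,M^j\rangle\, e_i\otimes e_j$ that lives inside $E(y^{2,j})$ gets converted into the drift $-\tfrac12\sum_{i,j}\langle M^i,M^j\rangle\, e_i\otimes e_j$ that defines $\mathcal{I}_2(\gamma,M)$.

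To implement this I would first write down the Davie--Euler expansion of $y^{1,j}_{s,t}$ and $y^{2,j}_{s,t}$ (with $s:=t_j$, $t:=t_{j+1}$, $\xi := \pi_1(y^{n,D}(\gamma,M)_{t_j})$), available because $f$ is $Lip(\beta)$ with $\beta>p$: the level-$k$ increment of an RDE solution driven by a $p$-rough path $\eta$ is determined, modulo a remainder of order strictly greater than one in $\|\eta\|_{p\text{-var},[s,t]}$, by iterated applications of $f$ to $\pi_1(\eta_{s,t})$ and $\pi_2(\eta_{s,t})$. Taking the expectation of $y^{2,j}_{s,t}$ I would use: (i) $M$ is a martingale so $E(M_{s,t})=0$; (ii) the cross term $R(\gamma,M)^{s,t}$ inside $\pi_2((\gamma+S_2(M))_{s,t})$ has mean zero by the standing hypothesis (\ref{Condition on expectation of signature}); and (iii) the Stratonovich self-integral satisfies the classical identity
\[
\iint_{s<u_1<u_2<t}\circ dM_{u_1}\otimes \circ dM_{u_2} \;=\; \iint_{s<u_1<u_2<t} dM_{u_1}\otimes dM_{u_2} \;+\; \tfrac12\sum_{i,j}\langle M^i,M^j\rangle_{s,t}\, e_i\otimes e_j,
\]
whose first summand is a mean-zero It\^o integral. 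Consequently $E(y^{2,j}_{s,t})$ agrees at leading order with the Stratonovich-type expansion of $y^{1,j}_{s,t}$, but with $\pi_2(\gamma_{s,t})$ replaced by $\pi_2(\gamma_{s,t}) + \tfrac12\sum_{i,j}\langle M^i,M^j\rangle_{s,t}\, e_i\otimes e_j$.

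A direct algebraic computation of $\Delta_j$ in $T^{(n)}(\mathbb{R}^e)$, exploiting $\pi_1(\alpha)=\pi_1(\beta)$ at leading order (so that the symmetric cross term $[\pi_1(\alpha)-\pi_1(\beta)]^{\otimes 2}$ is of higher order), then shows that the leading contribution to $\Delta_j$ matches the Davie expansion of the increment of the RDE driven by $\mathcal{I}_2(\gamma,M)$, differing from that of $y^{1,j}_{s,t}$ precisely by the drift $-\tfrac12\sum_{i,j}\langle M^i,M^j\rangle_{s,t}\, e_i\otimes e_j$ at level $2$. The main obstacle is controlling the summed remainders as $|D|\to 0$: the Davie estimate bounds the local error in the expansion of $y^{2,j}$ by a power strictly greater than $1$ of $\|\gamma+S_2(M)\|_{p\text{-var},[s,t]}$, but to sum these local errors to a vanishing total error at level $n$ one needs uniform $L^1$ control of powers of the driving $p$-variation up to order $n$. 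This is exactly what the integrability hypothesis (\ref{condition of integrability}) provides; level-$2$ matching is then propagated to levels $3,\ldots,n$ by the uniqueness-of-enhancement Theorem \ref{Theorem of enhancement}, and Gronwall-type iteration over the partition $D$ finally yields the uniform convergence (\ref{uniform convergence in prob}) from $t=0$ to $t=T$.
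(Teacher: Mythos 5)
Your outline correctly identifies the core mechanism of the paper's proof: on each $[t_j,t_{j+1}]$ one writes the Davie--Euler expansion of $y^{1,j}$ and $y^{2,j}$, takes the expectation of the level-$2$ part of $y^{2,j}$ using the martingale property of $M$, the mean-zero assumption on the cross term $R(\gamma,M)$, and the It\^o--Stratonovich correction, and then computes the level-$1$ and level-$2$ projections of $\alpha\otimes\beta^{-1}\otimes\alpha$ (the level-$2$ projection being $2\pi_2(\alpha)-\pi_2(\beta)+(\pi_1(\alpha)-\pi_1(\beta))^{\otimes 2}$) to obtain the It\^o drift $-\tfrac12\langle M\rangle$ inside $\mathcal{I}_2(\gamma,M)$. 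The integrability hypothesis $\left(\ref{condition of integrability}\right)$ is used exactly as you say, to make the summed remainder controls $\sum_j\widetilde\omega_k(t_j,t_{j+1})$ vanish.

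However, your mechanism for levels $k\geq 3$ is wrong as stated. You propose to propagate level-$2$ matching to higher levels via Theorem \ref{Theorem of enhancement} (uniqueness of enhancement). That theorem applies to $p$-rough paths, i.e.\ continuous $T^{(2)}$-valued paths of finite $p$-variation. The object $y^{n,D}(\gamma,M)$ is a \emph{piecewise-constant} path in $T^{(n)}(\mathbb{R}^e)$ built by tensoring discrete increments, not a rough path, so the uniqueness-of-enhancement theorem gives you no information about it. The paper instead handles levels $k\geq 3$ directly: it uses the $p$-variation bound $\left(\ref{estimate of solution of RDE}\right)$ in Theorem \ref{Theorem estimate of the solution of RDE} together with Young's inequality to obtain a local Euler error $\widetilde\omega_k(t_j,t_{j+1})$ of order $k/p>1$ in the driving $p$-variation at every level $k$, and then runs an induction over $k=1,\ldots,n$. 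Within each level, it telescopes $\pi_k(y^{n,D}_{t_j}-Y_{t_j})$ as $\sum_i\bigl[\pi_f(t_{i+1},y^{n,D}_{t_{i+1}},\cdot)_{t_j}-\pi_f(t_i,y^{n,D}_{t_i},\cdot)_{t_j}\bigr]$ and bounds each summand by the Lipschitz dependence of the RDE flow on its initial value (Theorem \ref{Theorem continuity in initial value}), using the inductively obtained bound on the lower levels of $y^{n,D}$. This is the substantive content behind what you loosely call ``Gronwall-type iteration,'' and it is \emph{not} an enhancement argument. Finally, one also needs the modulus of continuity of $Y$ to upgrade convergence at the partition points $\{t_j\}$ to uniform-in-$t$ convergence, a small but necessary final step your plan does not mention.
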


The proof of Theorem \ref{Theorem general vector field for Ito RDE} starts
from page \pageref{Proof of Theorem general vector field for Ito RDE}.

\begin{remark}
\label{Remark convergence of first level}Based on the proof of Theorem \ref%
{Theorem general vector field for Ito RDE}, $E\left( \left\Vert \left(
\gamma +S_{2}\left( M\right) \right) \right\Vert _{p-var,\left[ 0,T\right]
}^{q}\right) <\infty $ for some $q>p$ is sufficient for the convergence of
the first level in $\left( \ref{uniform convergence in prob}\right) $.
\end{remark}

As a specific example where $\gamma $ is not a sample path of a martingale,
suppose $\left( X,B\right) $ is a $2d$-dimensional continuous Gaussian
process with independent components, and $B$ is a $d$-dimensional Brownian
motion. Further assume that the covariance function of $\left( X,B\right) $
has finite $\rho $-variation for some $\rho \in \lbrack 1,\frac{3}{2})$ (see
Section 15.3.2 \cite{Peter Friz}). Then based on Thm 15.33 \cite{Peter Friz}%
, our Theorem \ref{Theorem general vector field for Ito RDE} applies e.g. to 
$\left( \gamma ,B\right) $ with $\gamma =\left( \gamma ^{1},\gamma
^{2}\right) $ (Notation \ref{Notation X1 and X2}), where $\gamma ^{1}$ is
the step-$2$ Stratonovich signature of a sample path of fractional Brownian
motion with Hurst parameter $H>\frac{1}{3}$ and $\gamma ^{2}$ is a fixed
continuous path with finite $2^{-1}p$-variation for some $p\in \lbrack 2,3)$.

\subsection{Martingale underlying}

When $\gamma $ (in Definition \ref{Definition of perturbed rough path})\ is
a sample path of a continuous martingale, by choosing the right noise, we
can recover It\^{o} solution (i.e. solution of rough differential equation
driven by $\mathcal{I}_{2}\left( Z\right) $ defined at $\left( \ref%
{Definition In(Z)}\right) $ on page \pageref{Definition In(Z)}). The
following definition gives the explicit construction of the noise.

\begin{definition}
Suppose $Z$ is a continuous $d$-dimensional martingale in $L^{2}$\ on $\left[
0,T\right] $, and there exists a $d\times d$-matrices-valued adapted process 
$\psi \ $in $L^{2}$ on $\left[ 0,T\right] $\ such that%
\begin{equation*}
\left\langle Z\right\rangle _{t}=\int_{0}^{t}\psi _{s}^{T}\psi _{s}ds\text{, 
}\forall t\in \left[ 0,T\right] \text{, a.s..}
\end{equation*}%
Suppose $B$ is a $d$-dimensional Brownian motion, independent from $Z$. For
a fixed sample path of $Z$, define process $\widetilde{Z}$ as the It\^{o}
integral: 
\begin{equation}
\widetilde{Z}_{t}:=\int_{0}^{t}\psi _{s}dB_{s}\text{, }t\in \left[ 0,T\right]
\text{.}  \label{Definition of Ztilde}
\end{equation}
\end{definition}

The Corollary below follows from Theorem \ref{Theorem general vector field
for Ito RDE} and Remark \ref{Remark convergence of first level}, with proof
on page \pageref{Proof of Corollary of martingale underlying}.

\begin{corollary}
\label{Corollary for sample paths of martingale}Suppose $Z$ is a continuous $%
d$-dimensional martingale on $\left[ 0,T\right] $ in $L^{2n+\epsilon }$ for
some $\epsilon >0$ and integer $n\geq 1$. Suppose $f:%
\mathbb{R}
^{e}\rightarrow L\left( 
\mathbb{R}
^{d},%
\mathbb{R}
^{e}\right) $ is $Lip\left( \beta \right) $ for $\beta >2$. Denote $Y$ as
the solution to the rough differential equation: ($\mathcal{I}_{2}\left(
Z\right) $ defined at $\left( \ref{Definition In(Z)}\right) $ on page %
\pageref{Definition In(Z)}) 
\begin{equation*}
dY=f\left( \pi _{1}\left( Y\right) \right) d\mathcal{I}_{2}\left( Z\right) 
\text{, \ }Y_{0}=\mathbf{\xi }\in T^{\left( n\right) }\left( 
\mathbb{R}
^{e}\right) \text{.}
\end{equation*}%
Then for almost every sample path of $Z$, with $S_{2}\left( Z\right) $
denotes the step-$2$ Stratonovich signature of $Z$ and $\widetilde{Z}$
defined at $\left( \ref{Definition of Ztilde}\right) $, $y^{n,D}\left(
S_{2}\left( Z\right) ,\widetilde{Z}\right) $ (defined at $\left( \ref%
{Definition of y_n^D}\right) $) converge to $Y$ uniformly, i.e.%
\begin{equation*}
\lim_{\left\vert D\right\vert \rightarrow 0}\max_{1\leq k\leq n}\sup_{0\leq
t\leq T}\left\vert \pi _{k}\left( y^{n,D}\left( S_{2}\left( Z\right) ,%
\widetilde{Z}\right) _{t}\right) -\pi _{k}\left( Y_{t}\right) \right\vert =0%
\text{.}
\end{equation*}
\end{corollary}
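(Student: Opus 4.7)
The plan is to apply Theorem~\ref{Theorem general vector field for Ito RDE} and Remark~\ref{Remark convergence of first level} pathwise in $\omega$, with the choice $\gamma := S_{2}(Z)(\omega)$, $\phi := \psi(\omega)$, and $M := \widetilde{Z} = \int \phi\, dB$. Because $B$ is independent of $Z$, for a.e.\ $\omega$ the matrix-valued path $\phi$ is deterministic with $\int_{0}^{T} |\phi_{u}^{i,j}|^{2}\, du \leq \langle Z\rangle_{T}(\omega) < \infty$, matching Definition~\ref{Definition of perturbed rough path}. The critical identity is
\[
\langle \widetilde{Z}^{i}, \widetilde{Z}^{j}\rangle_{t} = \int_{0}^{t} (\phi_{s}^{T}\phi_{s})_{ij}\, ds = \langle Z^{i}, Z^{j}\rangle_{t},
\]
so the It\^{o}--Stratonovich correction on the second tensor level converts $\pi_{2}(S_{2}(Z)_{t})$ into $\pi_{2}(\mathcal{I}_{2}(Z)_{t})$, giving $\mathcal{I}_{2}(\gamma, M) = \mathcal{I}_{2}(Z)$. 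Hence the RDE in the Corollary is precisely the one appearing in Theorem~\ref{Theorem general vector field for Ito RDE}.

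To secure the integrability \eqref{condition of integrability}, I would pick $p \in (2, 3)$ with $p < \beta$ and $np \leq 2n + \epsilon$; such $p$ exists because $\beta > 2$. Under $Z \in L^{2n+\epsilon}$, Burkholder--Davis--Gundy gives $\sqrt{\langle Z\rangle_{T}} \in L^{np}$. For a.e.\ $\omega$, $\|\gamma\|_{p-var,[0,T]}(\omega)$ is finite by the regularity theorem on the It\^{o} signature of continuous local martingales, and the conditional quadratic variation $\langle \widetilde{Z}\rangle_{T} = \langle Z\rangle_{T}(\omega)$ is a constant, so the group-valued BDG inequality~\eqref{BDG inequality for group-valued martingale} applied to $\widetilde{Z}$ yields
\[
E_{B}\bigl(\|S_{2}(\widetilde{Z})\|_{p-var,[0,T]}^{np}\bigr) \leq C\,\langle Z\rangle_{T}(\omega)^{np/2} < \infty.
\]
Standard rough-path cross-estimates then control $\|\gamma + S_{2}(M)\|_{p-var}^{np}$, confirming~\eqref{condition of integrability} for a.e.\ $\omega$.

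The centering $E(R(\gamma, M)^{s,t}) = 0$ follows from independence. Viewing $(Z(\omega), \widetilde{Z})$ as a semimartingale on the enlarged filtration, since $Z(\omega)$ is deterministic we have $\langle Z(\omega), \widetilde{Z}\rangle \equiv 0$, so the Stratonovich cross integrals coincide with their It\^{o} counterparts. The integral $\int_{s}^{t} Z(\omega) \otimes d\widetilde{Z} = \int_{s}^{t} Z(\omega)\phi \otimes dB$ is a mean-zero It\^{o} integral of a deterministic integrand; the symmetric cross integral $\int_{s}^{t} \widetilde{Z}\otimes dZ(\omega)$ is handled by integration by parts, and its mean vanishes because $E_{B}\widetilde{Z}_{u} = 0$. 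With all hypotheses verified, the conclusion follows directly from Theorem~\ref{Theorem general vector field for Ito RDE}.

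The main obstacle is the pathwise-in-$\omega$ construction of $\gamma + S_{2}(M)$ as a bona fide $p$-rough process and the verification of the conditional centering, rather than merely the joint one. This forces a careful disintegration of the joint rough path structure of $(Z, \widetilde{Z})$ along the independent $\sigma$-algebras generated by $Z$ and $B$, and a Fubini-type argument to transfer both the $p$-variation bound and the mean-zero property from the joint probability space down to almost every fixed sample $\omega$ of $Z$.
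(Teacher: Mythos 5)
Your proposal follows the same overall strategy as the paper: identify $\gamma = S_2(Z)(\omega)$, $M = \widetilde{Z}$, verify the hypotheses of Theorem~\ref{Theorem general vector field for Ito RDE} and Remark~\ref{Remark convergence of first level} for almost every $\omega$, and then apply the theorem. The centering argument is essentially identical to the paper's (independence of $B$ from $Z$ kills the cross quadratic covariation, so Stratonovich and It\^{o} cross integrals agree and have conditional mean zero).

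Where you differ from the paper is in how the integrability condition $\eqref{condition of integrability}$ is secured. The paper treats $(Z,\widetilde{Z})$ as a single $2d$-dimensional continuous martingale on the joint probability space, applies the joint BDG estimate~$\eqref{BDG and Doob}$ to $S_2(Z+\widetilde{Z})$ (which already contains the cross second-level data), obtains an unconditional finite moment of order $np$ with $p := 2+n^{-1}\epsilon$, and then conditions on $Z$ to read off finiteness for a.e. $\omega$. You instead bound $E_B(\|S_2(\widetilde{Z})\|_{p-var}^{np})$ directly by the group-valued BDG conditionally on $\omega$, and defer the control of the cross integrals in $\gamma + S_2(M)$ to ``standard rough-path cross-estimates,'' which you then flag yourself as the main technical obstacle. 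The paper's joint-then-condition route is cleaner precisely because it never has to bound the cross second-level separately; if you pursue the fully conditional route you do need to supply those cross estimates, and it is not immediate because $\int \widetilde{Z}\otimes d\gamma$ is an integral against a fixed path of merely finite $p$-variation, not a martingale integral to which BDG applies. One small slip worth fixing: the quadratic covariation of $\widetilde{Z} = \int \psi\,dB$ is $\langle \widetilde{Z}^i,\widetilde{Z}^j\rangle = \int (\psi\psi^T)_{ij}$, not $\int(\psi^T\psi)_{ij}$, while $\langle Z^i,Z^j\rangle = \int(\psi^T\psi)_{ij}$; the identity $\mathcal{I}_2(\gamma,\widetilde{Z}) = \mathcal{I}_2(Z)$ therefore requires $\psi\psi^T = \psi^T\psi$, which holds when $\psi$ is chosen as the symmetric square root. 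This is implicit in the paper but, since you invoked the identity explicitly, it should be stated.
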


\section{Proofs}

Our constants may implicitly depend on dimensions ($d$ and $e$). We specify
the dependence on other constants (e.g. $C_{p}$), but the exact value of
constants may change from line to line.

\subsection{Results from rough path}

We state the results in rough path theory that will be used in our proofs.

\noindent The Theorem below follows from Thm 14.12 in \cite{Peter Friz} and
Doob's maximal inequality.

\begin{theorem}
Suppose $M$ is a continuous $d$-dimensional local martingale. Denote $%
S_{2}\left( M\right) $ as the step-$2$ Stratonovich signature of $M$. Then
for any $q>0$ and any $p>2$, 
\begin{equation}
E\left( \left\Vert \left\langle M\right\rangle _{T}\right\Vert
^{2^{-1}q}\right) \sim E\left( \left\Vert \left\langle M\right\rangle
\right\Vert _{1-var,\left[ 0,T\right] }^{2^{-1}q}\right) \sim E\left(
\left\Vert S_{2}\left( M\right) \right\Vert _{p-var,\left[ 0,T\right]
}^{q}\right) \text{.}  \label{BDG and Doob for local}
\end{equation}%
If further assume that $M$ is an $L^{q}$ martingale for some $q>1$, then for
any $p>2$,%
\begin{equation}
E\left( \left\vert M_{T}-M_{0}\right\vert ^{q}\right) \sim E\left(
\left\Vert \left\langle M\right\rangle _{T}\right\Vert ^{2^{-1}q}\right)
\sim E\left( \left\Vert \left\langle M\right\rangle \right\Vert _{1-var, 
\left[ 0,T\right] }^{2^{-1}q}\right) \sim E\left( \left\Vert S_{2}\left(
M\right) \right\Vert _{p-var,\left[ 0,T\right] }^{q}\right) \text{.}
\label{BDG and Doob}
\end{equation}%
The equivalency in $\left( \ref{BDG and Doob for local}\right) $ and $\left( %
\ref{BDG and Doob}\right) $ are up to a positive constant depending on $p$, $%
q$ and $d$.
\end{theorem}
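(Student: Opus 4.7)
The plan is to chain together three equivalences, each supplied by a standard input: a pathwise comparison between the endpoint value and the total variation of the bracket process, the BDG-type signature bound of Friz--Victoir, and classical scalar BDG combined with Doob's $L^{q}$ maximal inequality.

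First I would establish $E(\|\langle M\rangle_{T}\|^{q/2}) \sim E(\|\langle M\rangle\|_{1-var,[0,T]}^{q/2})$ at the pathwise level. Each diagonal entry $\langle M^{i}\rangle$ is non-decreasing, so its $1$-variation on $[0,T]$ equals its value $\langle M^{i}\rangle_{T}$. The off-diagonal entries $\langle M^{i},M^{j}\rangle$ are controlled by the Kunita--Watanabe inequality, which bounds each of their increments by the geometric mean of the corresponding diagonal increments; summing yields $\|\langle M\rangle\|_{1-var,[0,T]} \le C_{d}\sum_{i}\langle M^{i}\rangle_{T}$, which is comparable to $\|\langle M\rangle_{T}\|$ for any standard matrix norm. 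The reverse direction is immediate. This comparison is pointwise in $\omega$, so taking expectations delivers the first equivalence with a purely dimensional constant.

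Next I would invoke Theorem 14.12 of \cite{Peter Friz} (the BDG-type estimate for the step-$2$ Stratonovich signature of a continuous local martingale), which provides $E(\|S_{2}(M)\|_{p-var,[0,T]}^{q}) \sim E(\langle M\rangle_{T}^{q/2})$ for every $p>2$ and every $q>0$, with a constant depending on $p,q,d$. Combined with the first step, this establishes the full chain (\ref{BDG and Doob for local}) for any continuous local martingale and every $q>0$.

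Finally, under the stronger hypothesis that $M$ is an $L^{q}$-martingale with $q>1$, I would adjoin the term $E(|M_{T}-M_{0}|^{q})$ to the chain by a sandwich argument. The trivial bound $|M_{T}-M_{0}|^{q} \le \sup_{t\in[0,T]}|M_{t}-M_{0}|^{q}$ supplies one direction; for the reverse I would apply Doob's $L^{q}$-maximal inequality to the non-negative submartingale $|M-M_{0}|$, which requires $q>1$, obtaining $E(\sup_{t}|M_{t}-M_{0}|^{q}) \le C_{q}\,E(|M_{T}-M_{0}|^{q})$. Classical BDG then identifies both quantities with $E(\langle M\rangle_{T}^{q/2})$, yielding (\ref{BDG and Doob}). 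There is no genuine obstacle in the argument; the only point worth flagging is that the restriction $q>1$ in the second chain is forced precisely by the use of Doob's inequality, so the first chain (\ref{BDG and Doob for local}), which does not involve $|M_{T}-M_{0}|$, is the strongest formulation available for $q\le 1$.
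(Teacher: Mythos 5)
Your proof is correct and follows essentially the same route as the paper, which only sketches the argument by citing Theorem 14.12 of Friz--Victoir (the BDG-type estimate for the Stratonovich signature of a continuous local martingale) together with Doob's $L^{q}$ maximal inequality. The one detail the paper leaves implicit --- the pathwise Kunita--Watanabe comparison of $\left\Vert \left\langle M\right\rangle _{T}\right\Vert $ with $\left\Vert \left\langle M\right\rangle \right\Vert _{1-var,\left[ 0,T\right] }$ --- is the step you spell out explicitly, and you have it right.
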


Suppose $\gamma $ is a $p$-rough path for $p\in \lbrack 2,3)$, denote $%
S_{n}\left( \gamma \right) $ as in Notation \ref{Notation Sn(X)} (on page %
\pageref{Notation Sn(X)}). Based on Thm 3.7 in \cite{Terrysnotes}, for any
integer $k\geq 1$, there exists constant $C_{p,k}$ such that, 
\begin{equation}
\left\vert \pi _{k}\left( S_{n}\left( \gamma \right) _{s,t}\right)
\right\vert \leq C_{p,k}\left\Vert \gamma \right\Vert _{p-var,\left[ s,t%
\right] }^{k}\text{, }\forall 0\leq s\leq t\leq T\text{.}
\label{estimates of higher level of extension with factorial decay}
\end{equation}%
As a consequence, $S_{n}\left( \gamma \right) :\left[ 0,T\right] \rightarrow
\left( T^{\left( n\right) }\left( 
\mathbb{R}
^{d}\right) ,\sum_{k=1}^{n}\left\vert \cdot \right\vert ^{\frac{1}{k}%
}\right) $ satisfies 
\begin{equation}
\left\Vert S_{n}\left( \gamma \right) \right\Vert _{p-var,\left[ s,t\right]
}\leq C_{p,n}\left\Vert \gamma \right\Vert _{p-var,\left[ s,t\right] }\text{%
, }\forall 0\leq s\leq t\leq T\text{.}
\label{estimates of p-variation of enhancement}
\end{equation}%
Moreover, for any integer $n\geq 2$ and any finite partition $D=\left\{
t_{k}\right\} _{k=0}^{K}$ of $\left[ s,t\right] \subseteq \left[ 0,T\right] $%
, we have (with $0\in \left( 
\mathbb{R}
^{d}\right) ^{\otimes \left( n+1\right) }$)

\begin{eqnarray}
&&\left\vert \pi _{n+1}\left( \left( S_{n}\left( \gamma \right)
_{t_{0},t_{1}}\oplus 0\right) \otimes \cdots \otimes \left( S_{n}\left(
\gamma \right) _{t_{K-1},t_{K}}\oplus 0\right) \right) -\pi _{n+1}\left(
S_{n+1}\left( \gamma \right) _{s,t}\right) \right\vert
\label{estimates of uniform convergence of multiplicative functional} \\
&\leq &C_{p,n}\sum_{k=0}^{K-1}\left\Vert \gamma \right\Vert _{p-var,\left[
t_{k},t_{k+1}\right] }^{n+1}\text{.}  \notag
\end{eqnarray}

Based on Prop 14.9 \cite{Peter Friz}, we have

\begin{theorem}
\label{Theorem regulartiy of Stratonovich signature of M}Suppose $%
M:[0,\infty )\rightarrow 
\mathbb{R}
^{d}$ is a continuous local martingale. Then the step-$2$ Stratonovich
signature of $M$ is a geometric $p$-rough process on $\left[ 0,T\right] $
for any $p>2$ and any $T>0$.
\end{theorem}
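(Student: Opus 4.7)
The plan is to verify the two defining conditions of a geometric $p$-rough process: (i) $S_{2}(M)$ takes values almost surely in the step-$2$ free nilpotent Lie group $G^{2}(\mathbb{R}^{d})\subset T^{(2)}(\mathbb{R}^{d})$, and (ii) it has finite $p$-variation almost surely for every $p>2$. Geometricity in the $d_{p}$-closure sense then follows by exhibiting an approximating sequence of lifts of bounded variation paths.

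First, I would reduce to a bounded continuous martingale by the standard localization: introduce stopping times $\tau_{n}:=\inf\{t:|M_{t}|\geq n\}\wedge n$, note that $M^{\tau_{n}}$ is a bounded continuous martingale, and observe that both the $p$-variation bound and the group-valued property are stable under restriction to a random closed subinterval on which they hold, so the conclusion on $[0,T]$ follows after sending $n\to\infty$ on the event $\{\tau_{n}\geq T\}$, which has probability $1$ in the limit.

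Next, for the group-valued property, I would apply Stratonovich integration by parts to the continuous semimartingale components $M^{i},M^{j}$. Since Stratonovich calculus obeys the classical Leibniz rule,
\begin{equation*}
\int_{s}^{t}(M_{u}^{i}-M_{s}^{i})\circ dM_{u}^{j}+\int_{s}^{t}(M_{u}^{j}-M_{s}^{j})\circ dM_{u}^{i}=(M_{t}^{i}-M_{s}^{i})(M_{t}^{j}-M_{s}^{j}),
\end{equation*}
so the symmetric part of $\pi_{2}(S_{2}(M)_{s,t})$ equals $\frac{1}{2}(M_{t}-M_{s})^{\otimes 2}$. This is precisely the condition that $S_{2}(M)_{s,t}\in G^{2}(\mathbb{R}^{d})$, i.e.\ $S_{2}(M)$ is weakly geometric. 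For the $p$-variation, the BDG-type estimate already stated in the paper, namely
\begin{equation*}
E\left(\left\Vert S_{2}(M)\right\Vert _{p-var,[0,T]}^{q}\right)\lesssim_{p,q,d} E\left(\left\Vert \langle M\rangle \right\Vert _{1-var,[0,T]}^{q/2}\right),
\end{equation*}
applied to the bounded localized martingale (for which the right-hand side is finite), yields $\|S_{2}(M)\|_{p-var,[0,T]}<\infty$ almost surely for every $p>2$.

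Finally, to upgrade from weakly geometric to geometric (closure of lifts of $C^{1-\mathrm{var}}$ paths), I would use piecewise-linear approximations $M^{D}$ along a refining sequence of partitions $D$ of $[0,T]$ with mesh tending to zero. Each $s_{2}(M^{D})$ is the step-$2$ signature of a bounded variation path, hence manifestly geometric; Thm 14.16 in Friz--Victoir (already invoked in the proof of the preceding Example) gives $d_{p-var,[0,T]}(s_{2}(M^{D}),S_{2}(M))\to 0$ in probability, so a.s.\ along a subsequence. Since the set of geometric $p$-rough paths is closed in $d_{p}$, this places $S_{2}(M)$ in that closure a.s. The main obstacle is really bookkeeping: integrability is obtained only after localization, and the three ingredients (group-valued from Stratonovich IBP, $p$-variation from BDG, density from PL approximations) must be combined on a single almost sure event — but each piece is available directly from results cited earlier in the excerpt.
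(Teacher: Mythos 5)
Your proposal is correct and essentially reconstructs the standard argument, but the paper itself does \emph{not} prove this result: the statement is immediately preceded by ``Based on Prop 14.9 \cite{Peter Friz}, we have'' and is used as an imported fact, so there is no in-paper proof to compare against. Your sketch is a faithful rendering of how that proposition is established in the cited reference: localize to a bounded martingale, get the $p$-variation bound from the BDG-type estimate (the paper's $(\ref{BDG and Doob for local})$), and conclude geometricity via piecewise-linear (Wong--Zakai) approximation in $d_{p\mathrm{-var}}$ (Thm~14.16 of \cite{Peter Friz}). The localization and the countable-intersection-over-$p$ bookkeeping you flag are genuine but routine, and you handle them correctly; geometric $p$-rough does indeed imply geometric $p'$-rough for $p'>p$, so a dense sequence of exponents suffices.

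One small observation: the Stratonovich integration-by-parts step, while true, is logically redundant once you invoke the piecewise-linear approximation. Each $s_2(M^D)$ takes values in the closed set $G^2(\mathbb{R}^d)$, and $d_{p\mathrm{-var}}$-convergence forces uniform (hence pointwise) convergence of the levels, so the limit is automatically $G^2$-valued; there is no need to verify weak geometricity separately. Including it does no harm, but it is worth recognizing that ``geometric'' (closure of lifted $C^{1\mathrm{-var}}$ paths) already implies ``weakly geometric'' (group-valued), so the density argument is the one doing all the work.
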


Suppose $\gamma =\left( \gamma ^{1},\gamma ^{2}\right) $ (Notation \ref%
{Notation X1 and X2} on p\pageref{Notation X1 and X2}) is a $p$-rough path
for some $p\in \lbrack 2,3)$ on $\left[ 0,T\right] $ taking value in $\left(
T^{\left( 2\right) }\left( 
\mathbb{R}
^{d}\right) ,\left\Vert \cdot \right\Vert \right) $. Then it is clear (or
based on \cite{Lejay and Victoir}) that,%
\begin{equation*}
\left\Vert \gamma \right\Vert _{p-var,\left[ s,t\right] }^{p}\leq \left\Vert
\gamma ^{1}\right\Vert _{p-var,\left[ s,t\right] }^{p}+\left\Vert \gamma
^{2}\right\Vert _{2^{-1}p-var,\left[ s,t\right] }^{2^{-1}p}\leq
C_{d}\left\Vert \gamma \right\Vert _{p-var,\left[ s,t\right] }^{p}\text{, }%
\forall 0\leq s\leq t\leq T\text{.}
\end{equation*}%
Based on Theorem 12.6 in \cite{Peter Friz} and Corollary 12.8 in \cite{Peter
Friz}, we have: (the second level estimation can be obtained by considering
the rough differential equation of the signature of the solution)

\begin{theorem}
\label{Theorem estimate of the solution of RDE}Suppose $\gamma \ $is a $p$%
-rough path for some $p\in \lbrack 2,3)$ on $\left[ 0,T\right] $ taking
value in $\left( T^{\left( 2\right) }\left( 
\mathbb{R}
^{d}\right) ,\left\Vert \cdot \right\Vert \right) $ and $f:%
\mathbb{R}
^{e}\rightarrow L\left( 
\mathbb{R}
^{d},%
\mathbb{R}
^{e}\right) $ is $Lip\left( \beta \right) $ for $\beta \in (p-1,2]$. If for
integer $n\geq 2$, $Y:\left[ 0,T\right] \rightarrow \left( T^{\left(
n\right) }\left( 
\mathbb{R}
^{e}\right) ,\left\Vert \cdot \right\Vert \right) $ is a solution to the
rough differential equation%
\begin{equation*}
dY=f\left( \pi _{1}\left( Y\right) \right) d\gamma \text{, \ }Y_{0}=\mathbf{%
\xi }\in T^{\left( n\right) }\left( 
\mathbb{R}
^{e}\right) \text{,}
\end{equation*}%
then for any $0\leq s\leq t\leq T$, we have ($Y_{s,t}:=Y_{s}^{-1}\otimes
Y_{t}$, $\gamma _{s,t}:=\gamma _{s}^{-1}\otimes \gamma _{t}$)%
\begin{gather}
\left\Vert Y\right\Vert _{p-var,\left[ s,t\right] }\leq C_{p,\beta
,f,n}\left( \left\Vert \gamma \right\Vert _{p-var,\left[ s,t\right] }\vee
\left\Vert \gamma \right\Vert _{p-var,\left[ s,t\right] }^{p}\right) \text{,}
\label{estimate of solution of RDE} \\
\left\vert \pi _{1}\left( Y_{s,t}\right) -f\left( \pi _{1}\left(
Y_{s}\right) \right) \pi _{1}\left( \gamma _{s,t}\right) -\left( Dff\right)
\left( \pi _{1}\left( Y_{s}\right) \right) \pi _{2}\left( \gamma
_{s,t}\right) \right\vert \leq C_{p,\beta ,f}\left\Vert \gamma \right\Vert
_{p-var,\left[ s,t\right] }^{\beta +1}\text{,}
\label{taylor estimate of the first level of solution of RDE} \\
\left\vert \pi _{2}\left( Y_{s,t}\right) -f\left( \pi _{1}\left(
Y_{s}\right) \right) \otimes f\left( \pi _{1}\left( Y_{s}\right) \right) \pi
_{2}\left( \gamma _{s,t}\right) \right\vert \leq C_{p,\beta ,f}\left\Vert
\gamma \right\Vert _{p-var,\left[ s,t\right] }^{\beta +1}\vee \left\Vert
\gamma \right\Vert _{p-var,\left[ s,t\right] }^{2p}\text{.}
\label{taylor estimate of the second level of the solution of RDE}
\end{gather}
\end{theorem}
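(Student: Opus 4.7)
The plan is to reduce everything to the $(p,p/2)$-rough path formulation of Friz--Victoir. Writing $\gamma=(\gamma^{1},\gamma^{2})$ as in Notation \ref{Notation X1 and X2}, $\gamma^{1}$ is a weak geometric $p$-rough path and $\gamma^{2}$ has finite $p/2$-variation, with $\|\gamma^{1}\|_{p\text{-var}}^{p}+\|\gamma^{2}\|_{p/2\text{-var}}^{p/2}\asymp\|\gamma\|_{p\text{-var}}^{p}$. Since $\beta\in(p-1,2]$ and $Dff$ makes sense as a $Lip(\beta-1)$ map out of the $Lip(\beta)$ field $f$, Definition~\ref{Definition solution of RDE} places us exactly in the setting of Thm 12.6 and Cor 12.8 of Friz--Victoir.

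First I would handle the first-level statements. Apply Thm 12.6 of \cite{Peter Friz} to the first-level ODE $dy^{m}=f(y^{m})d\gamma^{1,m}+(Dff)(y^{m})d\gamma^{2,m}$ approximating $\pi_{1}(Y)$; taking the limit in $m$ gives the local estimate
\[
\|\pi_{1}(Y)\|_{p\text{-var},[s,t]}\leq C_{p,\beta,f}\bigl(\|\gamma\|_{p\text{-var},[s,t]}\vee\|\gamma\|_{p\text{-var},[s,t]}^{p}\bigr),
\]
and Cor 12.8 yields the Euler-type Taylor remainder \eqref{taylor estimate of the first level of solution of RDE} with exponent $\beta+1>p$, since the local remainder of the ODE approximations has this order uniformly in $m$ by the standard Davie/Friz--Victoir estimate.

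Next I would bootstrap to the higher levels. The key observation is that the full solution $Y$ in $T^{(n)}(\mathbb R^{e})$ equals $\xi\otimes S_{n}(\pi_{1}(Y))_{0,\cdot}$, i.e.\ the signature of the first-level solution. For $\pi_{2}(Y)$ one rewrites the dynamics as an RDE whose unknown is a tensor; on an interval $[s,t]$ one has the pointwise identity
\[
\pi_{2}(Y_{s,t})=\int_{s}^{t}\pi_{1}(Y_{s,u})\otimes f(\pi_{1}(Y_{u}))d\gamma_{u},
\]
and Taylor-expanding $\pi_{1}(Y_{s,u})\otimes f(\pi_{1}(Y_{u}))$ around $u=s$ together with \eqref{taylor estimate of the first level of solution of RDE} gives the leading term $f(\pi_{1}(Y_{s}))\otimes f(\pi_{1}(Y_{s}))\pi_{2}(\gamma_{s,t})$. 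The remainder has two contributions: one of order $\|\gamma\|_{p\text{-var},[s,t]}^{\beta+1}$ coming from the first-level Taylor remainder, and a quadratic-in-$\gamma$ contribution of order $\|\gamma\|_{p\text{-var},[s,t]}^{2p}$ coming from the product $\pi_{1}(\gamma)\otimes\pi_{1}(\gamma)$ terms in the Chen/signature expansion; taking the maximum yields \eqref{taylor estimate of the second level of the solution of RDE}. Estimate \eqref{estimate of solution of RDE} for the full $\|Y\|_{p\text{-var}}$ then follows by combining the first-level bound with \eqref{taylor estimate of the second level of the solution of RDE} and the definition of the homogeneous norm $\|\cdot\|=|\pi_{1}(\cdot)|+|\pi_{2}(\cdot)|^{1/2}$, since $|\pi_{2}(Y_{s,t})|^{1/2}\leq C(\|\gamma\|_{p\text{-var},[s,t]}\vee\|\gamma\|_{p\text{-var},[s,t]}^{p})$ by \eqref{taylor estimate of the second level of the solution of RDE}.

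The main obstacle I expect is honest bookkeeping of the local remainder in the second-level estimate: one must verify that the ``Davie step'' still closes when $\beta\in(p-1,2]$ rather than $\beta>p$, so that the Taylor expansion of $f(\pi_{1}(Y_{u}))$ around $\pi_{1}(Y_{s})$ is only first-order (using $Dff$ as the second-order coefficient) rather than second-order. Because $\beta+1>p$ is enough to absorb the error by the standard sewing/Young argument, the remainder indeed stays subadditive in the appropriate sense; this is precisely the content of the parenthetical remark in the statement that ``the second level estimation can be obtained by considering the rough differential equation of the signature of the solution'', so no new technology beyond Thm 12.6 and Cor 12.8 of \cite{Peter Friz} is required.
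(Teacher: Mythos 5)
Your proposal tracks the paper's (very terse) proof exactly: read off the first-level bound and Euler estimate from Friz--Victoir Thm 12.6 and Cor 12.8 in the $(p,p/2)$-formulation of Definition~\ref{Definition solution of RDE}, then obtain the second-level estimate by viewing the higher levels of $Y$ as the signature of $\pi_1(Y)$ and applying the same Euler estimate to the lifted RDE, which is precisely what the parenthetical remark before the statement is pointing at.

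Two small points worth tightening. First, you only directly control $\pi_1(Y_{s,t})$ and $\pi_2(Y_{s,t})$; to conclude $(\ref{estimate of solution of RDE})$ for $Y$ valued in $T^{(n)}(\mathbb{R}^{e})$ with $n>2$ you still need the extension estimate $(\ref{estimates of p-variation of enhancement})$ (equivalently Theorem~\ref{Theorem of enhancement}) to control the levels $3,\dots,n$ in terms of the first two; this is where the $n$-dependence of the constant $C_{p,\beta,f,n}$ enters. Second, your attribution of the $\left\Vert\gamma\right\Vert_{p-var,[s,t]}^{2p}$ term in $(\ref{taylor estimate of the second level of the solution of RDE})$ to ``$\pi_1(\gamma)\otimes\pi_1(\gamma)$ terms in the Chen expansion'' is off: that term is simply the large-scale crude bound $\left\vert\pi_2(Y_{s,t})\right\vert\leq C\left\Vert Y\right\Vert_{p-var,[s,t]}^{2}\leq C\left(\left\Vert\gamma\right\Vert_{p-var,[s,t]}\vee\left\Vert\gamma\right\Vert_{p-var,[s,t]}^{p}\right)^{2}$, which takes over from the local Taylor remainder $\left\Vert\gamma\right\Vert_{p-var,[s,t]}^{\beta+1}$ once $\left\Vert\gamma\right\Vert_{p-var,[s,t]}$ is of order one (note $2p\geq 4>\beta+1$, so the two regimes genuinely cross over). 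Neither gap changes the conclusion, but a clean write-up should make both explicit.
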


The Theorem below follows from Theorem 12.10 in \cite{Peter Friz} and
Theorem 3.1.3 in \cite{TerryQian}.

\begin{theorem}
\label{Theorem continuity in initial value}Suppose $\gamma \ $is a $p$-rough
path for some $p\in \lbrack 2,3)$ on $\left[ 0,T\right] $ taking value in $%
\left( T^{\left( 2\right) }\left( 
\mathbb{R}
^{d}\right) ,\left\Vert \cdot \right\Vert \right) $, and $f:%
\mathbb{R}
^{e}\rightarrow L\left( 
\mathbb{R}
^{d},%
\mathbb{R}
^{e}\right) $ is $Lip\left( \beta \right) $ for $\beta >p$. Suppose $Y^{i}$, 
$i=1,2$, are the solution to the rough differential equations:%
\begin{equation*}
dY^{i}=f\left( \pi _{1}\left( Y^{i}\right) \right) d\gamma \text{, }%
Y_{0}^{i}=\mathbf{\xi }^{i}\in T^{\left( n\right) }\left( 
\mathbb{R}
^{e}\right) \text{.}
\end{equation*}%
Then ($Y_{s,t}^{i}:=\left( Y_{s}^{i}\right) ^{-1}\otimes Y_{t}^{i}$)%
\begin{eqnarray*}
&&\max_{1\leq k\leq n}\sup_{0\leq s\leq t\leq T}\left\vert \pi _{k}\left(
Y_{s,t}^{1}\right) -\pi _{k}\left( Y_{s,t}^{2}\right) \right\vert \\
&\leq &C_{p,\beta ,f}\left\vert \pi _{1}\left( \mathbf{\xi }^{1}\right) -\pi
_{1}\left( \mathbf{\xi }^{2}\right) \right\vert \exp \left( C_{p,\beta
,f}\left( \left\Vert \gamma \right\Vert _{p-var,\left[ 0,T\right]
}^{p}\right) \right) .
\end{eqnarray*}
\end{theorem}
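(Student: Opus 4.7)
The plan is to exploit the fact that the RDE vector field depends only on $\pi_1(Y)$, so the higher-level components of the initial value $\xi^i$ do not influence the increments $Y^i_{s,t}$; only $\pi_1(\xi^i)$ matters. This will reduce the assertion to a first-level continuity estimate (as in Theorem 12.10 of \cite{Peter Friz}), which can then be lifted to higher levels through the stability of the enhancement map.

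First, I would introduce $z^i : [0,T] \to T^{(2)}(\mathbb{R}^e)$ as the unique solution of the rough differential equation $dz^i = f(\pi_1(z^i))\, d\gamma$ with initial condition $z^i_0 = (1,\pi_1(\xi^i),0)$. Using the uniqueness of the RDE (Theorem \ref{Theorem existence and uniqueness}) together with the uniqueness of the enhancement (Theorem \ref{Theorem of enhancement}), I would verify the identification $Y^i_t = \xi^i \otimes S_n(z^i)_{0,t}$ by checking that the right-hand side solves the level-$n$ RDE. From this identification
$$Y^i_{s,t} = (Y^i_s)^{-1}\otimes Y^i_t = S_n(z^i)_{s,t},$$
so the dependence on the initial datum is packaged entirely through $\pi_1(\xi^i)$, and the difference $\pi_k(Y^1_{s,t})-\pi_k(Y^2_{s,t})$ reduces to $\pi_k(S_n(z^1)_{s,t})-\pi_k(S_n(z^2)_{s,t})$.

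Second, I would apply Theorem 12.10 in \cite{Peter Friz} to the $T^{(2)}$-valued solutions $z^i$ to obtain, for $k=1,2$,
$$\sup_{0\leq s\leq t\leq T}|\pi_k(z^1_{s,t})-\pi_k(z^2_{s,t})| \leq C_{p,\beta,f}|\pi_1(\xi^1)-\pi_1(\xi^2)|\exp\bigl(C_{p,\beta,f}\|\gamma\|_{p-var,[0,T]}^p\bigr).$$
To extend to levels $k=3,\ldots,n$, I would invoke the Lipschitz continuity of the enhancement map $S_n$ with respect to the $d_p$-metric (Theorem 3.1.3 of \cite{TerryQian}), combined with the a priori bound $\|z^i\|_{p-var,[0,T]} \leq C(\|\gamma\|_{p-var,[0,T]} \vee \|\gamma\|_{p-var,[0,T]}^p)$ given by Theorem \ref{Theorem estimate of the solution of RDE}. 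The higher-level increments then differ by at most $C|\pi_1(\xi^1)-\pi_1(\xi^2)|$ times a polynomial in $\|\gamma\|_{p-var,[0,T]}$, which can be absorbed into a single exponential factor of the form $\exp(C\|\gamma\|_{p-var,[0,T]}^p)$.

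The hard part will be the bookkeeping required to show that the passage from first-level continuity to the higher levels preserves the clean form of the estimate, with only a single exponential in $\|\gamma\|_{p-var,[0,T]}^p$ rather than iterated exponentials. The standard device is to partition $[0,T]$ into finitely many sub-intervals on which $\|\gamma\|_{p-var}$ is bounded by a fixed small constant (so that all local enhancement estimates have factors of order one), and then telescope the increment differences across the partition. Once this is in place, the theorem is a direct combination of the first-level continuity in \cite{Peter Friz} and the enhancement stability in \cite{TerryQian}, as the statement indicates.
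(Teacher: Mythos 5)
Your proposal is correct and takes essentially the same approach as the paper, which offers no detailed proof and simply cites Theorem 12.10 of \cite{Peter Friz} (for the level-one and level-two Lipschitz dependence on the initial condition) together with Theorem 3.1.3 of \cite{TerryQian} (for continuity of the enhancement map) — precisely the two ingredients you combine. Your intermediate observation that $Y^i_{s,t}=S_n(z^i)_{s,t}$, so the increments depend on $\boldsymbol{\xi}^i$ only through $\pi_1(\boldsymbol{\xi}^i)$, is the (implicit) reduction that makes the citation valid, and your remark that the polynomial factor from enhancement stability is absorbed into the exponential is the right bookkeeping.
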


\subsection{It\^{o} signature and relation with It\^{o} SDE}

\begin{lemma}
\label{Lemma group-valued martingale}Suppose $Z$ is a continuous local
martingale on $[0,\infty )$ taking value in $%
\mathbb{R}
^{d}$. For integer $n\geq 1$, denote $\mathcal{I}_{n}\left( Z\right) $ as at 
$\left( \ref{Definition In(Z)}\right) $ (on page \pageref{Definition In(Z)})
and $S_{n}\left( \cdot \right) $ as in\ Notation \ref{Notation Sn(X)} (on
page \pageref{Notation Sn(X)}). Then%
\begin{equation}
S_{n}\left( \mathcal{I}_{2}\left( Z\right) \right) _{t}=\mathcal{I}%
_{n}\left( Z\right) _{t}\text{, \ }t\in \lbrack 0,\infty )\text{, }\forall
n\geq 1\text{, a.s..}
\label{inner relation between enhancement and Ito signature}
\end{equation}
\end{lemma}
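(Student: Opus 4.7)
The plan is to apply the uniqueness clause of Theorem \ref{Theorem of enhancement}: it suffices to verify, on a full-measure event, that $\mathcal{I}_n(Z)$ itself is a continuous path into $(T^{(n)}(\mathbb{R}^d),\|\cdot\|)$ whose projection to levels $1$ and $2$ equals $\mathcal{I}_2(Z)$ and which has finite $p$-variation for some $p>2$ on every compact interval. The cases $n=1,2$ are trivial from the conventions in Notation \ref{Notation Sn(X)}. For $n\geq 3$ the matching $\pi_k(\mathcal{I}_n(Z))=\pi_k(\mathcal{I}_2(Z))$ for $k=1,2$ is immediate from the definition of iterated It\^o integrals, so everything reduces to showing finite $p$-variation of $\mathcal{I}_n(Z)$ on each $[0,T]$, almost surely.

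For this $p$-variation bound I would first localize via stopping times $\sigma_N := N\wedge\inf\{t:|Z_t-Z_0|+\langle Z\rangle_t\geq N\}$, so that $Z^{\sigma_N}$ is a bounded martingale and each $\pi_k(\mathcal{I}_n(Z^{\sigma_N}))$ is a continuous $L^q$-martingale for every $q\geq 1$. Applying BDG and Doob inductively on the level $k$ yields the standard scaling
\begin{equation*}
E\Bigl[\sup_{s\leq u\leq t}\bigl|\pi_k(\mathcal{I}_n(Z^{\sigma_N}))_{s,u}\bigr|^{2q}\Bigr]\;\leq\;C_{q,k}\,E\bigl[(\langle Z\rangle_t-\langle Z\rangle_s)^{kq}\bigr],\qquad k=1,\dots,n,
\end{equation*}
for every $q\geq 1$. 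Viewing $(s,t)\mapsto\langle Z\rangle_t-\langle Z\rangle_s$ as a (random) control, a Garsia-Rodemich-Rumsey / Kolmogorov criterion in the spirit of Thm 14.12 \cite{Peter Friz} promotes these moment estimates to $\|\pi_k(\mathcal{I}_n(Z^{\sigma_N}))\|_{(p/k)\text{-var},[0,T]}<\infty$ a.s.\ for any $p>2$ and any $k=1,\dots,n$. By the definition of $\|\cdot\|$ on $T^{(n)}(\mathbb{R}^d)$ this gives $\|\mathcal{I}_n(Z^{\sigma_N})\|_{p\text{-var},[0,T]}<\infty$ a.s., and letting $N\to\infty$ removes the localization on each $[0,T]$.

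Having verified both hypotheses of Theorem \ref{Theorem of enhancement} on a full-measure event, uniqueness of the Lyons extension forces $S_n(\mathcal{I}_2(Z))_t=\mathcal{I}_n(Z)_t$ for all $t\in[0,\infty)$ and all $n\geq 1$, almost surely. The main obstacle is the $p$-variation step: the inductive BDG bookkeeping for higher-level It\^o iterated integrals and the subsequent promotion to pathwise $p$-variation need to be executed carefully, since, unlike the Stratonovich case (Theorem \ref{Theorem regulartiy of Stratonovich signature of M}), one cannot invoke a geometric rough path framework directly; nevertheless the scaling $\pi_k\sim\langle Z\rangle^{k/2}$ is exactly what one expects for a "$p$-rough" object with $p>2$, so the argument should be routine in spirit though bookkeeping-heavy in execution.
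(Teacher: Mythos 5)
Your overall structure — verify that $\mathcal{I}_n(Z)$ is itself a multiplicative path of finite $p$-variation with the right first two levels, then invoke uniqueness of the Lyons extension (Theorem \ref{Theorem of enhancement}) — is a legitimate strategy in principle, and the reduction to the $p$-variation estimate and the BDG moment bound $E\bigl[\sup_{s\leq u\leq t}|\pi_k(\mathcal{I}_n(Z))_{s,u}|^{2q}\bigr]\leq C\,E\bigl[(\langle Z\rangle_t-\langle Z\rangle_s)^{kq}\bigr]$ are both sound. But the step you dismiss as ``routine in spirit though bookkeeping-heavy'' is exactly where your argument has a genuine gap. You need, for each $k\geq 3$, almost-sure finiteness of the $(p/k)$-variation of $\pi_k(\mathcal{I}_n(Z))$ starting from a moment bound whose right-hand side involves the \emph{random} control $\langle Z\rangle_t-\langle Z\rangle_s$, not $(t-s)^\theta$. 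Garsia--Rodemich--Rumsey/Kolmogorov criteria do not apply off the shelf to a random modulus that is not independent of $Z$; the analogous passage in Friz--Victoir (Thm 14.9, and the BDG-type Thm 14.12 that you cite) is carried out for the \emph{Stratonovich} enhancement, and its proof leans hard on the fact that the Stratonovich lift takes values in the free nilpotent group $G^{(n)}$ (DDS/time-change tricks, ball-box/Carnot--Carath\'eodory estimates, equivalence of homogeneous norms). The It\^o lift $\mathcal{I}_n(Z)$ does not take values in $G^{(n)}$, so none of that geometric machinery transfers automatically, and re-proving it for the non-geometric lift would be a substantial new lemma — indeed, the BDG inequality for $\mathcal{I}_n(Z)$ (equation (\ref{BDG inequality for group-valued martingale}) of the paper) is derived in the paper \emph{as a consequence of} the present lemma, not the other way around, so your route risks circularity or at best replicates the harder half of that theorem from scratch.

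The paper avoids the problem with a different and more economical decomposition. It argues by induction on the level $k$, and for the inductive step it does not try to show $\pi_{k+1}(\mathcal{I}_{n}(Z))$ has finite $(p/(k{+}1))$-variation; instead it shows only that the iterated It\^o integral $Z^{It\hat o,k+1}_{s,t}$ has finite $(p/2)$-variation for every $p>2$ — a much weaker statement, and one that follows from the known regularity of the \emph{Stratonovich} signature via the It\^o--Stratonovich correction (equations (\ref{inner estimation for martingale 1})--(\ref{inner estimation for martingale 4})). It separately shows that $\pi_{k+1}(S_{k+1}(\mathcal{I}_2(Z)))$ has finite $(p/(k{+}1))$-variation from the rough-path extension estimate (\ref{estimates of higher level of extension with factorial decay}), hence a fortiori finite $(p/2)$-variation. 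Since the two multiplicative functionals agree up to level $k$ by the inductive hypothesis, their level-$(k{+}1)$ discrepancy is additive, $\varphi_t-\varphi_s=Z^{It\hat o,k+1}_{s,t}-\pi_{k+1}(S_{k+1}(\mathcal{I}_2(Z))_{s,t})$, and thus has finite $(p/2)$-variation for any $p\in(2,4)$, i.e.\ finite $q$-variation with $q<2$. The missing ingredient — without which $(p/2)$-variation for $p/2>1$ would not force $\varphi$ to be constant — is supplied by the first half of the paper's proof: $S_n(\mathcal{I}_2(Z))$ is shown (by the Riemann-product/dominated-convergence argument) to be a group-valued continuous martingale, so $\varphi$ is a continuous martingale, and a continuous martingale with finite $q$-variation for some $q<2$ has zero quadratic variation, hence is constant. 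This martingale-plus-low-variation trick is what replaces the direct $p$-variation estimate on $\mathcal{I}_n(Z)$ that your proposal would require and does not supply.
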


\begin{proof}
We prove $\left( \ref{inner relation between enhancement and Ito signature}%
\right) $ on $\left[ 0,T\right] $ for some $T>0$, for fixed $n\geq 1$ and
when $Z$ is a bounded continuous martingale. Then by properly stopping the
process and unionizing countably many null sets, we can prove $\left( \ref%
{inner relation between enhancement and Ito signature}\right) $.

Denote the filtration of $Z$ as $\left( \mathcal{F}_{t}\right) $. For $0\leq
s\leq t\leq T$, denote $S_{n}\left( \mathcal{I}_{2}\left( Z\right) \right)
_{s,t}:=S_{n}\left( \mathcal{I}_{2}\left( Z\right) \right) _{s}^{-1}\otimes
S_{n}\left( \mathcal{I}_{2}\left( Z\right) \right) _{t}$.

Firstly, we prove that $S_{n}\left( \mathcal{I}_{2}\left( Z\right) \right) $
is a group-valued martingale w.r.t. $\left( \mathcal{F}_{t}\right) $, i.e.
for any $0\leq s\leq t\leq T$, $E\left( S_{n}\left( \mathcal{I}_{2}\left(
Z\right) \right) _{s,t}|\mathcal{F}_{s}\right) =1\in T^{\left( n\right)
}\left( 
\mathbb{R}
^{d}\right) $. It is clear that, $S_{1}\left( \mathcal{I}_{2}\left( Z\right)
\right) =Z$ and $S_{2}\left( \mathcal{I}_{2}\left( Z\right) \right) =%
\mathcal{I}_{2}\left( Z\right) $ are martingales.

For integer $n\geq 2$, suppose $S_{n}\left( \mathcal{I}_{2}\left( Z\right)
\right) $ is a continuous martingale, we want to prove that $S_{n+1}\left( 
\mathcal{I}_{2}\left( Z\right) \right) $ is also a continuous martingale.
Based on $\left( \ref{estimates of uniform convergence of multiplicative
functional}\right) $ on page \pageref{estimates of uniform convergence of
multiplicative functional}, for any $0\leq s\leq t\leq T$, (with $0\in
\left( 
\mathbb{R}
^{d}\right) ^{\otimes \left( n+1\right) }$) 
\begin{eqnarray}
&&\pi _{n+1}\left( S_{n+1}\left( \mathcal{I}_{2}\left( Z\right) \right)
_{s,t}\right)  \label{inner convergence of discrete concatenations} \\
&=&\lim_{\left\vert D\right\vert \rightarrow 0,D=\left\{ t_{k}\right\}
\subset \left[ s,t\right] }\pi _{n+1}\left( \left( S_{n}\left( \mathcal{I}%
_{2}\left( Z\right) \right) _{t_{0},t_{1}}\oplus 0\right) \otimes \cdots
\otimes \left( S_{n}\left( \mathcal{I}_{2}\left( Z\right) \right)
_{t_{n-1},t_{n}}\oplus 0\right) \right) \text{,}  \notag
\end{eqnarray}%
and the error in $L^{1}$ is bounded by, for any $p>2$,%
\begin{eqnarray*}
&&C_{p,n}\lim_{\left\vert D\right\vert \rightarrow 0}E\left(
\sum_{k,t_{k}\in D}\left\Vert \mathcal{I}_{2}\left( Z\right) \right\Vert
_{p-var,\left[ t_{k},t_{k+1}\right] }^{n+1}\right) \\
&\leq &C_{p,n}\lim_{\left\vert D\right\vert \rightarrow 0}E\left( \left\Vert 
\mathcal{I}_{2}\left( Z\right) \right\Vert _{p-var,\left[ 0,T\right]
}^{n+1}\right) ^{\frac{p}{n+1}}E\left( \sup_{\left\vert t-s\right\vert \leq
\left\vert D\right\vert }\left\Vert \mathcal{I}_{2}\left( Z\right)
\right\Vert _{p-var,\left[ s,t\right] }^{n+1}\right) ^{1-\frac{p}{n+1}}\text{%
.}
\end{eqnarray*}%
Based on $\left( \ref{BDG and Doob}\right) $ and using that $Z$ is a bounded
martingale, we have%
\begin{eqnarray*}
E\left( \left\Vert \mathcal{I}_{2}\left( Z\right) \right\Vert _{p-var,\left[
0,T\right] }^{n+1}\right) &\leq &C_{n}\left( E\left( \left\Vert S_{2}\left(
Z\right) \right\Vert _{p-var,\left[ 0,T\right] }^{n+1}\right) +E\left(
\left\Vert \left\langle Z\right\rangle \right\Vert _{1-var,\left[ 0,T\right]
}^{\frac{n+1}{2}}\right) \right) \\
&\leq &C_{p,d,n}E\left( \left\vert Z_{T}-Z_{0}\right\vert ^{n+1}\right)
<\infty \text{.}
\end{eqnarray*}%
Thus, using dominated convergence theorem, the convergence at $\left( \ref%
{inner convergence of discrete concatenations}\right) $ is in $L^{1}$ and we
have:%
\begin{eqnarray*}
&&E\left( \pi _{n+1}\left( S_{n+1}\left( \mathcal{I}_{2}\left( Z\right)
\right) _{s,t}\right) \big|\mathcal{F}_{s}\right) \\
&=&\lim_{\left\vert D\right\vert \rightarrow 0,D=\left\{ t_{k}\right\}
_{k=0}^{K}\subset \left[ s,t\right] }\pi _{n+1}\left( E\left( \left(
S_{n}\left( \mathcal{I}_{2}\left( Z\right) \right) _{t_{0},t_{1}}\oplus
0\right) \otimes \cdots \otimes \left( S_{n}\left( \mathcal{I}_{2}\left(
Z\right) \right) _{t_{K-1},t_{K}}\oplus 0\right) \big|\mathcal{F}_{s}\right)
\right) \text{ a.s..}
\end{eqnarray*}%
While using the inductive hypothesis that $S_{n}\left( \mathcal{I}_{2}\left(
Z\right) \right) $ is a martingale, we have%
\begin{eqnarray*}
&&E\left( \left( S_{n}\left( \mathcal{I}_{2}\left( Z\right) \right)
_{t_{0},t_{1}}\oplus 0\right) \otimes \cdots \otimes \left( S_{n}\left( 
\mathcal{I}_{2}\left( Z\right) \right) _{t_{K-1},t_{K}}\oplus 0\right) \big|%
\mathcal{F}_{s}\right) \\
&=&E\left( \left( S_{n}\left( \mathcal{I}_{2}\left( Z\right) \right)
_{t_{0},t_{1}}\oplus 0\right) \otimes \cdots \otimes E\left( \left(
S_{n}\left( \mathcal{I}_{2}\left( Z\right) \right) _{t_{K-1},t_{K}}\oplus
0\right) \big|\mathcal{F}_{t_{K-1}}\right) \big|\mathcal{F}_{s}\right) \\
&=&E\left( \left( S_{n}\left( \mathcal{I}_{2}\left( Z\right) \right) \left(
Z\right) _{t_{0},t_{1}}\oplus 0\right) \otimes \cdots \otimes \left(
S_{n}\left( \mathcal{I}_{2}\left( Z\right) \right) _{t_{K-2},t_{K-1}}\oplus
0\right) \big|\mathcal{F}_{s}\right) \\
&=&\cdots =1\text{, a.s..}
\end{eqnarray*}%
Thus, for any $0\leq s\leq t\leq T$, (with $0\in \left( 
\mathbb{R}
^{d}\right) ^{\otimes \left( n+1\right) }$) 
\begin{eqnarray*}
E\left( S_{n+1}\left( \mathcal{I}_{2}\left( Z\right) \right) _{s,t}\big|%
\mathcal{F}_{s}\right) &=&E\left( \pi _{n+1}\left( S_{n+1}\left( \mathcal{I}%
_{2}\left( Z\right) \right) _{s,t}\right) \big|\mathcal{F}_{s}\right)
+E\left( S_{n}\left( \mathcal{I}_{2}\left( Z\right) \right) _{s,t}\oplus 0%
\text{ }\big|\mathcal{F}_{s}\right) \\
&=&1\in T^{\left( n+1\right) }\left( 
\mathbb{R}
^{d}\right) \text{.}
\end{eqnarray*}

Then we prove $\left( \ref{inner relation between enhancement and Ito
signature}\right) $. It is clear that $\left( \ref{inner relation between
enhancement and Ito signature}\right) $ holds for level $1$ and level $2$.
For the higher levels, we use mathematical induction. Suppose for some $%
k\geq 2$, we have%
\begin{eqnarray}
&&\left( 1,Z_{t}-Z_{0},\iint_{0<u_{1}<u_{2}<t}dZ_{u_{1}}\otimes
dZ_{u_{2}},\dots ,\idotsint\nolimits_{0<u_{1}<\cdots
<u_{k}<t}dZ_{u_{1}}\otimes \cdots \otimes dZ_{u_{k}}\right)
\label{inner inductive assumption} \\
&=&S_{k}\left( \mathcal{I}_{2}\left( Z\right) \right) _{t}\text{, }\forall
t\in \left[ 0,T\right] \text{, a.s..}  \notag
\end{eqnarray}%
Denote 
\begin{equation*}
Z_{s,t}^{It\hat{o},k}:=\idotsint\nolimits_{s<u_{1}<\cdots
<u_{k}<t}dZ_{u_{1}}\otimes \cdots \otimes dZ_{u_{k}}\text{, }\forall 0\leq
s\leq t\leq T\text{, }\forall k\geq 1\text{.}
\end{equation*}

Since $Z$ is a bounded continuous martingale, $t\mapsto Z_{0,t}^{It\hat{o}%
,k} $ is a continuous martingale adapted to the filtration of $Z$, and the
process $t\mapsto \left( Z_{0,t}^{It\hat{o},k},Z_{t}\right) $ is a
continuous martingale w.r.t. the filtration of $Z$. Then, based on Theorem %
\ref{Theorem regulartiy of Stratonovich signature of M} (on p\pageref%
{Theorem regulartiy of Stratonovich signature of M}), $\left( Z^{It\hat{o}%
,k},Z\right) $ can be enhanced by their Stratonovich integrals to a $p$%
-rough process for any $p>2$. As a result, we have%
\begin{equation*}
\sup_{D,D\subset \left[ 0,T\right] }\sum_{t_{j}\in D}\left\vert
\int_{t_{j}}^{t_{j+1}}\left( Z_{0,t}^{It\hat{o},k}-Z_{0,t_{j}}^{It\hat{o}%
,k}\right) \otimes \circ dZ_{t}\right\vert ^{\frac{p}{2}}<\infty \text{
a.s., }\forall p>2\text{.}
\end{equation*}%
Then using the relationship between It\^{o} integral and Stratonovich
integral, we have 
\begin{equation}
\sup_{D,D\subset \left[ 0,T\right] }\sum_{t_{j}\in D}\left\vert
\int_{t_{j}}^{t_{j+1}}\left( Z_{0,t}^{It\hat{o},k}-Z_{0,t_{j}}^{It\hat{o}%
,k}\right) \otimes dZ_{t}\right\vert ^{\frac{p}{2}}<\infty \text{ a.s., }%
\forall p>2\text{.}  \label{inner estimation for martingale 1}
\end{equation}%
Since the It\^{o} signature is a multiplicative functional, we have, for $%
t\in \left[ t_{j},t_{j+1}\right] $,%
\begin{equation}
Z_{0,t}^{It\hat{o},k}-Z_{0,t_{j}}^{It\hat{o},k}=%
\sum_{i=1}^{k-1}Z_{0,t_{j}}^{It\hat{o},k-i}\otimes Z_{t_{j},t}^{It\hat{o}%
,i}+Z_{t_{j},t}^{It\hat{o},k}\text{.}
\label{inner estimation for martingale 2}
\end{equation}%
For $i=1,\dots ,k-1$, we have%
\begin{equation*}
\sup_{D,D\subset \left[ 0,T\right] }\sum_{t_{j}\in D}\left\vert
\int_{t_{j}}^{t_{j+1}}Z_{0,t_{j}}^{It\hat{o},k-i}\otimes Z_{t_{j},t}^{It\hat{%
o},i}\otimes dZ_{t}\right\vert ^{\frac{p}{2}}\leq \sup_{t\in \left[ 0,T%
\right] }\left\vert Z_{0,t}^{It\hat{o},k-i}\right\vert ^{\frac{p}{2}%
}\sup_{D,D\subset \left[ 0,T\right] }\sum_{t_{j}\in D}\left\vert
\int_{t_{j}}^{t_{j+1}}Z_{t_{j},t}^{It\hat{o},i}\otimes dZ_{t}\right\vert ^{%
\frac{p}{2}}\text{.}
\end{equation*}%
Based on the inductive hypothesis $\left( \ref{inner inductive assumption}%
\right) $, we have, for any $p>2$ and $i=1,\dots ,k-1$, (since $i+1\geq 2$)%
\begin{equation}
\sup_{D,D\subset \left[ 0,T\right] }\sum_{t_{j}\in D}\left\vert
\int_{t_{j}}^{t_{j+1}}Z_{t_{j},t}^{It\hat{o},i}\otimes dZ_{t}\right\vert ^{%
\frac{p}{2}}\leq \left( \sup_{D,D\subset \left[ 0,T\right] }\sum_{t_{j}\in
D}\left\vert Z_{t_{j},t_{j+1}}^{It\hat{o},i+1}\right\vert ^{\frac{p}{i+1}%
}\right) ^{\frac{i+1}{2}}<\infty \text{ a.s., }\forall p>2\text{.}
\label{inner estimation for martingale 3}
\end{equation}%
Thus, combine $\left( \ref{inner estimation for martingale 1}\right) $, $%
\left( \ref{inner estimation for martingale 2}\right) $ and $\left( \ref%
{inner estimation for martingale 3}\right) $, we have%
\begin{equation}
\sup_{D,D\subset \left[ 0,T\right] }\sum_{t_{j}\in D}\left\vert
Z_{t_{j},t_{j+1}}^{It\hat{o},k+1}\right\vert ^{\frac{p}{2}}=\sup_{D,D\subset %
\left[ 0,T\right] }\sum_{t_{j}\in D}\left\vert
\int_{t_{j}}^{t_{j+1}}Z_{t_{j},t}^{It\hat{o},k}\otimes dZ_{t}\right\vert ^{%
\frac{p}{2}}<\infty \text{ a.s., }\forall p>2\text{.}
\label{inner estimation for martingale 4}
\end{equation}%
On the other hand, $\pi _{k+1}\left( S_{k+1}\left( \mathcal{I}_{2}\left(
Z\right) \right) _{t}\right) $ satisfies (based on $\left( \ref{estimates of
higher level of extension with factorial decay}\right) $ on page \pageref%
{estimates of higher level of extension with factorial decay}) 
\begin{equation}
\sup_{D,D\subset \left[ 0,T\right] }\sum_{t_{j}\in D}\left\vert \pi
_{k+1}\left( S_{k+1}\left( \mathcal{I}_{2}\left( Z\right) \right)
_{t_{j},t_{j+1}}\right) \right\vert ^{\frac{p}{k+1}}<\infty \text{ a.s., }%
\forall p>2\text{.}  \label{inner estimation for martingale 5}
\end{equation}%
Since both $\left( 1,Z,\dots ,Z^{It\hat{o},k},Z^{It\hat{o},k+1}\right) $ and 
$S_{k+1}\left( \mathcal{I}_{2}\left( Z\right) \right) $ are multiplicative
and $\left( \ref{inner inductive assumption}\right) $ holds, there exists a
process $\varphi :\left[ 0,T\right] \rightarrow \left( 
\mathbb{R}
^{d}\right) ^{\otimes \left( k+1\right) }$ such that%
\begin{equation}
Z_{s,t}^{It\hat{o},k+1}-\pi _{k+1}\left( S_{k+1}\left( \mathcal{I}_{2}\left(
Z\right) \right) _{s,t}\right) =\varphi _{t}-\varphi _{s}\text{, \ }\forall
0\leq s\leq t<\infty \text{, a.s..}  \label{inner expression of phi}
\end{equation}%
Moreover, $t\mapsto Z_{0,t}^{It\hat{o},k+1}$ is a continuous martingale,
and, as we proved above, $t\mapsto \pi _{k+1}\left( S_{k+1}\left( \mathcal{I}%
_{2}\left( Z\right) \right) _{0,t}\right) $ is also a continuous martingale.
Then based on $\left( \ref{inner expression of phi}\right) $, $t\mapsto
\left( \varphi _{t}-\varphi _{0}\right) $ is a continuous martingale
vanishing at $0$. Combined with $\left( \ref{inner estimation for martingale
4}\right) $ and $\left( \ref{inner estimation for martingale 5}\right) $
(with $k\geq 2$), we have 
\begin{equation*}
\sup_{D,D\subset \left[ 0,T\right] }\sum_{t_{j}\in D}\left\vert \varphi
_{t_{j+1}}-\varphi _{t_{j}}\right\vert ^{\frac{p}{2}}<\infty \text{ a.s.,
for any }p>2\text{.}
\end{equation*}%
Since a continuous martingale with finite $q$-variation for $q<2$ is a
constant, we have $\varphi _{t}\equiv \varphi _{0}$, and%
\begin{equation*}
Z_{t}^{It\hat{o},k+1}=\pi _{k+1}\left( S_{k+1}\left( \mathcal{I}_{2}\left(
Z\right) \right) _{t}\right) \text{, \ }\forall 0\leq t\leq T\text{, a.s..}
\end{equation*}
\end{proof}

\begin{proof}[Proof of Theorem \protect\ref{Theorem coincidence with
solution to SDE}]
\label{Proof of Theorem coincidence with solution to SDE}We only prove the
theorem when $Z$ is a continuous bounded martingale. Then by properly
stopping $Z$ and unionizing countably many null sets, we can prove Theorem %
\ref{Theorem coincidence with solution to SDE} for continuous local
martingales.

For $0\leq s\leq t\leq T$, we denote 
\begin{equation*}
Z_{s,t}^{1}:=Z_{t}-Z_{s}\text{, }Z_{s,t}^{2}:=\iint_{s<u_{1}<u_{2}<t}\circ
dZ_{u_{1}}\otimes \circ dZ_{u_{2}}\text{ and }\left\langle Z\right\rangle
_{s,t}:=\left\langle Z\right\rangle _{t}-\left\langle Z\right\rangle _{s}%
\text{.}
\end{equation*}%
Replace $\beta $ by $\beta \wedge 2$, and fix $p\in \left( 2,\beta +1\right) 
$. Define $\omega :\left\{ \left( s,t\right) |0\leq s\leq t\leq T\right\}
\rightarrow \overline{%
\mathbb{R}
^{+}}$ as 
\begin{equation*}
\omega \left( s,t\right) :=\left\Vert S_{2}\left( Z\right) \right\Vert
_{p-var,\left[ s,t\right] }^{p}+\left\Vert \left\langle Z\right\rangle
\right\Vert _{1-var,\left[ s,t\right] }^{\frac{p}{2}}\text{.}
\end{equation*}

Since $\beta >p-1$, the rough differential equation%
\begin{equation}
dY=f\left( \pi _{1}\left( Y\right) \right) d\mathcal{I}_{2}\left( Z\right) 
\text{, \ }Y_{0}=\mathbf{\xi }\in T^{\left( n\right) }\left( 
\mathbb{R}
^{e}\right) \text{,}  \label{Inner RDE driven by Ito signature}
\end{equation}%
has a solution. Denote $Y$ as a solution to the RDE $\left( \ref{Inner RDE
driven by Ito signature}\right) $, based on Euler estimate of solution of
RDE ($\left( \ref{taylor estimate of the first level of solution of RDE}%
\right) $ in Theorem \ref{Theorem estimate of the solution of RDE} on page %
\pageref{Theorem estimate of the solution of RDE}), we have the pathwise
estimate that, for any $0\leq s\leq t\leq T$, 
\begin{equation*}
\left\vert \pi _{1}\left( Y_{s,t}\right) -f\left( \pi _{1}\left(
Y_{s}\right) \right) Z_{s,t}^{1}-\left( Dff\right) \left( \pi _{1}\left(
Y_{s}\right) \right) \left( Z_{s,t}^{2}-\frac{1}{2}\left\langle
Z\right\rangle _{s,t}\right) \right\vert \leq C_{p,\beta ,f}\omega \left(
s,t\right) ^{\frac{\beta +1}{p}}\text{.}
\end{equation*}%
As a result, for almost every sample path of $Z$, any solution $Y$ to $%
\left( \ref{Inner RDE driven by Ito signature}\right) $ and any $0\leq s\leq
t\leq T$,%
\begin{eqnarray}
&&\pi _{1}\left( Y_{s,t}\right)  \label{inner limit} \\
&=&\lim_{\left\vert D\right\vert \rightarrow 0,D\subset \left[ s,t\right]
}\sum_{t_{k}\in D}\left( f\left( \pi _{1}\left( Y_{t_{k}}\right) \right)
Z_{t_{k},t_{k+1}}^{1}+\left( Dff\right) \left( \pi _{1}\left(
Y_{t_{k}}\right) \right) \left( Z_{t_{k},t_{k+1}}^{2}-\frac{1}{2}%
\left\langle Z\right\rangle _{t_{k},t_{k+1}}\right) \right) \text{.}  \notag
\end{eqnarray}%
Moreover, we have 
\begin{equation}
\lim_{\left\vert D\right\vert \rightarrow 0,D\subset \left[ s,t\right]
}\sum_{t_{k}\in D}\left( Dff\right) \left( \pi _{1}\left( Y_{t_{k}}\right)
\right) \left( Z_{t_{k},t_{k+1}}^{2}-\frac{1}{2}\left\langle Z\right\rangle
_{t_{k},t_{k+1}}\right) =0\text{ in }L^{2}\text{.}
\label{inner convergence in L2}
\end{equation}%
Indeed, since RDE solution is the limit of ODE solutions, and solution of
ODE can be recovered via Picard iteration ($f$ is $Lip\left( \beta \right) $
for $\beta >1$), $\pi _{1}\left( Y\right) $ is adapted to the filtration of $%
Z$. Since $\left( Dff\right) \left( \pi _{1}\left( Y\right) \right) $ is in $%
L^{4}$ (actually bounded, here we use $L^{4}$ for the convenience of second
level) and $Z$ is bounded, the cross terms in $L^{2}$ norm of $\left( \ref%
{inner convergence in L2}\right) $ vanish after taking conditional
expectation. Thus, for any $1\leq i,j\leq d$ and any $1\leq s\leq e$, we
have ($\left( Dff\right) ^{s}$ denotes the projection to the $s$th
coordinate of $%
\mathbb{R}
^{e}$) 
\begin{eqnarray*}
&&E\left( \left( \sum_{t_{k}\in D}\left( Dff\right) ^{s}\left( \pi
_{1}\left( Y_{t_{k}}\right) \right) \left( Z_{t_{k},t_{k+1}}^{2,i,j}-\frac{1%
}{2}\left\langle Z^{i},Z^{j}\right\rangle _{t_{k},t_{k+1}}\right) \right)
^{2}\right) \\
&=&\sum_{t_{k}\in D}E\left( \left( \left( Dff\right) ^{s}\left( \pi
_{1}\left( Y_{t_{k}}\right) \right) \left( Z_{t_{k},t_{k+1}}^{2,i,j}-\frac{1%
}{2}\left\langle Z^{i},Z^{j}\right\rangle _{t_{k},t_{k+1}}\right) \right)
^{2}\right) \\
&\leq &\sup_{t\in \left[ 0,T\right] }E\left( \left\vert \left( Dff\right)
^{s}\left( \pi _{1}\left( Y_{t}\right) \right) \right\vert ^{4}\right) ^{%
\frac{1}{2}}\sum_{t_{k}\in D}E\left( \left( Z_{t_{k},t_{k+1}}^{2,i,j}-\frac{1%
}{2}\left\langle Z^{i},Z^{j}\right\rangle _{t_{k},t_{k+1}}\right)
^{4}\right) ^{\frac{1}{2}} \\
&\leq &\sup_{t\in \left[ 0,T\right] }E\left( \left\vert \left( Dff\right)
^{s}\left( \pi _{1}\left( Y_{t}\right) \right) \right\vert ^{4}\right) ^{%
\frac{1}{2}}\sum_{t_{k}\in D}E\left( \left\Vert S_{2}\left( Z\right)
\right\Vert _{3-var,\left[ t_{k},t_{k+1}\right] }^{8}+\left\Vert
\left\langle Z\right\rangle \right\Vert _{1-var,\left[ t_{k},t_{k+1}\right]
}^{4}\right) ^{\frac{1}{2}}\text{ \ then use }\left( \ref{BDG and Doob}%
\right) \\
&\leq &C_{d}\sup_{t\in \left[ 0,T\right] }E\left( \left\vert \left(
Dff\right) ^{s}\left( \pi _{1}\left( Y_{t}\right) \right) \right\vert
^{4}\right) ^{\frac{1}{2}}\sum_{t_{k}\in D}E\left( \left\Vert \left\langle
Z\right\rangle \right\Vert _{1-var,\left[ t_{k},t_{k+1}\right] }^{4}\right)
^{\frac{1}{2}} \\
&\leq &C_{d}\left( \sup_{t\in \left[ 0,T\right] }E\left( \left\vert \left(
Dff\right) ^{s}\left( \pi _{1}\left( Y_{t}\right) \right) \right\vert
^{4}\right) ^{\frac{1}{2}}\right) E\left( \left\Vert \left\langle
Z\right\rangle \right\Vert _{1-var,\left[ 0,T\right] }^{4}\right) ^{\frac{1}{%
8}}E\left( \sup_{\left\vert t-s\right\vert \leq \left\vert D\right\vert
}\left\Vert \left\langle Z\right\rangle \right\Vert _{1-var,\left[ s,t\right]
}^{4}\right) ^{\frac{3}{8}}\text{.}
\end{eqnarray*}%
Since $Z$ is bounded, by using $\left( \ref{BDG and Doob}\right) $ on p%
\pageref{BDG and Doob}, we have%
\begin{equation*}
E\left( \left\Vert \left\langle Z\right\rangle \right\Vert _{1-var,\left[ 0,T%
\right] }^{4}\right) \leq E\left( \left\vert Z_{T}-Z_{0}\right\vert
^{8}\right) <\infty \text{.}
\end{equation*}%
Then by dominated convergence theorem, $\left( \ref{inner convergence in L2}%
\right) $ holds.

On the other hand, since $f\left( \pi _{1}\left( Y_{t}\right) \right) ~$is
bounded and adapted, and $Z$ is bounded, by using $\left( \ref{BDG and Doob}%
\right) $ on p\pageref{BDG and Doob}, we have%
\begin{equation*}
E\left( \int_{0}^{T}\left\vert f\left( \pi _{1}\left( Y_{u}\right) \right)
\right\vert ^{2}d\left\langle Z\right\rangle _{u}\right) \leq \left\vert
f\right\vert _{Lip\left( \beta \right) }^{2}E\left( \left\Vert \left\langle
Z\right\rangle \right\Vert _{1-var,\left[ 0,T\right] }\right) \leq
\left\vert f\right\vert _{Lip\left( \beta \right) }^{2}E\left( \left\vert
Z_{T}-Z_{0}\right\vert ^{2}\right) ^{\frac{1}{2}}<\infty \text{.}
\end{equation*}%
Thus,%
\begin{equation*}
\lim_{\left\vert D\right\vert \rightarrow 0,D\subset \left[ s,t\right]
}\sum_{t_{k}\in D}f\left( \pi _{1}\left( Y_{t_{k}}\right) \right)
Z_{t_{k},t_{k+1}}^{1}
\end{equation*}%
converge in $L^{2}$ to the It\^{o} integral $\int_{s}^{t}f\left( \pi
_{1}\left( Y_{u}\right) \right) dZ_{u}$. As a result, $\left( \ref{inner
limit}\right) $ converge a.s. and in $L^{2}$, with $\pi _{1}\left(
Y_{s,t}\right) $ the a.s. limit and the It\^{o} integral $%
\int_{s}^{t}f\left( \pi _{1}\left( Y_{u}\right) \right) dZ_{u}$ the $L^{2}$
limit. Therefore, since the null set can be chosen to be independent from $s$
and $t$, we have 
\begin{equation*}
\pi _{1}\left( Y_{s,t}\right) =\int_{s}^{t}f\left( \pi _{1}\left(
Y_{u}\right) \right) dZ_{u}\text{, \ }\forall 0\leq s\leq t\leq T\text{,
a.s..}
\end{equation*}%
Thus, $\pi _{1}\left( Y\right) $ is a strong continuous solution to the
stochastic differential equation:%
\begin{equation}
dy=f\left( y\right) dZ\text{, \ }y_{0}=\pi _{1}\left( \mathbf{\xi }\right)
\in 
\mathbb{R}
^{e}\text{.}  \label{SDE driven by continuous martingale}
\end{equation}%
On the other hand, since $f$ is $Lip\left( \beta \right) $, $\beta >1$,
based on Thm (2.1) on p375 \cite{Revuz and Yor}, the SDE $\left( \ref{SDE
driven by continuous martingale}\right) $ has a unique strong continuous
solution (denoted as $y$). Thus, $\pi _{1}\left( Y\right) $ exists uniquely
a.s. and%
\begin{equation*}
\pi _{1}\left( Y_{t}\right) =y_{t}\text{, }t\in \left[ 0,T\right] \text{,
a.s..}
\end{equation*}

For $\pi _{2}\left( Y\right) $, consider the rough differential equation on $%
\left[ 0,T\right] $:%
\begin{eqnarray}
dU_{t} &=&\left( f\left( \pi _{%
\mathbb{R}
^{e}}\left( U_{t}\right) \right) ,\left( \pi _{%
\mathbb{R}
^{e}}\left( U_{t}\right) -\pi _{%
\mathbb{R}
^{e}}\left( U_{0}\right) \right) \otimes f\left( \pi _{%
\mathbb{R}
^{e}}\left( U_{t}\right) \right) \right) d\mathcal{I}_{2}\left( Z\right) _{t}%
\text{, }  \label{inner RDE equations} \\
U_{0} &=&\left( 1,\left( \pi _{1}\left( \mathbf{\xi }\right) ,0\right)
,0\right) \in T^{\left( 2\right) }\left( 
\mathbb{R}
^{e}\oplus \left( 
\mathbb{R}
^{e}\right) ^{\otimes 2}\right) \text{.}  \notag
\end{eqnarray}%
Although the vector field for $U$ in $\left( \ref{inner RDE equations}%
\right) $ is only locally $Lip\left( \beta \right) $, $\beta >1$, its first
level has a global solution 
\begin{equation*}
\pi _{1}\left( U_{t}\right) =\left( \pi _{1}\left( Y_{t}\right) ,\pi
_{2}\left( Y_{0,t}\right) \right) \text{, \ }t\in \left[ 0,T\right] \text{.}
\end{equation*}%
By using $\left( \ref{estimate of solution of RDE}\right) $ in Theorem \ref%
{Theorem estimate of the solution of RDE} (on p\pageref{Theorem estimate of
the solution of RDE}), $\left( \ref{BDG and Doob}\right) $ on p\pageref{BDG
and Doob} and that $Z$ is bounded, one can prove that, for any $p>2$, (we
concentrate on the projection of $\pi _{1}\!\left( U\right) $ to $\left( 
\mathbb{R}
^{e}\right) ^{\otimes 2}$) 
\begin{eqnarray*}
&&\sup_{t\in \left[ 0,T\right] }E\left( \left\vert f\left( \pi _{1}\left(
Y_{t}\right) \right) \otimes f\left( \pi _{1}\left( Y_{t}\right) \right)
+\left( \pi _{1}\left( Y_{t}\right) -\pi _{1}\left( Y_{0}\right) \right)
\otimes \left( Dff\right) \left( \pi _{1}\left( Y_{t}\right) \right)
\right\vert ^{4}\right) \\
&\leq &8\left\vert f\right\vert _{Lip\left( \beta \right) }^{8}\left(
1+E\left( \left\Vert Y\right\Vert _{p-var,\left[ 0,T\right] }^{4}\right)
\right) <\infty \text{,}
\end{eqnarray*}%
and 
\begin{equation*}
E\left( \int_{0}^{T}\left\vert \pi _{1}\left( Y_{u}\right) -\pi _{1}\left(
Y_{0}\right) \right\vert ^{2}\left\vert f\right\vert _{Lip\left( \beta
\right) }^{2}d\left\langle Z\right\rangle _{u}\right) \leq \left\vert
f\right\vert _{Lip\left( \beta \right) }^{2}E\left( \left\Vert Y\right\Vert
_{p-var,\left[ 0,T\right] }^{2}\left\Vert \left\langle Z\right\rangle
\right\Vert _{1-var,\left[ 0,T\right] }\right) <\infty \text{.}
\end{equation*}%
Thus, by following similar reasoning as above, we have that $\pi _{2}\left(
Y_{0,t}\right) $ exists uniquely a.s. as a continuous martingale w.r.t. the
filtration of $Z$. On the other hand, consider $\alpha :\left\{ \left(
s,t\right) |0\leq s\leq t\leq T\right\} \rightarrow \left( 
\mathbb{R}
^{e}\right) ^{\otimes 2}$ defined as: ($y$ denotes the solution of SDE $%
\left( \ref{SDE driven by continuous martingale}\right) $)%
\begin{equation*}
\alpha _{s,t}:=\iint\nolimits_{s<u_{1}<u_{2}<t}dy_{u_{1}}\otimes dy_{u_{2}}%
\text{, }\forall 0\leq s\leq t\leq T\text{.}
\end{equation*}%
Then the process $t\mapsto \alpha _{0,t}$ is a continuous martingale w.r.t
the filtration of $Z$. Since both $\left( \pi _{1}\left( Y\right) ,\pi
_{2}\left( Y\right) \right) $ and $\left( y,\alpha \right) $ are
multiplicative and $\pi _{1}\left( Y\right) $ equals to $y$ a.s., there
exists a process $\varphi $ on $\left[ 0,T\right] $ such that%
\begin{equation*}
\varphi _{t}-\varphi _{s}=\pi _{2}\left( Y_{s,t}\right) -\alpha _{s,t}\text{%
, }\forall 0\leq s\leq t\leq T\text{, a.s..}
\end{equation*}%
On the other hand, since both $t\mapsto \pi _{2}\left( Y_{0,t}\right) $ and $%
t\mapsto \alpha _{0,t}$ are continuous martingales w.r.t. the filtration of $%
Z$, $t\mapsto \varphi _{t}-\varphi _{0}$ is also a continuous martingale
vanishing at $0$. Moreover, we have, (based on $\left( \ref{estimate of
solution of RDE}\right) $ in\ Theorem \ref{Theorem estimate of the solution
of RDE} on p\pageref{Theorem estimate of the solution of RDE}) 
\begin{equation}
\sup_{D\subset \left[ 0,T\right] }\sum_{k,t_{k}\in D}\left\vert \pi
_{2}\left( Y_{t_{k},t_{k+1}}\right) \right\vert ^{\frac{p}{2}}<\infty \text{
a.s., }\forall p>2\text{.}  \label{inner estimation of p-var of pi_2(Y)}
\end{equation}%
and $\alpha $ satisfies (Theorem \ref{Theorem regulartiy of Stratonovich
signature of M} on p\pageref{Theorem regulartiy of Stratonovich signature of
M})%
\begin{equation}
\sup_{D\subset \left[ 0,T\right] }\sum_{k,t_{k}\in D}\left\vert \alpha
_{t_{k},t_{k+1}}\right\vert ^{\frac{p}{2}}<\infty \text{ a.s., }\forall p>2.
\label{inner estimation of p-var of alpha}
\end{equation}%
Therefore, $t\mapsto \left( \varphi _{t}-\varphi _{0}\right) $ is a
continuous martingale vanishing at $0$ with finite $2^{-1}p$-variation for
any $p>2$. Thus, we have 
\begin{eqnarray*}
\varphi _{t} &\equiv &\varphi _{0}\text{, \ }\forall 0\leq t\leq T\text{,
a.s.} \\
\text{and }\pi _{2}\left( Y_{s,t}\right) &=&\alpha _{s,t}\text{, \ }\forall
0\leq s\leq t\leq T\text{, a.s..}
\end{eqnarray*}%
Therefore, the solution $Y$ of the RDE $\left( \ref{Inner RDE driven by Ito
signature}\right) $ has the explicit expression: 
\begin{equation*}
S_{2}\left( Y\right) _{0,t}=\left( 1,y_{t}-\pi _{1}\left( \mathbf{\xi }%
\right) ,\iint\nolimits_{0<u_{1}<u_{2}<t}dy_{u_{1}}\otimes dy_{u_{2}}\right) 
\text{, }t\in \left[ 0,T\right] \text{, a.s..}
\end{equation*}

Using the Theorem of enhancement (Theorem \ref{Theorem of enhancement} on p%
\pageref{Theorem of enhancement}) and Lemma \ref{Lemma group-valued
martingale} on p\pageref{Lemma group-valued martingale}, we have the
explicit expression that%
\begin{equation*}
S_{n}\left( Y\right) _{t}=\mathbf{\xi }\otimes S_{n}\left( Y\right) _{0,t}=%
\mathbf{\xi }\otimes S_{n}\left( \mathcal{I}_{2}\left( y\right) \right)
_{0,t}=\mathbf{\xi }\otimes \mathcal{I}_{n}\left( y\right) _{0,t}\text{, }%
t\in \left[ 0,T\right] \text{, }\forall n\geq 1\text{, a.s.,}
\end{equation*}%
where $y$ denotes the solution to the It\^{o} stochastic differential
equation $\left( \ref{SDE driven by continuous martingale}\right) $ and $%
\mathcal{I}_{n}\left( y\right) $ is the It\^{o} signature of $y$ defined at $%
\left( \ref{Definition In(Z)}\right) $ on p\pageref{Definition In(Z)}.
\end{proof}

\begin{lemma}
\label{Lemma strong convergence of bracket process}Suppose $Z$ is a
continuous local martingale on $[0,\infty )$ taking value in $%
\mathbb{R}
^{d}$. For $T>0$ and finite partition $D=\left\{ t_{k}\right\} _{k=0}^{n}$
of $\left[ 0,T\right] $, define $\left\langle Z\right\rangle ^{D}:\left[ 0,T%
\right] \rightarrow \left( 
\mathbb{R}
^{d}\right) ^{\otimes 2}$ as%
\begin{equation}
\left\langle Z\right\rangle _{0}^{D}=0\text{ and }\left\langle
Z\right\rangle _{t}^{D}:=\frac{t-t_{k}}{t_{k+1}-t_{k}}\left(
Z_{t_{k+1}}-Z_{t_{k}}\right) ^{\otimes 2}+\left\langle Z\right\rangle
_{t_{k}}^{D}\text{, }t\in \left[ t_{k},t_{k+1}\right] \text{, }t_{k}\in D%
\text{,}  \label{definition of bracket Z^D}
\end{equation}%
Then for any $p>2$, 
\begin{equation}
\lim_{\left\vert D\right\vert \rightarrow 0,D\subset \left[ 0,T\right]
}\left\Vert \left\langle Z\right\rangle ^{D}-\left\langle Z\right\rangle
\right\Vert _{\frac{p}{2}-var,\left[ 0,T\right] }=0\text{ in prob.,}
\label{strong convergence of bracket process}
\end{equation}%
and the convergence in $\left( \ref{strong convergence of bracket process}%
\right) $\ is in $L^{\frac{q}{2}}$ if $\left\langle Z\right\rangle _{T}$ is
in $L^{\frac{q}{2}}$ for some $q\geq 1$.
\end{lemma}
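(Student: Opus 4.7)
The plan is to combine the classical fact that quadratic variation along partitions converges uniformly to $\langle Z\rangle$ with an interpolation inequality that upgrades this uniform convergence to convergence in $p/2$-variation. By the usual localization argument (stopping $Z$ at $\tau_n:=\inf\{t:|Z_t|+|\langle Z\rangle_t|\geq n\}\wedge T$ and taking a countable union of null sets), I would first reduce to the case where $Z$ is a bounded continuous martingale with $\langle Z\rangle_T$ bounded; the general local martingale and $L^{q/2}$ statements are recovered at the end.

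Next I would establish the uniform convergence $\sup_{t\in[0,T]}|\langle Z\rangle^D_t-\langle Z\rangle_t|\to 0$ in probability. At the partition points $\langle Z\rangle^D$ agrees with the Riemann sum $\sum_{t_k\leq t}(Z_{t_{k+1}}-Z_{t_k})^{\otimes 2}$, which converges uniformly in probability to $\langle Z\rangle$ by the classical theorem on quadratic variation of continuous martingales; between partition points the interpolant is controlled by the uniform continuity of $\langle Z\rangle$. For the $L^{q/2}$ statement under $\langle Z\rangle_T\in L^{q/2}$, one applies the discrete-time BDG inequality to the skeleton martingale $(Z_{t_k})$ to see that $\sum_k|Z_{t_{k+1}}-Z_{t_k}|^2$ is bounded in $L^{q/2}$ uniformly in $D$ by a multiple of $E(\text{tr}\langle Z\rangle_T^{q/2})$, which supplies the uniform integrability needed to upgrade convergence in probability to $L^{q/2}$ convergence of the sup-norm.

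The third ingredient is a uniform $1$-variation bound. Because $\langle Z\rangle^D$ is piecewise linear with matrix increments $(Z_{t_{k+1}}-Z_{t_k})^{\otimes 2}$,
\begin{equation*}
\|\langle Z\rangle^D\|_{1\text{-}var,[0,T]}\leq C_d\sum_{t_k\in D}|Z_{t_{k+1}}-Z_{t_k}|^2,
\end{equation*}
and for $\langle Z\rangle$ itself the polarization identity gives $\|\langle Z\rangle\|_{1\text{-}var,[0,T]}\leq C_d\,\mathrm{tr}\langle Z\rangle_T$. Both are thus bounded almost surely (and, under the extra hypothesis, in $L^{q/2}$) uniformly in $D$, by the same BDG argument as above.

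Finally I would apply the elementary interpolation inequality: for any path $\alpha$ on $[0,T]$ and $r\geq 1$,
\begin{equation*}
\|\alpha\|_{r\text{-}var,[0,T]}^{\,r}\leq (2\|\alpha\|_{\infty-var,[0,T]})^{r-1}\,\|\alpha\|_{1\text{-}var,[0,T]}.
\end{equation*}
Applied to $\alpha:=\langle Z\rangle^D-\langle Z\rangle$ with $r=p/2>1$, the second factor is uniformly bounded and the first tends to zero in probability, yielding (\ref{strong convergence of bracket process}). For the $L^{q/2}$ refinement, raise this inequality to the $q/p$ power and apply H\"older with exponents $p/(p-2)$ and $p/2$ to split into $E(\|\alpha\|_\infty^{q/2})^{(p-2)/p}\,E(\|\alpha\|_{1\text{-}var}^{q/2})^{2/p}$; the first factor tends to zero by the previous step and the second is bounded uniformly in $D$. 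The main obstacle is really the careful bookkeeping around $L^{q/2}$ uniform integrability of the Riemann-sum quadratic variations, which I expect to rely crucially on the discrete-time BDG inequality being applicable to the skeleton martingale; the uniform convergence on partition points and the interpolation itself are routine.
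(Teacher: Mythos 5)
Your overall strategy — bound the $1$-variation uniformly, show uniform (sup-norm) convergence to zero, then interpolate to get $p/2$-variation convergence — is the same architecture the paper uses. The paper applies the interpolation to the discrete-time martingale $\lambda(D)_{t_k}=\sum_{j<k}(Z_{t_{j+1}}-Z_{t_j})^{\otimes 2}-\langle Z\rangle_{t_k}$ together with a separate treatment of the within-subinterval oscillations of $\langle Z\rangle^D$ and $\langle Z\rangle$, whereas you interpolate $\langle Z\rangle^D-\langle Z\rangle$ directly; this difference is cosmetic, since the triangle inequality and the piecewise-linear structure reconcile the two, and your Hölder split $E\bigl(\|\alpha\|_{p/2\text{-}var}^{q/2}\bigr)\leq E\bigl(\|\alpha\|_\infty^{q/2}\bigr)^{(p-2)/p}E\bigl(\|\alpha\|_{1\text{-}var}^{q/2}\bigr)^{2/p}$ is exactly what the paper does for the $\lambda(D)$ term.

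There is, however, a genuine gap in your $L^{q/2}$ upgrade. You argue that $E\bigl[\|\alpha\|_\infty^{q/2}\bigr]\to 0$ because $\sum_k|Z_{t_{k+1}}-Z_{t_k}|^2$ is bounded in $L^{q/2}$ uniformly in $D$, and that this "supplies the uniform integrability" needed to pass from convergence in probability to $L^{q/2}$ convergence of the sup-norm. But boundedness in $L^1$ of the family $\bigl\{\|\alpha\|_\infty^{q/2}\bigr\}_D$ does \emph{not} imply uniform integrability of that family — you would need boundedness in $L^{1+\delta}$ for some $\delta>0$, or a de la Vallée Poussin type bound, neither of which follows from $\langle Z\rangle_T\in L^{q/2}$ alone. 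The paper avoids this trap by proving $E\bigl[\|\lambda(D)\|_{\infty\text{-}var,[0,T]}^{q/2}\bigr]\to 0$ directly: it bounds this quantity via the discrete BDG inequality and Hölder by expressions of the form $E\bigl(\sup_{|t-s|\leq|D|}|Z_t-Z_s|^q\bigr)^{1/4}E\bigl(\|Z\|_{3\text{-}var}^q\bigr)^{3/4}$ plus the analogous $\langle Z\rangle$-term, and then lets $|D|\to 0$ by dominated convergence using that $\sup_{|t-s|\leq|D|}|Z_t-Z_s|$ is dominated by the integrable $2\sup_t|Z_t-Z_0|$ and tends to zero pathwise by continuity. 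You should replace the uniform-integrability appeal by this dominated-convergence argument (or an equivalent one showing directly that $E\bigl[\|\alpha\|_\infty^{q/2}\bigr]\to 0$).

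A secondary issue is the order of your localization. You propose to first reduce to $Z$ and $\langle Z\rangle$ bounded, and then "recover" the $L^{q/2}$ statement afterwards — but passing from $L^{q/2}$ convergence of the stopped processes back to the unstopped process is not automatic. The paper instead proves the $L^{q/2}$ convergence directly under the standing hypothesis $\langle Z\rangle_T\in L^{q/2}$ (no boundedness assumption is needed, as BDG already controls $Z$ in terms of $\langle Z\rangle_T$), and only uses localization to deduce the weaker in-probability statement for a general continuous local martingale. Reversing the order this way closes the argument cleanly.
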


\begin{proof}
We assume that $\left\langle Z\right\rangle _{T}$ is in $L^{\frac{q}{2}}$
for some $q\geq 1$. Then by properly stopping $Z$, one can prove $\left( \ref%
{strong convergence of bracket process}\right) $ for continuous local
martingales.

For finite partition $D=\left\{ t_{k}\right\} _{k=0}^{n}\subset \left[ 0,T%
\right] $, we define discrete process%
\begin{equation}
\lambda \!\left( D\right) _{0}=0\text{ and }\lambda \!\left( D\right)
_{t_{k}}=\sum_{j=0}^{k-1}\left( Z_{t_{j+1}}-Z_{t_{j}}\right) ^{\otimes
2}-\left\langle Z\right\rangle _{t_{k}}\text{, }t_{k}\in D\text{, }k\geq 1%
\text{.}  \label{inner definition of lamdaD}
\end{equation}%
Then, with $\left( \mathcal{F}_{s}\right) $ denoting the filtration of $Z$, $%
\lambda \!\left( D\right) $ is a discrete martingale w.r.t. $\left( \mathcal{%
F}_{t_{k}}\right) _{k=0}^{n}$.

It can be checked that $\lambda \!\left( D\right) _{t_{k}}=\left\langle
Z\right\rangle _{t_{k}}^{D}-\left\langle Z\right\rangle _{t_{k}}$, $\forall
t_{k}\in D$. For any $0\leq s\leq t\leq T$, we decompose $\left[ s,t\right] $
as $[s,t_{k_{1}})\sqcup \lbrack t_{k_{1}},t_{k_{2}})\sqcup \left[ t_{k_{2}},t%
\right] $ with $t_{k_{1}-1}<s$ and $t_{k_{2}+1}>t$, and get%
\begin{align}
& \left\Vert \left\langle Z\right\rangle ^{D}-\left\langle Z\right\rangle
\right\Vert _{\frac{p}{2}-var,\left[ 0,T\right] }^{\frac{q}{2}}  \notag \\
& \leq C_{p,q}\left( \left\Vert \lambda \!\left( D\right) \right\Vert _{%
\frac{p}{2}-var,\left[ 0,T\right] }^{\frac{q}{2}}+\left(
\sum_{k=0}^{n-1}\left\Vert \left\langle Z\right\rangle ^{D}\right\Vert _{%
\frac{p}{2}-var,\left[ t_{k},t_{k+1}\right] }^{\frac{p}{2}}\right) ^{\frac{q%
}{p}}+\left( \sum_{k=0}^{n-1}\left\Vert \left\langle Z\right\rangle
\right\Vert _{\frac{p}{2}-var,\left[ t_{k},t_{k+1}\right] }^{\frac{p}{2}%
}\right) ^{\frac{q}{p}}\right)  \label{inner separate estimate}
\end{align}%
Then we estimate the three terms in $\left( \ref{inner separate estimate}%
\right) $ separately.

For the first term in $\left( \ref{inner separate estimate}\right) $, based
on the definition of $\lambda \!\left( D\right) $ at $\left( \ref{inner
definition of lamdaD}\right) $, we have ($Z=\left( Z^{1},Z^{2},\dots
,Z^{d}\right) $) 
\begin{eqnarray}
&&E\left( \left\Vert \lambda \!\left( D\right) \right\Vert _{1-var,\left[ 0,T%
\right] }^{\frac{q}{2}}\right)  \label{inner bound on 1-var 1} \\
&\leq &C_{d,q}\left( \sum_{i=1}^{d}E\left( \left( \sum_{k=0}^{n-1}\left(
Z_{t_{k+1}}^{i}-Z_{t_{k}}^{i}\right) ^{2}\right) ^{\frac{q}{2}}\right)
+E\left( \left\Vert \left\langle Z\right\rangle \right\Vert _{1-var,\left[
0,T\right] }^{\frac{q}{2}}\right) \right)  \notag \\
&\leq &C_{d,q}\left( \sum_{i=1}^{d}E\left( \left\vert \sum_{k=0}^{n-1}\left(
Z_{t_{k+1}}^{i}-Z_{t_{k}}^{i}\right) ^{2}-\left\langle Z^{i}\right\rangle
_{T}\right\vert ^{\frac{q}{2}}\right) +E\left( \left\Vert \left\langle
Z\right\rangle \right\Vert _{1-var,\left[ 0,T\right] }^{\frac{q}{2}}\right)
\right) \text{.}  \notag
\end{eqnarray}%
For $i=1,2,\dots ,d$,\ the discrete processes $\lambda ^{i}\!\left( D\right) 
$ defined as 
\begin{equation*}
\lambda ^{i}\!\left( D\right) _{0}=0\text{ and }\lambda ^{i}\!\left(
D\right) _{t_{k}}:=\sum_{j=0}^{k}\left( \left(
Z_{t_{j+1}}^{i}-Z_{t_{j}}^{i}\right) ^{2}-\left\langle Z^{i}\right\rangle
_{t_{j},t_{j+1}}\right) \text{, }t_{k}\in D\text{, }k\geq 1\text{,}
\end{equation*}%
are discrete martingales w.r.t. $\left( \mathcal{F}_{t_{k}}\right)
_{k=0}^{n} $. Thus, by using BDG inequality, we have%
\begin{eqnarray}
E\left( \left\vert \sum_{k=0}^{n-1}\left(
Z_{t_{k+1}}^{i}-Z_{t_{k}}^{i}\right) ^{2}-\left\langle Z^{i}\right\rangle
_{0,T}\right\vert ^{\frac{q}{2}}\right) &\leq &E\left( \left\Vert \lambda
^{i}\!\left( D\right) \right\Vert _{\infty -var,\left[ 0,T\right] }^{\frac{q%
}{2}}\right) \leq C_{q}E\left( \left\vert \left\langle \lambda ^{i}\!\left(
D\right) \right\rangle _{T}\right\vert ^{\frac{q}{2}}\right)
\label{inner discrete BDG} \\
&\leq &C_{q}E\left( \left( \sum_{k=0}^{n-1}\left( \left(
Z_{t_{k+1}}^{i}-Z_{t_{k}}^{i}\right) ^{4}+\left\langle Z^{i}\right\rangle
_{t_{k},t_{k+1}}^{2}\right) \right) ^{\frac{q}{4}}\right)  \notag \\
&\leq &C_{q}\left( E\left( \left\Vert Z\right\Vert _{4-var,\left[ 0,T\right]
}^{q}\right) +E\left( \left\Vert \left\langle Z\right\rangle \right\Vert
_{1-var,\left[ 0,T\right] }^{\frac{q}{2}}\right) \right) \text{.}  \notag
\end{eqnarray}%
Thus, by using $\left( \ref{inner discrete BDG}\right) $ and $\left( \ref%
{BDG and Doob for local}\right) $ (on page \pageref{BDG and Doob}), we
continue with $\left( \ref{inner bound on 1-var 1}\right) $, and get%
\begin{eqnarray}
E\left( \left\Vert \lambda \!\left( D\right) \right\Vert _{1-var,\left[ 0,T%
\right] }^{\frac{q}{2}}\right) &\leq &C_{d,q}\left( E\left( \left\Vert
Z\right\Vert _{4-var,\left[ 0,T\right] }^{q}\right) +E\left( \left\Vert
\left\langle Z\right\rangle \right\Vert _{1-var,\left[ 0,T\right] }^{\frac{q%
}{2}}\right) \right)  \label{inner bound on 1-var} \\
&\leq &C_{d,q}E\left( \left\vert \left\langle Z\right\rangle _{T}\right\vert
^{\frac{q}{2}}\right) <\infty \text{.}  \notag
\end{eqnarray}%
On the other hand, since $\lambda \left( D\right) $ is a discrete
martingale, by using BDG inequality and H\"{o}lder inequality, we get%
\begin{eqnarray}
&&E\left( \left\Vert \lambda \!\left( D\right) \right\Vert _{\infty -var, 
\left[ 0,T\right] }^{\frac{q}{2}}\right)  \label{inner estimation of lambdaD}
\\
&\leq &C_{d,q}E\left( \left\Vert \left\langle \lambda \!\left( D\right)
\right\rangle \right\Vert _{1-var,\left[ 0,T\right] }^{\frac{q}{4}}\right) 
\notag \\
&\leq &C_{d,q}\left( E\left( \left( \sum_{k=0}^{n-1}\left\vert
Z_{t_{k+1}}-Z_{t_{k}}\right\vert ^{4}\right) ^{\frac{q}{4}}\right) +E\left(
\left( \sum_{k=0}^{n-1}\left\vert \left\langle Z\right\rangle
_{t_{k},t_{k+1}}\right\vert ^{2}\right) ^{\frac{q}{4}}\right) \right)  \notag
\\
&\leq &C_{d,q}\left( E\left( \sup_{\left\vert t-s\right\vert \leq \left\vert
D\right\vert }\left\vert Z_{t}-Z_{s}\right\vert ^{q}\right) ^{\frac{1}{4}%
}E\left( \left\Vert Z\right\Vert _{3-var,\left[ 0,T\right] }^{q}\right) ^{%
\frac{3}{4}}+E\left( \sup_{\left\vert t-s\right\vert \leq \left\vert
D\right\vert }\left\vert \left\langle Z\right\rangle _{s,t}\right\vert ^{%
\frac{q}{2}}\right) ^{\frac{1}{2}}E\left( \left\Vert \left\langle
Z\right\rangle \right\Vert _{1-var,\left[ 0,T\right] }^{\frac{q}{2}}\right)
^{\frac{1}{2}}\right) \text{.}  \notag
\end{eqnarray}%
Then, by using $\left( \ref{BDG and Doob for local}\right) $ on page \pageref%
{BDG and Doob for local}, we get%
\begin{equation*}
E\left( \left\Vert Z\right\Vert _{3-var,\left[ 0,T\right] }^{q}\right) \leq
C_{d,q}E\left( \left\vert \left\langle Z\right\rangle _{T}\right\vert ^{%
\frac{q}{2}}\right) <\infty \text{.}
\end{equation*}%
On the other hand,%
\begin{equation*}
E\left( \left\Vert \left\langle Z\right\rangle \right\Vert _{1-var,\left[ 0,T%
\right] }^{\frac{q}{2}}\right) \leq C_{d,q}E\left( \left\vert \left\langle
Z\right\rangle _{T}\right\vert ^{\frac{q}{2}}\right) <\infty \text{.}
\end{equation*}%
Thus, we continue with $\left( \ref{inner estimation of lambdaD}\right) $,
and by dominated convergence theorem, we get%
\begin{equation}
\lim_{\left\vert D\right\vert \rightarrow 0}E\left( \left\Vert \lambda
\!\left( D\right) \right\Vert _{\infty -var,\left[ 0,T\right] }^{\frac{q}{2}%
}\right) =0\text{.}  \label{inner convergence in uniform norm}
\end{equation}%
Combining $\left( \ref{inner bound on 1-var}\right) $ and $\left( \ref{inner
convergence in uniform norm}\right) $, by interpolating between $1$%
-variation and uniform norm, we get that, for any $q\geq 1$ and $p>2$,%
\begin{equation*}
\lim_{\left\vert D\right\vert \rightarrow 0}E\left( \left\Vert \lambda
\!\left( D\right) \right\Vert _{\frac{p}{2}-var,\left[ 0,T\right] }^{\frac{q%
}{2}}\right) =0\text{.}
\end{equation*}

For the two terms left in $\left( \ref{inner separate estimate}\right) $, we
take $\left\langle Z\right\rangle ^{D}$ as an example. The estimation of $%
\left\langle Z\right\rangle $ is similar. Based on the definition of $%
\left\langle Z\right\rangle ^{D}$, we have%
\begin{equation*}
\sum_{k=0}^{n-1}\left\Vert \left\langle Z\right\rangle ^{D}\right\Vert _{%
\frac{p}{2}-var,\left[ t_{k},t_{k+1}\right] }^{\frac{p}{2}}\leq
\sum_{k=0}^{n-1}\left\vert Z_{t_{k+1}}-Z_{t_{k}}\right\vert ^{p}\leq \left(
\sup_{\left\vert t-s\right\vert \leq \left\vert D\right\vert }\left\vert
Z_{t}-Z_{s}\right\vert ^{\frac{p-2}{2}}\right) \left\Vert Z\right\Vert _{%
\frac{p+2}{2}-var,\left[ 0,T\right] }^{\frac{p+2}{2}}\text{.}
\end{equation*}%
Thus, by using H\"{o}lder inequality,%
\begin{equation}
E\left( \left( \sum_{k=0}^{n-1}\left\Vert \left\langle Z\right\rangle
^{D}\right\Vert _{\frac{p}{2}-var,\left[ t_{k},t_{k+1}\right] }^{\frac{p}{2}%
}\right) ^{\frac{q}{p}}\right) \leq E\left( \sup_{\left\vert t-s\right\vert
\leq \left\vert D\right\vert }\left\vert Z_{t}-Z_{s}\right\vert ^{q}\right)
^{\frac{p-2}{2p}}E\left( \left\Vert Z\right\Vert _{\frac{p+2}{2}-var,\left[
0,T\right] }^{q}\right) ^{\frac{p+2}{2p}}\text{.}
\label{inner estimation of p/2-var 1}
\end{equation}%
Thus, by using $\left( \ref{BDG and Doob for local}\right) $ (on p\pageref%
{BDG and Doob for local}) we have, for any $p>2$, 
\begin{equation*}
E\left( \left\Vert Z\right\Vert _{\frac{p+2}{2}-var,\left[ 0,T\right]
}^{q}\right) \leq C_{p,q,d}E\left( \left\vert \left\langle Z\right\rangle
_{T}\right\vert ^{\frac{q}{2}}\right) <\infty \text{.}
\end{equation*}%
Then, based on dominated convergence theorem, we have%
\begin{equation*}
\lim_{\left\vert D\right\vert \rightarrow 0}E\left( \left(
\sum_{k=0}^{n-1}\left\Vert \left\langle Z\right\rangle ^{D}\right\Vert _{%
\frac{p}{2}-var,\left[ t_{k},t_{k+1}\right] }^{\frac{p}{2}}\right) ^{\frac{q%
}{p}}\right) =0\text{.}
\end{equation*}
\end{proof}

\subsection{Rough path perturbed by martingale}

\subsubsection{Rough Path Underlying}

\begin{proof}[Proof of Theorem \protect\ref{Theorem general vector field for
Ito RDE}]
\label{Proof of Theorem general vector field for Ito RDE}Define $\omega
_{i}:\left\{ \left( s,t\right) |0\leq s\leq t\leq T\right\} \rightarrow 
\overline{%
\mathbb{R}
^{+}}$, $i=1,2$, as, for any $0\leq s\leq t\leq T$,%
\begin{eqnarray*}
\mathcal{\omega }_{1}\left( s,t\right) &:&=\left\Vert \gamma \right\Vert
_{p-var,\left[ s,t\right] }^{p}+\left\Vert \left\langle M\right\rangle
\right\Vert _{1-var,\left[ s,t\right] }^{\frac{p}{2}}\text{,} \\
\mathcal{\omega }_{2}\left( s,t\right) &:&=\left\Vert \gamma +S_{2}\left(
M\right) \right\Vert _{p-var,\left[ s,t\right] }^{p}\text{.}
\end{eqnarray*}%
Then $\mathcal{\omega }_{1}$ is deterministic and $\mathcal{\omega }%
_{1}\left( 0,T\right) <\infty $. Based on our assumption $\left( \ref%
{condition of integrability}\right) $ on p\pageref{condition of
integrability}, for some integer $n\geq 2$,%
\begin{equation}
E\left( \mathcal{\omega }_{2}\left( 0,T\right) ^{n}\right) <\infty \text{.}
\label{inner assumption on integrability of omega1}
\end{equation}%
Denote $\pi _{f}\left( s,\mathbf{\eta },\mathcal{I}_{2}\left( \gamma
,M\right) \right) $ as the solution to the rough differential equation: 
\begin{equation*}
dY=f\left( \pi _{1}\left( Y\right) \right) d\mathcal{I}_{2}\left( \gamma
,M\right) \text{, \ }y_{s}=\mathbf{\eta }\in T^{\left( n\right) }\left( 
\mathbb{R}
^{e}\right) \text{.}
\end{equation*}

For the selected integer $n\geq 2$ and finite partition $D=\left\{
t_{j}\right\} $ of $\left[ 0,T\right] $, denote $y^{n,D}:=y^{n,D}\left(
\gamma ,M\right) $ (defined at $\left( \ref{Definition of y_n^D}\right) $ on
p\pageref{Definition of y_n^D}) and denote $y_{s,t}^{n,D}:=\left(
y_{s}^{n,D}\right) ^{-1}\otimes y_{t}^{n,D}$ for $0\leq s\leq t\leq T$.
Recall $\left\{ y^{i,j}\right\} _{i=1,2}$ defined at $\left( \ref{Definition
of y^(i,k)}\right) $ on p\pageref{Definition of y^(i,k)}:%
\begin{eqnarray*}
dy_{u}^{1,j} &=&f\left( \pi _{1}\left( y_{u}^{1,j}\right) \right) d\gamma
_{u}\text{, \ }y_{t_{j}}^{1,j}=y^{n,D}\left( \gamma ,M\right) _{t_{j}}\in
T^{\left( n\right) }\left( 
\mathbb{R}
^{e}\right) \text{,} \\
dy_{u}^{2,j} &=&f\left( \pi _{1}\left( y_{u}^{2,j}\right) \right) d\left(
\gamma +S_{2}\left( M\right) \right) _{u}\text{, \ }y_{t_{j}}^{2,j}=y^{n,D}%
\left( \gamma ,M\right) _{t_{j}}\in T^{\left( n\right) }\left( 
\mathbb{R}
^{e}\right) \text{,}
\end{eqnarray*}%
Then, based on the definition of $y^{n,D}$, we have%
\begin{equation}
y_{t_{j},t_{j+1}}^{n,D}=y_{t_{j},t_{j+1}}^{1,j}\otimes E\left(
y_{t_{j},t_{j+1}}^{2,j}\right) ^{-1}\otimes y_{t_{j},t_{j+1}}^{1,j}\text{, \ 
}j\geq 0\text{.}  \label{inner definition of ynD on small interval}
\end{equation}%
Since $f$ is $Lip\left( \beta \right) $ for $\beta >p\geq 2$, $f$ is $%
Lip\left( 2\right) $. Based on Euler estimate of solution of RDE ($\left( %
\ref{taylor estimate of the first level of solution of RDE}\right) $ in
Theorem \ref{Theorem estimate of the solution of RDE}), we have, on any $%
\left[ t_{j},t_{j+1}\right] $,%
\begin{eqnarray*}
&&\left\vert \pi _{1}\left( y_{t_{j},t_{j+1}}^{n,D}\right) -\pi _{1}\left(
\pi _{f}\left( t_{j},y_{t_{j}}^{n,D},\mathcal{I}_{2}\left( \gamma ,M\right)
\right) _{t_{j},t_{j+1}}\right) \right\vert \\
&\leq &C_{p,f}\left( E\left( \mathcal{\omega }_{2}\left(
t_{j},t_{j+1}\right) ^{\frac{3}{p}}\right) +\mathcal{\omega }_{1}\left(
t_{j},t_{j+1}\right) ^{\frac{3}{p}}\right) \\
&&+\left\vert \left( Dff\right) \left( \pi _{1}\left( Y_{t_{j}}\right)
\right) \left( E\left( \int_{t_{j}}^{t_{j+1}}\left( M_{u}-M_{t_{j}}\right)
\otimes \circ dM_{u}\right) -\frac{1}{2}\left\langle M\right\rangle
_{t_{j},t_{j+1}}\right) \right\vert \text{.}
\end{eqnarray*}%
Since $M$ is in $L^{2}$,%
\begin{equation*}
E\left( \int_{t_{j}}^{t_{j+1}}\left( M_{u}-M_{t_{j}}\right) \otimes \circ
dM_{u}-\frac{1}{2}\left\langle M\right\rangle _{t_{j},t_{j+1}}\right)
=E\left( \int_{t_{j}}^{t_{j+1}}\left( M_{u}-M_{t_{j}}\right) \otimes
dM_{u}\right) =0\text{,}
\end{equation*}%
and we have ($M=\int \phi dB$ with $\phi $ a fixed path taking value in $%
d\times d$\ matrices) 
\begin{equation*}
E\left( \int_{t_{j}}^{t_{j+1}}\left( M_{u}-M_{t_{j}}\right) \otimes \circ
dM_{u}\right) =\frac{1}{2}E\left( \left\langle M\right\rangle
_{t_{j},t_{j+1}}\right) =\frac{1}{2}\left\langle M\right\rangle
_{t_{j},t_{j+1}}\text{.}
\end{equation*}%
Thus, for any $t_{j}\in D$,%
\begin{eqnarray}
&&\left\vert \pi _{1}\left( y_{t_{j},t_{j+1}}^{n,D}\right) -\pi _{1}\left(
\pi _{f}\left( t_{j},y_{t_{j}}^{n,D},\mathcal{I}_{2}\left( \gamma ,M\right)
\right) _{t_{j},t_{j+1}}\right) \right\vert
\label{inner Euler scheme for level 1} \\
&\leq &C_{p,f}\left( E\left( \mathcal{\omega }_{2}\left(
t_{j},t_{j+1}\right) ^{\frac{3}{p}}\right) +\mathcal{\omega }_{1}\left(
t_{j},t_{j+1}\right) ^{\frac{3}{p}}\right) \text{.}  \notag
\end{eqnarray}%
For the second level, based on $\left( \ref{inner definition of ynD on small
interval}\right) $, we have 
\begin{eqnarray}
\pi _{2}\left( y_{t_{j},t_{j+1}}^{n,D}\right) &=&\pi _{2}\left(
y_{t_{j},t_{j+1}}^{1,j}\otimes E\left( y_{t_{j},t_{j+1}}^{2,j}\right)
^{-1}\otimes y_{t_{j},t_{j+1}}^{1,j}\right)  \label{inner expression level 2}
\\
&=&2\pi _{2}\left( y_{t_{j},t_{j+1}}^{1,j}\right) -\pi _{2}\left( E\left(
y_{t_{j},t_{j+1}}^{2,j}\right) \right) +\left( \pi _{1}\left(
y_{t_{j},t_{j+1}}^{1,j}\right) -\pi _{1}\left( E\left(
y_{t_{j},t_{j+1}}^{2,j}\right) \right) \right) ^{\otimes 2}\text{.}  \notag
\end{eqnarray}%
Then, by using $\left( \ref{inner expression level 2}\right) $, combined
with $\left( \ref{taylor estimate of the second level of the solution of RDE}%
\right) $ in Theorem \ref{Theorem estimate of the solution of RDE}, we get,
(denote $\xi _{j}:=\pi _{1}\left( y^{D}\left( \gamma ,M\right)
_{t_{j}}\right) $)%
\begin{eqnarray}
&&\left\vert \pi _{2}\left( y_{t_{j},t_{j+1}}^{n,D}\right) -f\left( \xi
_{j}\right) \otimes f\left( \xi _{j}\right) \left( 2\pi _{2}\left( \gamma
_{t_{j},t_{j+1}}\right) -\left( \pi _{2}\left( \gamma
_{t_{j},t_{j+1}}\right) +\frac{1}{2}\left\langle M\right\rangle
_{t_{j},t_{j+1}}\right) \right) \right\vert
\label{inner estimate of second level of ito increment} \\
&\leq &C_{p,f}\left( E\left( \mathcal{\omega }_{2}\left(
t_{j},t_{j+1}\right) ^{\frac{3}{p}}\vee \mathcal{\omega }_{2}\left(
t_{j},t_{j+1}\right) ^{2}\right) +\mathcal{\omega }_{1}\left(
t_{j},t_{j+1}\right) ^{\frac{3}{p}}\vee \mathcal{\omega }_{1}\left(
t_{j},t_{j+1}\right) ^{2}\right)  \notag \\
&&+\left\vert \frac{1}{2}\left( Dff\right) \left( \xi _{j}\right)
\left\langle M\right\rangle _{t_{j},t_{j+1}}\right\vert ^{2}+C_{p,f}\left(
E\left( \mathcal{\omega }_{2}\left( t_{j},t_{j+1}\right) ^{\frac{3}{p}%
}\right) +\mathcal{\omega }_{1}\left( t_{j},t_{j+1}\right) ^{\frac{3}{p}%
}\right) ^{2}  \notag \\
&&+C_{p,f}\left( E\left( \mathcal{\omega }_{2}\left( t_{j},t_{j+1}\right) ^{%
\frac{3}{p}}\right) +\mathcal{\omega }_{1}\left( t_{j},t_{j+1}\right) ^{%
\frac{3}{p}}\right)  \notag \\
&&\times \left( E\left( \mathcal{\omega }_{2}\left( t_{j},t_{j+1}\right) ^{%
\frac{1}{p}}\vee \mathcal{\omega }_{2}\left( t_{j},t_{j+1}\right) \right) +%
\mathcal{\omega }_{1}\left( t_{j},t_{j+1}\right) ^{\frac{1}{p}}\vee \mathcal{%
\omega }_{1}\left( t_{j},t_{j+1}\right) \right) \text{.}  \notag
\end{eqnarray}%
On the other hand, again based on $\left( \ref{taylor estimate of the second
level of the solution of RDE}\right) $ in Theorem \ref{Theorem estimate of
the solution of RDE}, we have, 
\begin{eqnarray}
&&\left\vert \pi _{2}\left( \pi _{f}\left( t_{j},y_{t_{j}}^{n,D},\mathcal{I}%
_{2}\left( \gamma ,M\right) \right) _{t_{j},t_{j+1}}\right) -f\left( \xi
_{j}\right) \otimes f\left( \xi _{j}\right) \left( \pi _{2}\left( \gamma
_{t_{j},t_{j+1}}\right) -\frac{1}{2}\left\langle M\right\rangle
_{t_{j},t_{j+1}}\right) \right\vert
\label{inner estimate of second level of Ito solution} \\
&\leq &C_{p,f}\text{ }\mathcal{\omega }_{1}\left( t_{j},t_{j+1}\right) ^{%
\frac{3}{p}}\vee \mathcal{\omega }_{1}\left( t_{j},t_{j+1}\right) ^{2}\text{.%
}  \notag
\end{eqnarray}%
Therefore, combining $\left( \ref{inner estimate of second level of ito
increment}\right) $ and $\left( \ref{inner estimate of second level of Ito
solution}\right) $, we get,%
\begin{eqnarray}
&&\left\vert \pi _{2}\left( y_{t_{j},t_{j+1}}^{n,D}\right) -\pi _{2}\left(
\pi _{f}\left( t_{j},y_{t_{j}}^{n,D},\mathcal{I}_{2}\left( \gamma ,M\right)
\right) _{t_{j},t_{j+1}}\right) \right\vert
\label{inner Euler scheme for level 2} \\
&\leq &C\left( p,f,E\left( \mathcal{\omega }_{2}\left( 0,T\right)
^{2}\right) ,\mathcal{\omega }_{1}\left( 0,T\right) \right)  \notag \\
&&\times \left( E\left( \mathcal{\omega }_{2}\left( t_{j},t_{j+1}\right) ^{%
\frac{3}{p}}\vee \mathcal{\omega }_{2}\left( t_{j},t_{j+1}\right)
^{2}\right) +\mathcal{\omega }_{1}\left( t_{j},t_{j+1}\right) ^{\frac{3}{p}%
}\vee \mathcal{\omega }_{1}\left( t_{j},t_{j+1}\right) ^{2}\right) \text{.} 
\notag
\end{eqnarray}%
For the higher levels (i.e. $k\geq 3$), by using $\left( \ref{estimate of
solution of RDE}\right) $ in Theorem \ref{Theorem estimate of the solution
of RDE} and Young's inequality, we have%
\begin{eqnarray}
&&\left\vert \pi _{k}\left( y_{t_{j},t_{j+1}}^{n,D}\right) -\pi _{k}\left(
\pi _{f}\left( t_{j},y_{t_{j}}^{n,D},\mathcal{I}_{2}\left( \gamma ,M\right)
\right) _{t_{j},t_{j+1}}\right) \right\vert
\label{inner Euler scheme for higher levels} \\
&\leq &C\left( p,f,k\right) \left( E\left( \mathcal{\omega }_{2}\left(
t_{j},t_{j+1}\right) ^{\frac{k}{p}}\vee \mathcal{\omega }_{2}\left(
t_{j},t_{j+1}\right) ^{k}\right) +\mathcal{\omega }_{1}\left(
t_{j},t_{j+1}\right) ^{\frac{k}{p}}\vee \mathcal{\omega }_{1}\left(
t_{j},t_{j+1}\right) ^{k}\right) \text{.}  \notag
\end{eqnarray}%
Combine $\left( \ref{inner Euler scheme for level 1}\right) $, $\left( \ref%
{inner Euler scheme for level 2}\right) $ and $\left( \ref{inner Euler
scheme for higher levels}\right) $, if we define $\widetilde{\omega }%
_{k}:\left\{ \left( s,t\right) |0\leq s\leq t\leq T\right\} \rightarrow 
\overline{%
\mathbb{R}
^{+}}$ as%
\begin{equation}
\widetilde{\omega }_{k}\left( s,t\right) :=\left\{ 
\begin{array}{cc}
E\left( \mathcal{\omega }_{2}\left( s,t\right) ^{\frac{3}{p}}\right) +%
\mathcal{\omega }_{1}\left( s,t\right) ^{\frac{3}{p}}, & k=1 \\ 
E\left( \mathcal{\omega }_{2}\left( s,t\right) ^{\frac{3}{p}}\vee \mathcal{%
\omega }_{2}\left( s,t\right) ^{2}\right) +\mathcal{\omega }_{1}\left(
s,t\right) ^{\frac{3}{p}}\vee \mathcal{\omega }_{1}\left( s,t\right) ^{2}, & 
k=2 \\ 
E\left( \mathcal{\omega }_{2}\left( s,t\right) ^{\frac{k}{p}}\vee \mathcal{%
\omega }_{2}\left( s,t\right) ^{k}\right) +\mathcal{\omega }_{1}\left(
s,t\right) ^{\frac{k}{p}}\vee \mathcal{\omega }_{1}\left( s,t\right) ^{k}, & 
k\geq 3%
\end{array}%
\right. \text{,}  \label{Definition of omega tilde}
\end{equation}%
then%
\begin{eqnarray}
&&\left\vert \pi _{k}\left( y_{t_{j},t_{j+1}}^{n,D}\right) -\pi _{k}\left(
\pi _{f}\left( t_{j},y_{t_{j}}^{n,D},\mathcal{I}_{2}\left( \gamma ,M\right)
\right) _{t_{j},t_{j+1}}\right) \right\vert
\label{inner estimation of error on small interval} \\
&\leq &C\left( p,f,k,E\left( \mathcal{\omega }_{2}\left( 0,T\right)
^{2}\right) ,\mathcal{\omega }_{1}\left( 0,T\right) \right) \widetilde{%
\omega }_{k}\left( t_{j},t_{j+1}\right) \text{, }\forall j\geq 0\text{, }%
k=1,\dots ,n\text{.}  \notag
\end{eqnarray}%
Based on our assumption $\left( \ref{inner assumption on integrability of
omega1}\right) $ and that $n\geq 2$, we have 
\begin{equation}
\lim_{\left\vert D\right\vert \rightarrow 0}\sum_{t_{j}\in D}\widetilde{%
\omega }_{k}\left( t_{j},t_{j+1}\right) =0\text{, \ }k=1,\dots ,n\text{.}
\label{inner omega tilde tends to zero}
\end{equation}

Since $f$ is $Lip\left( \beta \right) $ for $\beta >p$, denote $Y$ as the
unique solution to the rough differential equation%
\begin{equation*}
dY=f\left( \pi _{1}\left( Y\right) \right) d\mathcal{I}_{2}\left( \gamma
,M\right) \text{, \ }Y_{0}=\mathbf{\xi }\in T^{\left( n\right) }\left( 
\mathbb{R}
^{e}\right) .
\end{equation*}%
We want to prove%
\begin{equation}
\lim_{\left\vert D\right\vert \rightarrow 0}\max_{1\leq k\leq n}\sup_{0\leq
t\leq T}\left\vert \pi _{k}\left( y_{t}^{n,D}\right) -\pi _{k}\left(
Y_{t}\right) \right\vert =0\text{.}  \label{inner goal of proof}
\end{equation}%
It is clear that 
\begin{equation*}
\pi _{0}\left( y_{t}^{n,D}\right) =\pi _{0}\left( Y_{t}\right) \equiv 1\text{%
,}
\end{equation*}%
so $\left( \ref{inner goal of proof}\right) $ holds trivially at level $0$.
Then we use mathematical induction. For integer $k=1,2,\dots ,n$, suppose $%
\left( \ref{inner goal of proof}\right) $ holds for level $l=0,1,\dots ,k-1$%
, we want to prove $\left( \ref{inner goal of proof}\right) $ at level $k$.
Based on our inductive hypothesis, we have%
\begin{equation}
\sup_{D\subset \left[ 0,T\right] }\max_{0\leq l\leq k-1}\sup_{0\leq t\leq
T}\left\vert \pi _{l}\left( y_{t}^{n,D}\right) \right\vert <\infty \text{.}
\label{inner bound}
\end{equation}%
For $t_{j}\in D$, when $j=0$, $y_{0}^{n,D}=Y_{0}=\mathbf{\xi }$. When $j=1$,
based on $\left( \ref{inner estimation of error on small interval}\right) $,
we have%
\begin{eqnarray*}
\left\vert \pi _{k}\left( y_{t_{1}}^{n,D}-Y_{t_{1}}\right) \right\vert
&=&\sum_{l=0}^{k-1}\left\vert \pi _{l}\left( \mathbf{\xi }\right)
\right\vert \left\vert \pi _{k-l}\left( y_{0,t_{1}}^{D}-Y_{0,t_{1}}\right)
\right\vert \\
&\leq &C\left( p,f,k,E\left( \mathcal{\omega }_{2}\left( 0,T\right)
^{2}\right) ,\mathcal{\omega }_{1}\left( 0,T\right) \right) \max_{0\leq
l\leq k-1}\left\vert \pi _{l}\left( \mathbf{\xi }\right) \right\vert
\sum_{l=1}^{k}\widetilde{\omega }_{l}\left( 0,t_{1}\right) \text{.}
\end{eqnarray*}%
When $j\geq 2$, we have%
\begin{eqnarray}
&&\left\vert \pi _{k}\left( y_{t_{j}}^{n,D}-Y_{t_{j}}\right) \right\vert
\label{inner uniform estimation} \\
&=&\sum_{i=0}^{j-1}\left\vert \pi _{k}\left( \pi _{f}\left(
t_{i+1},y_{t_{i+1}}^{n,D},\mathcal{I}_{2}\left( \gamma ,M\right) \right)
_{t_{j}}-\pi _{f}\left( t_{i},y_{t_{i}}^{n,D},\mathcal{I}_{2}\left( \gamma
,M\right) \right) _{t_{j}}\right) \right\vert  \notag \\
&\leq &\sum_{i=0}^{j-2}\left\vert \pi _{k}\left( \pi _{f}\left(
t_{i+1},y_{t_{i+1}}^{n,D},\mathcal{I}_{2}\left( \gamma ,M\right) \right)
_{t_{j}}-\pi _{f}\left( t_{i+1},\pi _{f}\left( t_{i},y_{t_{i}}^{n,D},%
\mathcal{I}_{2}\left( \gamma ,M\right) \right) _{t_{i+1}},\mathcal{I}%
_{2}\left( \gamma ,M\right) \right) _{t_{j}}\right) \right\vert  \notag \\
&&+\left\vert \pi _{k}\left( y_{t_{j}}^{n,D}-\pi _{f}\left(
t_{j-1},y_{t_{j-1}}^{n,D},\mathcal{I}_{2}\left( \gamma ,M\right) \right)
_{t_{j}}\right) \right\vert .  \notag
\end{eqnarray}%
Then for each $i=0,1,\dots ,j-2$, 
\begin{eqnarray*}
&&\pi _{f}\left( t_{i+1},y_{t_{i+1}}^{n,D},\mathcal{I}_{2}\left( \gamma
,M\right) \right) _{t_{j}}-\pi _{f}\left( t_{i+1},\pi _{f}\left(
t_{i},y_{t_{i}}^{n,D},\mathcal{I}_{2}\left( \gamma ,M\right) \right)
_{t_{i+1}},\mathcal{I}_{2}\left( \gamma ,M\right) \right) _{t_{j}} \\
&=&y_{t_{i+1}}^{n,D}\otimes \pi _{f}\left( t_{i+1},y_{t_{i+1}}^{n,D},%
\mathcal{I}_{2}\left( \gamma ,M\right) \right) _{t_{i+1},t_{j}} \\
&&-\pi _{f}\left( t_{i},y_{t_{i}}^{n,D},\mathcal{I}_{2}\left( \gamma
,M\right) \right) _{t_{i+1}}\otimes \pi _{f}\left( t_{i+1},\pi _{f}\left(
t_{i},y_{t_{i}}^{n,D},\mathcal{I}_{2}\left( \gamma ,M\right) \right)
_{t_{i+1}},\mathcal{I}_{2}\left( \gamma ,M\right) \right) _{t_{i+1},t_{j}} \\
&=&y_{t_{i+1}}^{n,D}\otimes \left( \pi _{f}\left( t_{i+1},y_{t_{i+1}}^{n,D},%
\mathcal{I}_{2}\left( \gamma ,M\right) \right) _{t_{i+1},t_{j}}-\pi
_{f}\left( t_{i+1},\pi _{f}\left( t_{i},y_{t_{i}}^{n,D},\mathcal{I}%
_{2}\left( \gamma ,M\right) \right) _{t_{i+1}},\mathcal{I}_{2}\left( \gamma
,M\right) \right) _{t_{i+1},t_{j}}\right) \\
&&+y_{t_{i}}^{n,D}\otimes \left( y_{t_{i},t_{i+1}}^{n,D}-\pi _{f}\left(
t_{i},y_{t_{i}}^{n,D},\mathcal{I}_{2}\left( \gamma ,M\right) \right)
_{t_{i},t_{i+1}}\right) \otimes \pi _{f}\left( t_{i+1},\pi _{f}\left(
t_{i},y_{t_{i}}^{n,D},\mathcal{I}_{2}\left( \gamma ,M\right) \right)
_{t_{i+1}},\mathcal{I}_{2}\left( \gamma ,M\right) \right) _{t_{i+1},t_{j}}%
\text{.}
\end{eqnarray*}%
Then use $\left( \ref{inner bound}\right) $, Theorem \ref{Theorem continuity
in initial value} on p\pageref{Theorem continuity in initial value} and $%
\left( \ref{inner estimation of error on small interval}\right) $ ($%
\widetilde{\omega }_{1}$ defined at $\left( \ref{Definition of omega tilde}%
\right) $), we have%
\begin{eqnarray*}
&&\left\vert \pi _{k}\left( y_{t_{i+1}}^{n,D}\otimes \left( \pi _{f}\left(
t_{i+1},y_{t_{i+1}}^{n,D},\mathcal{I}_{2}\left( \gamma ,M\right) \right)
_{t_{i+1},t_{j}}-\pi _{f}\left( t_{i+1},\pi _{f}\left( t_{i},y_{t_{i}}^{n,D},%
\mathcal{I}_{2}\left( \gamma ,M\right) \right) _{t_{i+1}},\mathcal{I}%
_{2}\left( \gamma ,M\right) \right) _{t_{i+1},t_{j}}\right) \right)
\right\vert \\
&\leq &\left( \sup_{D\subset \left[ 0,T\right] }\max_{0\leq l\leq
k-1}\sup_{0\leq t\leq T}\left\vert \pi _{l}\left( y_{t}^{n,D}\right)
\right\vert \right) \\
&&\times \left( \sum_{l=1}^{k}\left\vert \pi _{l}\left( \pi _{f}\left(
t_{i+1},y_{t_{i+1}}^{n,D},\mathcal{I}_{2}\left( \gamma ,M\right) \right)
_{t_{i+1},t_{j}}-\pi _{f}\left( t_{i+1},\pi _{f}\left( t_{i},y_{t_{i}}^{n,D},%
\mathcal{I}_{2}\left( \gamma ,M\right) \right) _{t_{i+1}},\mathcal{I}%
_{2}\left( \gamma ,M\right) \right) _{t_{i+1},t_{j}}\right) \right\vert
\right) \\
&\leq &C\left( p,\beta ,f,k,\mathcal{\omega }_{1}\left( 0,T\right) \right)
\left( \sup_{D\subset \left[ 0,T\right] }\max_{0\leq l\leq k-1}\sup_{0\leq
t\leq T}\left\vert \pi _{l}\left( y_{t}^{n,D}\right) \right\vert \right)
\left\vert \pi _{1}\left( y_{t_{i},t_{i+1}}^{n,D}-\pi _{f}\left(
t_{i},y_{t_{i}}^{n,D},\mathcal{I}_{2}\left( \gamma ,M\right) \right)
_{t_{i},t_{i+1}}\right) \right\vert \\
&\leq &C\left( p,\beta ,f,k,E\left( \mathcal{\omega }_{2}\left( 0,T\right)
^{2}\right) ,\mathcal{\omega }_{1}\left( 0,T\right) \right) \left(
\sup_{D}\max_{0\leq l\leq k-1}\sup_{0\leq t\leq T}\left\vert \pi _{l}\left(
y_{t}^{n,D}\right) \right\vert \right) \widetilde{\omega }_{1}\left(
t_{i},t_{i+1}\right)
\end{eqnarray*}%
On the other hand, use $\left( \ref{inner bound}\right) $, $\left( \ref%
{estimate of solution of RDE}\right) $ in Theorem \ref{Theorem estimate of
the solution of RDE} on p\pageref{Theorem estimate of the solution of RDE}
and $\left( \ref{inner estimation of error on small interval}\right) $ ($%
\widetilde{\omega }_{l}$ defined at $\left( \ref{Definition of omega tilde}%
\right) $), we have 
\begin{eqnarray*}
&&\left\vert \pi _{k}\left( y_{t_{i}}^{n,D}\otimes \left(
y_{t_{i},t_{i+1}}^{n,D}-\pi _{f}\left( t_{i},y_{t_{i}}^{n,D},\mathcal{I}%
_{2}\left( \gamma ,M\right) \right) _{t_{i},t_{i+1}}\right) \otimes \pi
_{f}\left( t_{i+1},\pi _{f}\left( t_{i},y_{t_{i}}^{n,D},\mathcal{I}%
_{2}\left( \gamma ,M\right) \right) _{t_{i+1}},\mathcal{I}_{2}\left( \gamma
,M\right) \right) _{t_{i+1},t_{j}}\right) \right\vert \\
&\leq &C\left( p,\beta ,f,k,E\left( \mathcal{\omega }_{2}\left( 0,T\right)
^{2}\right) ,\mathcal{\omega }_{1}\left( 0,T\right) \right) \left(
\sup_{D\subset \left[ 0,T\right] }\max_{0\leq l\leq k-1}\sup_{0\leq t\leq
T}\left\vert \pi _{l}\left( y_{t}^{n,D}\right) \right\vert \right)
\sum_{l=1}^{k}\widetilde{\omega }_{l}\left( t_{i},t_{i+1}\right) \text{.}
\end{eqnarray*}%
Therefore, we have, for any $i=0,1,\dots ,j-2$,%
\begin{eqnarray*}
&&\left\vert \pi _{k}\left( \pi _{f}\left( t_{i+1},y_{t_{i+1}}^{n,D},%
\mathcal{I}_{2}\left( \gamma ,M\right) \right) _{t_{j}}-\pi _{f}\left(
t_{i+1},\pi _{f}\left( t_{i},y_{t_{i}}^{n,D},\mathcal{I}_{2}\left( \gamma
,M\right) \right) _{t_{i+1}},\mathcal{I}_{2}\left( \gamma ,M\right) \right)
_{t_{j}}\right) \right\vert \\
&\leq &C\left( p,\beta ,f,k,E\left( \mathcal{\omega }_{2}\left( 0,T\right)
^{2}\right) ,\mathcal{\omega }_{1}\left( 0,T\right) \right) \left(
\sup_{D\subset \left[ 0,T\right] }\max_{0\leq l\leq k-1}\sup_{0\leq t\leq
T}\left\vert \pi _{l}\left( y_{t}^{n,D}\right) \right\vert \right)
\sum_{l=1}^{k}\widetilde{\omega }_{l}\left( t_{i},t_{i+1}\right) \text{.}
\end{eqnarray*}%
As a result,%
\begin{eqnarray}
&&\sum_{i=0}^{j-2}\left\vert \pi _{k}\left( \pi _{f}\left(
t_{i+1},y_{t_{i+1}}^{n,D},\mathcal{I}_{2}\left( \gamma ,M\right) \right)
_{t_{j}}-\pi _{f}\left( t_{i},y_{t_{i}}^{n,D},\mathcal{I}_{2}\left( \gamma
,M\right) \right) _{t_{j}}\right) \right\vert
\label{inner estimation for the first j-1 intervals} \\
&\leq &C\left( p,\beta ,f,k,E\left( \mathcal{\omega }_{2}\left( 0,T\right)
^{2}\right) ,\mathcal{\omega }_{1}\left( 0,T\right) \right) \left(
\sup_{D\subset \left[ 0,T\right] }\max_{0\leq l\leq k-1}\sup_{0\leq t\leq
T}\left\vert \pi _{l}\left( y_{t}^{n,D}\right) \right\vert \right)
\sum_{i=0}^{j-2}\left( \sum_{l=1}^{k}\widetilde{\omega }_{l}\left(
t_{i},t_{i+1}\right) \right) \text{.}  \notag
\end{eqnarray}%
On the other hand, for the term left in $\left( \ref{inner uniform
estimation}\right) $,%
\begin{eqnarray}
&&\left\vert \pi _{k}\left( y_{t_{j}}^{n,D}-\pi _{f}\left(
t_{j-1},y_{t_{j-1}}^{n,D},\mathcal{I}_{2}\left( \gamma ,M\right) \right)
_{t_{j}}\right) \right\vert  \label{inner estimation for the last interval}
\\
&=&\left\vert \pi _{k}\left( y_{t_{j-1}}^{n,D}\otimes \left(
y_{t_{j-1},t_{j}}^{n,D}-\pi _{f}\left( t_{j-1},y_{t_{j-1}}^{n,D},\mathcal{I}%
_{2}\left( \gamma ,M\right) \right) _{t_{j-1},t_{j}}\right) \right)
\right\vert  \notag \\
&\leq &C\left( p,\beta ,f,k,E\left( \mathcal{\omega }_{2}\left( 0,T\right)
^{2}\right) ,\mathcal{\omega }_{1}\left( 0,T\right) \right) \left(
\sup_{D\subset \left[ 0,T\right] }\max_{0\leq l\leq k-1}\sup_{0\leq t\leq
T}\left\vert \pi _{l}\left( y_{t}^{n,D}\right) \right\vert \right)
\sum_{l=1}^{k}\widetilde{\omega }_{l}\left( t_{j-1},t_{j}\right) \text{.} 
\notag
\end{eqnarray}%
Therefore, combining $\left( \ref{inner uniform estimation}\right) $, $%
\left( \ref{inner estimation for the first j-1 intervals}\right) $ and $%
\left( \ref{inner estimation for the last interval}\right) $, we have 
\begin{eqnarray*}
\left\vert \pi _{k}\left( y_{t_{j}}^{n,D}-Y_{t_{j}}\right) \right\vert &\leq
&C\left( p,\beta ,f,k,E\left( \mathcal{\omega }_{2}\left( 0,T\right)
^{2}\right) ,\mathcal{\omega }_{1}\left( 0,T\right) \right) \\
&&\times \left( \sup_{D\subset \left[ 0,T\right] }\max_{0\leq l\leq
k-1}\sup_{0\leq t\leq T}\left\vert \pi _{l}\left( y_{t}^{n,D}\right)
\right\vert \right) \sum_{i=0}^{j-1}\left( \sum_{l=1}^{k}\widetilde{\omega }%
_{l}\left( t_{i},t_{i+1}\right) \right)
\end{eqnarray*}%
Then, based on $\left( \ref{inner omega tilde tends to zero}\right) $ and
the inductive assumption $\left( \ref{inner bound}\right) $, we have 
\begin{equation}
\lim_{\left\vert D\right\vert \rightarrow 0}\max_{t_{j}\in D}\left\vert \pi
_{k}\left( y_{t_{j}}^{n,D}\right) -\pi _{k}\left( Y_{t_{j}}\right)
\right\vert =0\text{.}  \label{inner estimates convergence in prob}
\end{equation}

Since $y^{n,D}$ is piecewise-constant, we have%
\begin{equation}
\sup_{0\leq t\leq T}\left\vert \pi _{k}\left( y_{t}^{n,D}\right) -\pi
_{k}\left( Y_{t}\right) \right\vert \leq \max_{t_{j}\in D}\left\vert \pi
_{k}\left( y_{t_{j}}^{n,D}\right) -\pi _{k}\left( Y_{t_{j}}\right)
\right\vert +\sup_{\left\vert t-s\right\vert \leq \left\vert D\right\vert
}\left\vert \pi _{k}\left( Y_{t}\right) -\pi _{k}\left( Y_{s}\right)
\right\vert \text{.}  \label{inner estimates convergence in prob2}
\end{equation}%
For interval $\left[ s,t\right] $ satisfying $\left\Vert Y\right\Vert
_{p-var,\left[ s,t\right] }\leq 1$, we have,%
\begin{equation*}
\left\vert \pi _{k}\left( Y_{t}\right) -\pi _{k}\left( Y_{s}\right)
\right\vert =\left\vert \sum_{j=1}^{k}\pi _{k-j}\left( Y_{s}\right) \otimes
\pi _{j}\left( Y_{s,t}\right) \right\vert \leq C\left( k,\sup_{t\in \left[
0,T\right] }\left\Vert Y_{t}\right\Vert \right) \left\Vert Y\right\Vert
_{p-var,\left[ s,t\right] }.
\end{equation*}%
Since $Y$ is continuous and $\left\Vert Y\right\Vert _{p-var,\left[ 0,T%
\right] }<\infty $, we have%
\begin{equation*}
\lim_{\left\vert D\right\vert \rightarrow 0}\sup_{\left\vert t-s\right\vert
\leq \left\vert D\right\vert }\left\vert \pi _{k}\left( Y_{t}\right) -\pi
_{k}\left( Y_{s}\right) \right\vert =0\text{.}
\end{equation*}%
Combined with $\left( \ref{inner estimates convergence in prob}\right) $ and 
$\left( \ref{inner estimates convergence in prob2}\right) $, we get%
\begin{equation*}
\lim_{\left\vert D\right\vert \rightarrow 0,D\subset \left[ 0,T\right]
}\sup_{0\leq t\leq T}\left\vert \pi _{k}\left( y_{t}^{n,D}\right) -\pi
_{k}\left( Y_{t}\right) \right\vert =0\text{.}
\end{equation*}
\end{proof}

\subsubsection{Martingale underlying}

\begin{proof}[Proof of Corollary \protect\ref{Corollary for sample paths of
martingale}]
\label{Proof of Corollary of martingale underlying}$(Z,\widetilde{Z})$ is a $%
2d$-dimensional continuous martingale w.r.t. the filtration generated by $Z$
and $B$, so can be enhanced (by their Stratonovich integrals) to a $p$-rough
process for any $p>2$ (Theorem \ref{Theorem regulartiy of Stratonovich
signature of M} on p\pageref{Theorem regulartiy of Stratonovich signature of
M}). Suppose $Z$ is in $L^{2n+\epsilon }$ for some $\epsilon >0$ and integer 
$n\geq 1$. Using inequality $\left( \ref{BDG and Doob}\right) $ (on page %
\pageref{BDG and Doob}), we get (let $p:=2+n^{-1}\epsilon $) 
\begin{eqnarray*}
E\left( \left\Vert S_{2}\left( Z+\widetilde{Z}\right) \right\Vert _{p-var,%
\left[ 0,T\right] }^{np}\right) &\leq &C_{d,p,n}E\left( \left\Vert
\left\langle Z+\widetilde{Z}\right\rangle \right\Vert _{\infty -var,\left[
0,T\right] }^{2^{-1}np}\right) \\
&\leq &C_{d,p,n}E\left( \left\Vert \left\langle Z\right\rangle \right\Vert
_{\infty -var,\left[ 0,T\right] }^{2^{-1}np}\right) \\
&\leq &C_{d,p,n}E\left( \left\vert Z_{T}-Z_{0}\right\vert ^{np}\right)
=C_{d,p,n}E\left( \left\vert Z_{T}-Z_{0}\right\vert ^{2n+\epsilon }\right)
<\infty \text{.}
\end{eqnarray*}%
Thus, we have 
\begin{equation*}
E\left( \left\Vert S_{2}\left( Z+\widetilde{Z}\right) \right\Vert _{p-var,%
\left[ 0,T\right] }^{np}|Z\right) <\infty \text{ a.s..}
\end{equation*}%
On the other hand, fix a sample path of $Z$, we have that, the Stratonovich
integrals satisfy:%
\begin{equation*}
E\left( \int_{s}^{t}Z_{s,u}\otimes \circ d\widetilde{Z}_{u}+\int_{s}^{t}%
\widetilde{Z}_{s,u}\otimes \circ dZ_{u}|Z\right) =0\text{, }\forall 0\leq
s\leq t\leq T\text{, a.s..}
\end{equation*}%
Thus, based on Theorem \ref{Theorem general vector field for Ito RDE},
Corollary holds. (When $n=1$, it holds based on Remark \ref{Remark
convergence of first level}.)
\end{proof}

\section{Acknowledgement}

The research of both authors are supported by European Research Council
under the European Union's Seventh Framework Programme (FP7-IDEAS-ERC)/ ERC
grant agreement nr. 291244. The research of Terry Lyons is supported by
EPSRC grant EP/H000100/1. The authors are grateful for the support of
Oxford-Man Institute.

\end{document}